\theoremstyle{plain}
\newtheorem{theorem}{Theorem}
\newtheorem{lemma}{Lemma}
\newtheorem{proposition}{Proposition}
\newtheorem{corollary}{Corollary}
\theoremstyle{definition}
\newtheorem{example}{Example}
\newtheorem{remark}{Remark}
\newtheorem{definition}{Definition}
\numberwithin{equation}{section}
\begin{document}

\begin{center}\Large
\textbf{Finite groups and $k$-submodular subgroups}
\normalsize

\bigskip
T. I. Vasilyeva

\end{center}
\bigskip

\begin{center}
{\bf Abstract}
\end{center}


{\small~~~

For natural numbers $n$ and $k$, the concepts of $n$-modularly embedded subgroup, $k$-submodular subgroup and $k$-$\mathrm{LM}$-group are given, which generalize, respectively, the concepts of modular subgroup, submodular subgroup and $\mathrm{LM}$-group. Classes of groups with given systems of $k$-submodular subgroups are investigated.

\medskip

{\bf Keywords:} 
$n$-modularly embedded subgroup, $k$-submodular subgroup, $k$-$\mathrm{LM}$-group,  supersoluble group, Schunck class, formation}

\medskip

\smallskip

MSC2010\  20D10, 20E15, 20F16



\bigskip


\section{Introduction}

All the groups under consideration are finite. The modular subgroup \cite{sch2}, as a modular element in the lattice of all subgroups of a group, is one of the generalizations of a normal subgroup.
A subgroup $M$ of a group $G$ is called {\sl modular} in $G$ if the following are provided that
(1)~$\langle X, M \cap Z \rangle = \langle X, M \rangle \cap Z$ for all $X \leq G$, $Z \leq G$ such that $X \leq Z$ and
(2)~$\langle M, Y \cap Z\rangle  = \langle M, Y \rangle  \cap Z$ for all $Y \leq G$, $Z \leq G$ with $M \leq Z$.

Modular, like normal subgroups, do not have the property of transitivity.  However, the concept of a submodular subgroup introduced in \cite{Zim} is free of this drawback. A subgroup $H$ of a group $G$ is called {\sl submodular} in $G$ if $H$ can be connected to $G$ by a chain of subgroups in which each previous subgroup is modular in the next. In \cite{Zim}, the properties of such subgroups were studied. In \cite{VVA2015}, classes of groups with submodular Sylow subgroups were investigated.
Currently, groups with given systems of submodular subgroups are studied
in the works of many authors (see, for example, [4--10]). 

R.~Schmidt in \cite{sch1}
proved that the subgroup $M$ of the group $G$ is a maximal modular subgroup in $G$ if and only if either $M$ is a maximal normal subgroup of $G$ or $G/\mathrm{Core}_{G}(M)$ is a non-abelian group of order $pq$ for some primes $p$ and $q$. Here $\mathrm{Core}_{G}(M)$ is the intersection of all subgroups of $G$ conjugate to $M$.
We introduce the following

\begin{definition}\label{d1}
Let $n$ be a natural number.
A subgroup $H$ of a group $G$ will be called $n$-modularly embedded in $G$ if either $H\unlhd G$ or $G/\mathrm{Core}_{G}(M)$ is a non-nilpotent group of order $pq^{n}$ with $|G:H| =p$ for some primes $p$ and $q$.
\end{definition}

\begin{definition}\label{d2}
Let $k$ be a fixed natural number. A subgroup $H$ of a group $G$ will be called $k$-submodular in $G$ if there exists a chain of subgroups
\begin{equation} \label{1_1}
		H = H_0 \leq H_1\leq \cdots \leq H_{m-1}\leq H_m=G,
\end{equation}
such that $H_{i-1}$ is $n_{i}$-modularly embedded in $H_{i}$ for some natural number $n_{i}\leq k$ and every $i = 1, \ldots , m$.
\end{definition}

\begin{example} \label{e3.1}
Let $n=2$ and let $G=\langle a, b$ $|$ $a^{5}=b^{4}=1, ab=ba^{3}\rangle$ be the holomorph of the cyclic group $X= \langle a\rangle$. The subgroup $Y=\langle b\rangle$ has $\mathrm{Core}_{G}(Y)=1$. Since $|G|=5\cdot2^2$ and $|G:Y|=5$ it follows that $Y$  is $2$-modularly embedded in $G$.

Let $k=2$. The subgroup $H=\langle b^2 \rangle$ is $2$-submodular in $G$ since there exists the chain of subgroups $H \unlhd Y < G$.
\end{example}

\begin{remark}\label{r1}
Note that the following statements follow from the above-mentioned result of R.~Schmidt and Definitions \ref{d1}, \ref{d2} in the case $k=1$.

(1) For a maximal subgroup $M$ of a group $G$, $M$ is $1$-submodular in $G$ if and only if $M$ is modular in $G$.

(2) A subgroup $H$ of $G$ is $1$-submodular in $G$ if and only if $H$ is submodular in $G$.
\end{remark}

In this article, properties of $k$-submodular subgroups of groups and classes of groups with given systems of $k$-submodular subgroups are obtained.

For a natural number  $n$, let us recall that a subgroup $M$ is called $n$-maximal subgroup of $G$ if $M$ is maximal in $(n-1)$-maximal subgroup $W$ of $G$ ($G$ is $0$-maximal subgroup of $G$).

\medskip

In what follows, $k$ is a fixed natural number.

\begin{theorem} \label{t3.1}
Let $G$ be a group. Then the following statements are equivalent.

$(1)$ Every maximal subgroup of $G$ is $k$-submodular in $G$.

$(2)$ $G$ is supersoluble and for all complemented chief factors $H/K$ of $G$, $G/C_{G}(H/K)$ is either $1$ or cyclic of order $q^{n}$ where $q$ is some prime and $n$ is some natural number such that $n\leq k$.

$(3)$ A group $G$ is soluble and for all maximal subgroups $M_{1}$ and $M_{2}$ of $G$ either $M_{1}= M_{2}$ or $M_{1}\cap M_{2}$ is a $n$-maximal subgroup of $M_{i}$ and $|M_{i}: M_{1}\cap M_{2}|=q_{i}^{n}$, $i=1, 2$, where $q_{1}$ and $q_{2}$ are some primes (not necessarily different) and $n$ is some natural number such that $n\leq k$.
\end{theorem}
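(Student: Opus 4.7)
The plan is to establish $(1)\Leftrightarrow(2)$ by reading Definition~\ref{d1} off the chief-factor description and $(2)\Leftrightarrow(3)$ by analysing intersections of maximal subgroups; the subtlest leg is the direction $(3)\Rightarrow(1)$. For $(1)\Rightarrow(2)$, a maximal $k$-submodular subgroup $M$ of $G$ is $n$-modularly embedded for some $n\le k$ (its chain in Definition~\ref{d2} has length one), so Definition~\ref{d1} forces $|G:M|$ prime whenever $M$ is non-normal. In the remaining normal case I rule out $G/M$ being non-abelian simple: the preimage of any maximal of $G/M$ would be a non-normal maximal of $G$ of prime index, whence $G/M$ would satisfy Huppert's criterion for supersolubility, contradicting non-abelian simplicity. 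Thus every maximal has prime index, and Huppert's theorem gives $G$ supersoluble. For a complemented chief factor $H/K$ (necessarily of prime order $p$) with maximal complement $M$ and $C=\mathrm{Core}_{G}(M)$, the identities $H\cap C=K$, $HC\cap M=C$ and $[C,H]\le K$ yield $C_{G}(H/K)=HC$, so $G/C_{G}(H/K)\cong M/C$; this is trivial when $M\unlhd G$, and otherwise Definition~\ref{d1} and the faithful embedding $M/C\hookrightarrow\mathrm{Aut}(H/K)\cong\mathbb{Z}_{p-1}$ make $M/C$ cyclic of order $q^{n}$ with $n\le k$.

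The converse $(2)\Rightarrow(1)$ reverses this: for a maximal $M$, supersolubility gives $|G:M|=p$ prime, and $G/C$ (with $C=\mathrm{Core}_{G}(M)$) is a primitive soluble group whose unique minimal normal subgroup $N/C$ has order $p$ and is complemented by $M/C$. Since $N/C$ is a complemented chief factor with complement $M$ and primitivity forces $C_{G}(N/C)=N$, (2) gives $G/N\cong G/C_{G}(N/C)$ either trivial or cyclic of order $q^{n}$, $n\le k$; hence either $M=C\unlhd G$ or $|G/C|=pq^{n}$ with the quotient non-nilpotent (thanks to the faithful non-trivial action of $M/C$ on $N/C$), and in either case $M$ is $n$-modularly embedded.

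For $(2)\Rightarrow(3)$, supersolubility makes $G$ soluble with every maximal of prime index; for distinct maximals $M_{1},M_{2}$, a case analysis on normality, using the primitive quotient $G/\mathrm{Core}_{G}(M_{i})$ in the non-normal case (where $M_{i}/C_{i}$ is cyclic of prime-power order acting faithfully on the order-$p_{i}$ chief factor $N_{i}/C_{i}$), yields $|M_{i}:M_{1}\cap M_{2}|=q_{i}^{n}$ with a common exponent $n\le k$ and $M_{1}\cap M_{2}$ an $n$-maximal subgroup of $M_{i}$. The converse is proved via $(3)\Rightarrow(1)$: for a non-normal maximal $M$ of $G$ with $C=\mathrm{Core}_{G}(M)$, $G/C$ is primitive with unique minimal normal subgroup $N/C$ elementary abelian of order $p^{r}$ and complement $M/C$. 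To force $r=1$ I introduce an auxiliary normal maximal $L$ of $G$ with $N\le L$ and $|G:L|=s$ prime, which exists because the non-trivial soluble group $G/N$ has a normal subgroup of prime index. Dedekind's identity gives $L=N(L\cap M)$ and hence $|L:L\cap M|=|N:C|=p^{r}$, while $ML=G$ (as $L$ is maximal and $M\not\le L$) gives $|M:L\cap M|=s$. Condition (3) applied to $(L,M)$ with a common exponent $n$ forces $p^{r}=q_{L}^{n}$ and $s=q_{M}^{n}$; the primality of $s$ pins down $n=1$, whence $p^{r}$ is prime and $r=1$. Then $M/C$ embeds in $\mathrm{Aut}(N/C)\cong\mathbb{Z}_{p-1}$ (so is cyclic) and acts freely on $N/C\setminus\{0\}$; applying (3) to $M$ and any conjugate $M^{g}$ with $g\in G\setminus M$ identifies $|M/C|$ with the common orbit size $q^{n}$, $n\le k$. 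The order-$pq^{n}$ quotient $G/C$ is non-nilpotent by the non-triviality of the action, so $M$ is $n$-modularly embedded.

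The subtlest step is the $(3)\Rightarrow(1)$ direction, specifically forcing $|G:M|$ to be prime by means of the auxiliary normal maximal $L$ containing $N$: the common-exponent condition of (3), applied to $(L,M)$, pins down $n=1$ through the primality of $s=|G:L|$, which in turn forces $p^{r}=|G:M|$ to be prime. All remaining steps are structural computations in the primitive quotient $G/\mathrm{Core}_{G}(M)$ and elementary index bookkeeping via Dedekind's modular identity.
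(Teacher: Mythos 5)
Your worked-out legs are correct, but your architecture differs from the paper's, which proves the single cycle $(1)\Rightarrow(2)\Rightarrow(3)\Rightarrow(1)$. Your $(1)\Rightarrow(2)$ is essentially the paper's argument (supersolubility via prime indices of maximal subgroups, then the primitive quotient $G/\mathrm{Core}_{G}(M)$ and the structure given by Lemma~\ref{l2.2}), and your direct $(2)\Rightarrow(1)$ is an extra leg the paper does not need. The genuinely different part is $(3)\Rightarrow(1)$: the paper runs a minimal-counterexample induction, reduces to $\mathrm{Core}_{G}(M)=1$ via Lemma~\ref{l2.6}(2), and forces $|N|=p$ by applying $(3)$ to the pair $(NR,M)$ with $R$ maximal in $M$, which requires the inductively obtained supersolubility of $M\cong G/N$; you instead argue induction-free by applying $(3)$ to the pair $(L,M)$, where $L$ is a normal maximal subgroup of prime index $s$ containing $N$, and the ``same exponent $n$ for both indices'' clause of $(3)$ together with the primality of $s=|M:L\cap M|$ pins $n=1$, whence $|N/C|=|L:L\cap M|$ is prime. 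That is a clean exploitation of the common-exponent condition and removes the induction; from there both proofs finish identically (cyclic $M/C$, trivial intersection of distinct conjugates of $M$ over $C$, $(3)$ applied to $M$ and $M^{g}$, then Definition~\ref{d1} or Lemma~\ref{l2.2}).

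The one genuine gap is $(2)\Rightarrow(3)$, which you only assert (``a case analysis on normality \dots\ yields \dots\ a common exponent $n\leq k$''). The entire content of this direction is the common exponent, and normality of the $M_{i}$ is not the right parameter to split on. The paper splits on whether $K=\mathrm{Core}_{G}(M_{1})\leq M_{2}$ and, if so, on whether $p=|N/K|$ divides $|M_{2}/K|$: when $K\nleq M_{2}$ or $p$ divides $|M_{2}/K|$ one gets $G=M_{1}M_{2}$ (respectively $N\leq M_{2}$) and both indices are prime, so $n=1$; in the remaining case $M_{2}/K$ is a $q$-group, hence by Sylow's theorem a conjugate of the cyclic $M_{1}/K$, and one must still prove $M_{1}\cap M_{2}=K$ (the subgroup $(M_{1}\cap M_{2})/K$ is normal in both cyclic factors, hence normal in $G/K$, hence trivial since $\mathrm{Core}_{G/K}(M_{1}/K)=1$) in order to get $|M_{i}:M_{1}\cap M_{2}|=q^{n}$ with the same $n$ and the $n$-maximality from cyclicity. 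None of this is in your sketch, and without it the equality of the two exponents is unproved. A smaller imprecision: in $(1)\Rightarrow(2)$ the three identities you list give only $HC\leq C_{G}(H/K)$; the reverse inclusion (or, what suffices, the cyclicity of $M/C$ of order $q^{n}$) needs the self-centralizing socle of the primitive soluble quotient $G/C$, i.e.\ Lemma~\ref{l1.1}, which you invoke in $(2)\Rightarrow(1)$ but should also cite here.
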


\begin{corollary} \cite[Theorem~5.3.10]{sch2}\label{c1}
Every maximal subgroup of a group $G$ is modular in $G$ if and only if $G$ is supersoluble and for all complemented chief factors $H/K$ of $G$, $G/C_{G}(H/K)$ is either cyclic of prime order or $1$.
\end{corollary}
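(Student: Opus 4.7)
The plan is to obtain Corollary~\ref{c1} as an immediate specialization of Theorem~\ref{t3.1} to the case $k=1$, using Remark~\ref{r1} to translate between modularity and $1$-submodularity.

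First I would invoke Remark~\ref{r1}(1): for a maximal subgroup $M$ of a group $G$, the subgroup $M$ is $1$-submodular in $G$ if and only if $M$ is modular in $G$. Therefore condition~(1) of Corollary~\ref{c1} (every maximal subgroup of $G$ is modular in $G$) is literally the same as condition~(1) of Theorem~\ref{t3.1} taken with $k=1$ (every maximal subgroup of $G$ is $1$-submodular in $G$).

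Next I would specialize condition~(2) of Theorem~\ref{t3.1} to $k=1$. The theorem provides a natural number $n$ with $n\leq k$; setting $k=1$ forces $n=1$, so the group $G/C_{G}(H/K)$ is either $1$ or cyclic of order $q^{n}=q$, that is, cyclic of prime order. This is precisely condition~(2) of Corollary~\ref{c1}. The equivalence (1)$\Leftrightarrow$(2) of Theorem~\ref{t3.1} with $k=1$ then gives the desired equivalence.

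There is no real obstacle; the corollary is a direct specialization, and everything of substance is already contained in Theorem~\ref{t3.1} and in Schmidt's characterization of maximal modular subgroups recalled in the introduction. The only small point to note is that the auxiliary characterization via $n$-maximal intersections (condition~(3) of Theorem~\ref{t3.1}) is not needed for the corollary and may be omitted from the proof.
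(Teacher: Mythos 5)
Your proposal is correct and is exactly the paper's intended derivation: the corollary is stated as the $k=1$ case of Theorem~\ref{t3.1}, with Remark~\ref{r1}(1) supplying the translation between modular and $1$-submodular maximal subgroups, and $n\leq k=1$ forcing $|G/C_{G}(H/K)|=q$. Nothing further is needed.
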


Recall \cite[p.~130]{Wei} that a group $G$ is called an {\sl $\mathrm{LM}$-group} if for every pair of subgroups $A$, $B$ of $G$ with $A$ maximal in $\langle A, B\rangle$, $A\cap B$ is a maximal subgroup of $B$.

\begin{definition}\label{d3}
Let $k$ be a fixed natural number. A group $G$ will be called an $k$-$\mathrm{LM}$-group if for every pair of subgroups $A$, $B$ of $G$ with $A$ maximal in $\langle A, B\rangle$, $A\cap B$ is an $n$-maximal subgroup of $B$ and $|B: A\cap B|=q^{n}$ for some prime $q$ and some natural number $n\leq k$.
\end{definition}

\begin{example} \label{e3.2}
Let $k=2$ and $n\in\{1, 2\}$. Let $G$ is from the example \ref{e3.1}. Let $A$ and $B$ denote subgroups of $G$ with $A$ maximal in $\langle A, B\rangle$. The following cases are possible.

1. $A=1, B\cong Z_{2}$; $A\cong Z_{5}, B\cong Z_{2}$;
$A\cong Z_{4}, B\cong Z_{2}$ and $B\not\leq A$; $A\cong Z_{2}, B\cong Z_{4}$ and $A\leq B$. Then $A\cap B$ is an $1$-maximal subgroup of $B$ and $|B: A\cap B|=2$.

2. $A=1, B\cong Z_{5}$; $A\cong Z_{2}, B\cong Z_{5}$;
$A\cong Z_{4}, B\cong Z_{5}$; $A\cong Z_{4}, |B|=10$;
$A\cong Z_{4}, B=G$. Then $A\cap B$ is an $1$-maximal subgroup of $B$ and $|B: A\cap B|=5$.

3. $A\cong Z_{4}, B=A^{h}$ for $1\not=h\in G$. If $|A\cap B|=2$ then $G=\langle A, B\rangle \leq N_{G}(A\cap B)$. This contradicts the fact that $A\cap B\leq\mathrm{Core}_{G}(A)=1$. Thus, $A\cap B=1$ is an $2$-maximal subgroup of $B$ and $|B: A\cap B|=2^{2}$.

So $G$ is a $2$-$\mathrm{LM}$-group, but $G$ is not an $\mathrm{LM}$-group.
\end{example}

\begin{remark}\label{r2}
 By \cite[Theorem~4.4.3]{Wei} every $\mathrm{LM}$-group $G$ is supersoluble and therefore in $G$, every maximal subgroup of any subgroup has a  prime index by \cite[Theorem~VI.9.5]{Hup}. Then the following statement holds for $k=1$.

(1) A group $G$ is $1$-$\mathrm{LM}$-group if and only if $G$ is an $\mathrm{LM}$-group.
\end{remark}

\begin{theorem} \label{t3.2}
Let $G$ be a group. Then the following statements are equivalent.

$(1)$ Every subgroup of $G$ is $k$-submodular in $G$.

$(2)$ Every maximal subgroup of any subgroup $A$ of $G$ is $k$-submodular in $A$.

$(3)$ $G$ is supersoluble and $G/C_{G}(H/K)$ is either $1$ or a cyclic group of a order $q^{n}$ for every chief factor $H/K$ of $G$, where $q$ is some prime and a natural number $n\leq k$.

$(4)$ $G$ is an $k$-$\mathrm{LM}$-group.
\end{theorem}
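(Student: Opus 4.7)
The plan is to prove the cycle $(1)\Rightarrow(2)\Rightarrow(3)\Rightarrow(4)\Rightarrow(1)$, with the bulk of the work in $(2)\Rightarrow(3)$.

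The equivalence $(1)\Leftrightarrow(2)$ rests on an auxiliary persistence lemma: \emph{if $H\leq A\leq G$ and $H$ is $k$-submodular in $G$, then $H$ is $k$-submodular in $A$}. I would prove this by taking a witnessing chain $H=H_0\leq\cdots\leq H_m=G$, intersecting with $A$ to obtain $H=H_0\cap A\leq H_1\cap A\leq\cdots\leq H_m\cap A=A$, and verifying that $n$-modular embeddedness transfers along the intersections via the core-quotient description in Definition \ref{d1}. Granted the lemma, $(1)\Rightarrow(2)$ is immediate; conversely $(2)\Rightarrow(1)$ follows by taking any $H\leq G$, building a chain $H=H_0<H_1<\cdots<H_m=G$ with $H_{i-1}$ maximal in $H_i$, and invoking $(2)$ at each step to get a one-step $n_i$-modular embedding with $n_i\leq k$, which concatenate into a $k$-submodular chain.

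For $(2)\Rightarrow(3)$, applying Theorem \ref{t3.1} to $G$ gives supersolubility together with the centralizer condition on complemented chief factors; the heart of the argument is extending this to an arbitrary chief factor $H/K$. Passing to $G/K$ reduces to $K=1$, so $H$ is a minimal normal subgroup of prime order $p$ and $\overline G:=G/C_G(H)$ is cyclic, sitting inside $\mathrm{Aut}(H)\cong Z_{p-1}$. Suppose $|\overline G|$ is divisible by two distinct primes $q_1,q_2$; let $R\leq G$ be the preimage of the unique order-$q_1q_2$ subgroup of $\overline G$, and let $R_0$ be a Hall $p'$-subgroup of $R$ (which exists by solubility). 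Setting $A=HR_0$, the coprimeness $H\cap R_0=1$ gives $A=H\rtimes R_0$ with $H$ a complemented chief factor of $A$, and $R_0\cdot C_G(H)=R$ yields $A/C_A(H)\cong R/C_G(H)$ of order $q_1q_2$. Since $(2)$ is inherited by $A$, Theorem \ref{t3.1} applied to $A$ forces $A/C_A(H)$ to be trivial or of prime power order, a contradiction. Hence $|\overline G|=q^n$, and an analogous argument with $R$ replaced by the preimage of the Sylow $q$-subgroup of $\overline G$ bounds $n\leq k$.

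The remaining implications are more routine. Under $(3)$, for $A$ maximal in $C=\langle A,B\rangle$, supersolubility gives $|C:A|=p$ prime; either $A\trianglelefteq C$ (so $AB=C$ and $|B:A\cap B|=p$) or the centralizer condition identifies $C/\mathrm{Core}_C(A)$ as a Frobenius group of order $pq^n$ in which distinct Frobenius complements intersect trivially, giving $|B:A\cap B|=q^n$ and $A\cap B$ as $n$-maximal in $B$ through a chief series of the complement. For $(4)\Rightarrow(1)$, route through $(4)\Rightarrow(2)\Rightarrow(1)$: setting $B=G$ in the $k$-$\mathrm{LM}$ condition forces every maximal subgroup of $G$ to have prime index, so $G$ is supersoluble; for $M$ maximal in $A\leq G$ not normal in $A$, pick $a\in A\setminus N_A(M)$, note $\langle M,M^a\rangle=A$ by maximality, apply the $k$-$\mathrm{LM}$ hypothesis to $(M,M^a)$ to obtain $|M^a:M\cap M^a|=q^n$, and conclude from $M\cap M^a=\mathrm{Core}_A(M)$ that $A/\mathrm{Core}_A(M)$ is a non-nilpotent primitive supersoluble group of order $pq^n$ with $|A:M|=p$, so $M$ is $n$-modularly embedded in $A$. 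The main obstacle is the complemented-to-arbitrary extension in $(2)\Rightarrow(3)$, namely the construction of $A=HR_0$ via Schur--Zassenhaus together with the identification $A/C_A(H)\cong R/C_G(H)$; a secondary technical point is the persistence lemma underlying $(1)\Leftrightarrow(2)$, whose proof requires a case analysis of $n$-modular embeddedness under intersection with intermediate subgroups.
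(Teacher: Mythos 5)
Your route is the paper's route: the same cycle $(1)\Rightarrow(2)\Rightarrow(3)\Rightarrow(4)\Rightarrow(1)$, with $(1)\Rightarrow(2)$ resting on the intersection lemma (the paper's Lemma~\ref{l2.5}(1)), $(2)\Rightarrow(3)$ obtained by turning a chief factor $H/K$ into a complemented chief factor of $A=H\cdot(\text{Hall } p'\text{-subgroup})$ and applying Theorem~\ref{t3.1} to $A$ (your two-stage version -- first excluding two primes, then bounding the exponent -- is the same construction the paper performs in one pass), and the primitive quotient $Z_p\rtimes Z_{q^{n}}$ carrying the load in $(3)\Rightarrow(4)$ and $(4)\Rightarrow(1)$. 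Your $(4)\Rightarrow(2)\Rightarrow(1)$ is a mild streamlining of the paper's minimal-counterexample argument; it works, but the asserted equality $M\cap M^{a}=\mathrm{Core}_{A}(M)$ is not automatic and needs the paper's Case~II computation: $M/\mathrm{Core}_{A}(M)$ is cyclic (it embeds in $\mathrm{Aut}(Z_p)$ since $A$ is supersoluble), so $(M\cap M^{a})/\mathrm{Core}_{A}(M)$ is normal in both $M/\mathrm{Core}_{A}(M)$ and $M^{a}/\mathrm{Core}_{A}(M)$, hence normal in $A/\mathrm{Core}_{A}(M)=\langle M,M^{a}\rangle/\mathrm{Core}_{A}(M)$, and therefore lies in the core.

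The one step that fails as written is the dichotomy in $(3)\Rightarrow(4)$. You split into ``$A\unlhd C=\langle A,B\rangle$, giving $|B:A\cap B|=p$'' versus ``$A$ non-normal, giving $|B:A\cap B|=q^{n}$''. The second alternative is false in general: take $B=C$ itself (or any $B$ whose image in $\overline{C}=C/\mathrm{Core}_{C}(A)$ has order divisible by $p$); then the image of $B$ contains the socle of $\overline{C}$ and $|B:A\cap B|=p$, not a $q$-power. Even when the image of $B$ is a $q$-group, it lies in a conjugate of the cyclic complement and one only gets $|B:A\cap B|=q^{m}$ with $m\leq n\leq k$, not $q^{n}$. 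So the non-normal case must itself be split by whether $p$ divides $|B\,\mathrm{Core}_{C}(A)/\mathrm{Core}_{C}(A)|$ -- exactly the paper's Cases I and II: if it does, the socle lies in the image of $B$ and the index is $p$; if not, the image of $B$ is a cyclic $q$-group inside a conjugate of $A/\mathrm{Core}_{C}(A)$, giving index $q^{m}$, and the $m$-maximality of $A\cap B$ in $B$ is read off from the cyclic quotient $B/(B\cap\mathrm{Core}_{C}(A))$. The Frobenius structure you set up suffices for this, but the case analysis must follow the image of $B$, not the normality of $A$. (One further caveat, which you share with the paper rather than deviate from it: in $(3)\Rightarrow(4)$ the centralizer condition is applied to a chief factor of the subgroup $C=\langle A,B\rangle$, whereas $(3)$ as stated speaks only of chief factors of $G$; this inheritance is left implicit in the paper's proof as well.)
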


In \cite{Wei} and \cite{sch2} the following characterization of $\mathrm{LM}$-groups was given, based on the work of Ito \cite{Ito}.

\begin{corollary} \cite[Theorem~4.4.4]{Wei}, \cite[Theorem~5.3.11]{sch2}\label{c2}
A group $G$ is an $\mathrm{LM}$-group if and only if $G$ is supersoluble and for all chief factor $H/K$ of $G$, $G/C_{G}(H/K)$ is either cyclic of prime order or $1$.
\end{corollary}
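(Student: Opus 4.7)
The plan is to deduce Corollary~\ref{c2} directly from Theorem~\ref{t3.2} by specializing to the case $k=1$, so no independent argument is required. First I would invoke Remark~\ref{r2}(1), which identifies the class of $\mathrm{LM}$-groups with the class of $1$-$\mathrm{LM}$-groups. This identification ultimately rests on the fact that an $\mathrm{LM}$-group is supersoluble by \cite[Theorem~4.4.3]{Wei}, so that every maximal subgroup of any subgroup has prime index by \cite[Theorem~VI.9.5]{Hup}; consequently the extra prime-power-index constraint $|B:A\cap B|=q^{n}$ with $n\le 1$ demanded by Definition~\ref{d3} is automatically satisfied.

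Next I would apply the equivalence $(3)\Longleftrightarrow(4)$ of Theorem~\ref{t3.2} in the case $k=1$. On the $(3)$-side, the factor $G/C_{G}(H/K)$ is required to be either trivial or cyclic of order $q^{n}$ with $n$ a natural number satisfying $n\le 1$; since $n$ is a positive integer, the only possibility is $n=1$, and hence $q^{n}=q$ is prime. The resulting condition is exactly the one appearing in Corollary~\ref{c2}, and supersolubility is carried over unchanged. Combining the reduction $\mathrm{LM}=1$-$\mathrm{LM}$ with this equivalence delivers both directions of the corollary simultaneously.

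The step that would carry the real content is therefore not the corollary itself but the underlying justification of Remark~\ref{r2}(1): one must confirm that the apparent strengthening from "maximality in $\langle A,B\rangle$" to "prime power index $q^{n}$ with $n\le k$" is vacuous at $k=1$. Once this observation is accepted, Corollary~\ref{c2} becomes a purely formal specialization of Theorem~\ref{t3.2}, with no further case analysis, induction, or appeal to Ito's theorem required in the derivation itself.
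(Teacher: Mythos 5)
Your proposal is correct and follows essentially the same route as the paper, which states Corollary~\ref{c2} as an immediate specialization of Theorem~\ref{t3.2} at $k=1$, with Remark~\ref{r2}(1) supplying the identification of $\mathrm{LM}$-groups with $1$-$\mathrm{LM}$-groups exactly as you describe. Your observation that the constraint $|B:A\cap B|=q^{n}$, $n\le 1$, forces $n=1$ (and is automatic for $\mathrm{LM}$-groups via supersolubility and \cite[Theorem~VI.9.5]{Hup}) is precisely the content the paper leaves implicit in Remark~\ref{r2}.
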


\begin{theorem} \label{t3.3}
Let $\mathfrak{X}$  be a class of all groups $G$  such that every maximal subgroup of $G$ is $k$-submodular in $G$. Let $\mathfrak{Y}$  be a class  of all groups $G$ such that
every subgroup of $G$ is $k$-submodular in $G$.
Then

$(1)$ $\mathfrak{X}$  is a Schunk class consisting of supersoluble groups;

$(2)$ $\mathfrak{Y}$ is a hereditary formation;

$(3)$ a group $G\in\mathfrak{X}$ if and only if $G/\Phi(G)\in\mathfrak{Y}$.
\end{theorem}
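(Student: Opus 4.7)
The plan is to leverage the chief-factor characterizations in Theorem~\ref{t3.1}(2) and Theorem~\ref{t3.2}(3) together with standard formation-theoretic closure arguments.

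For part~(1), the inclusion of $\mathfrak{X}$ in the class of supersoluble groups is immediate from Theorem~\ref{t3.1}(2). Closure of $\mathfrak{X}$ under epimorphic images follows because chief factors of $G/N$ correspond bijectively to chief factors of $G$ lying above $N$, with complementation and centralizers descending to the quotient; the characterization \ref{t3.1}(2) is therefore inherited. For the primitive-saturation property of a Schunck class---if every primitive epimorphic image of $G$ lies in $\mathfrak{X}$, then $G\in\mathfrak{X}$---supersolubility of $G$ follows from the fact that the class of supersoluble groups is a formation. For a complemented chief factor $H/K$ of $G$ with complement $M$, the quotient $G/\mathrm{Core}_G(M)$ is primitive and carries the same chief factor up to isomorphism; the assumption $G/\mathrm{Core}_G(M)\in\mathfrak{X}$ gives, via Theorem~\ref{t3.1}(2), precisely the required structure of $G/C_G(H/K)$, whence $G\in\mathfrak{X}$.

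For part~(2), I use the equivalence \ref{t3.2}(3)$\Leftrightarrow$(4). Subgroup closure is clearest from condition~\ref{t3.2}(4), since the defining property of a $k$-$\mathrm{LM}$-group is quantified over arbitrary subgroups. Image closure follows as in Part~(1). For closure under finite direct products, a chief factor of $A\times B$ has the form $\widetilde H/\widetilde K$ with $\widetilde H,\widetilde K$ lying entirely in one factor, and $C_{A\times B}(\widetilde H/\widetilde K)=C_A(\widetilde H/\widetilde K)\times B$ (or the symmetric statement), so the centralizer condition transfers unchanged; supersolubility is likewise preserved. Subgroup, image, and direct-product closure together constitute a hereditary formation, since $G/(A\cap B)$ embeds as a subdirect product in $G/A\times G/B$.

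For part~(3), the forward direction: $G\in\mathfrak{X}$ gives $G$ supersoluble, hence $G/\Phi(G)$ supersoluble. Every chief factor of $G/\Phi(G)$ lifts to a chief factor $H/K$ of $G$ with $\Phi(G)\le K$; since $G/\Phi(G)$ has trivial Frattini subgroup, all such chief factors are complemented in $G/\Phi(G)$, hence in $G$. Theorem~\ref{t3.1}(2) applies, and because $\Phi(G)\le F(G)\le C_G(H/K)$ in the soluble group $G$, the quotient $G/C_G(H/K)$ coincides with $(G/\Phi(G))/C_{G/\Phi(G)}(\widetilde H/\widetilde K)$, giving $G/\Phi(G)\in\mathfrak{Y}$ by Theorem~\ref{t3.2}(3). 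For the reverse direction, $G/\Phi(G)\in\mathfrak{Y}$ supersoluble forces $G$ supersoluble by saturation. For a complemented chief factor $H/K$ of $G$ with complement $M$, a short minimal-normal argument in $G/K$ shows $M$ is maximal in $G$, so $\Phi(G)\le M$ and $\Phi(G)\cap H\le M\cap H=K$. Consequently $H\Phi(G)/K\Phi(G)\cong H/K$ as $G$-modules, which identifies $H/K$ with a chief factor of $G/\Phi(G)$ whose centralizer quotient coincides with $G/C_G(H/K)$; the hypothesis on $G/\Phi(G)$ now supplies the centralizer condition required by Theorem~\ref{t3.1}(2), so $G\in\mathfrak{X}$. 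The main technical point is this matching of complemented chief factors of $G$ with chief factors of $G/\Phi(G)$ via the module isomorphism $H/K\cong H\Phi(G)/K\Phi(G)$; once in place, everything reduces to the characterizations already established.
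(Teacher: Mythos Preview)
Your proof is essentially correct, but it takes a different route from the paper's. The paper argues~(1) directly from the definition: for the homomorph property it uses Lemma~\ref{l2.6}(1) to push $k$-submodularity of a maximal $M$ down to $M/N$, and for primitive closure it uses Lemma~\ref{l2.6}(2) to lift $k$-submodularity of $M/\mathrm{Core}_G(M)$ back up to $M$---no chief-factor bookkeeping needed. For~(3) the paper's reverse direction is one line: since $\Phi(G)\le\mathrm{Core}_G(M)$ for every maximal $M$, the hypothesis $G/\Phi(G)\in\mathfrak{Y}$ gives $G/\mathrm{Core}_G(M)\in\mathfrak{Y}\subseteq\mathfrak{X}$, and then the Schunck property from~(1) yields $G\in\mathfrak{X}$. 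You instead pass everything through the chief-factor characterizations Theorem~\ref{t3.1}(2) and Theorem~\ref{t3.2}(3), matching complemented chief factors of $G$ with chief factors of $G/\Phi(G)$. This works and has the virtue of showing that those characterizations really do the heavy lifting, but the paper's approach is shorter and avoids the module-isomorphism arguments.

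One small correction: in your primitive-closure step for~(1) you write that supersolubility of $G$ ``follows from the fact that the class of supersoluble groups is a formation.'' Being a formation is not enough; you need $\mathfrak{U}$ to be a \emph{saturated} formation (equivalently, a Schunck class), which is Lemma~\ref{l1.3}(2). With that fixed, your argument goes through.
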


\begin{corollary} \cite[Proposition~8]{Zim}\label{c3}
Every maximal subgroup of a group $G$ is modular in $G$ if and only if every subgroup of $G/\Phi(G)$ is submodular in $G/\Phi(G)$.
\end{corollary}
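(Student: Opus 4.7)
The plan is to route everything through the chief-factor descriptions already established in Theorems~\ref{t3.1} and \ref{t3.2}. Let $(\ast)$ abbreviate the condition ``$G/C_G(H/K)$ is trivial or cyclic of order $q^n$ with $n\le k$''. Then $G\in\mathfrak{X}$ iff $G$ is supersoluble and $(\ast)$ holds for every complemented chief factor $H/K$, and $G\in\mathfrak{Y}$ iff $G$ is supersoluble and $(\ast)$ holds for every chief factor.

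I would prove (3) first. The inputs are: non-Frattini chief factors of a soluble group are complemented; chief factors of $G/\Phi(G)$ correspond, as $G$-modules, to chief factors of $G$ lying above $\Phi(G)$; when $\Phi(G)\le K$ there is a natural identification $G/C_G(H/K)\cong (G/\Phi(G))/C_{G/\Phi(G)}(H\Phi(G)/K\Phi(G))$; and supersolubility is a saturated formation. Combining these shows that $(\ast)$ on the complemented chief factors of $G$ is equivalent to $(\ast)$ on all chief factors of $G/\Phi(G)$, giving both directions of (3).

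Statement (1) then follows quickly from (3). Closure of $\mathfrak{X}$ under quotients is immediate from the chief-factor description, since supersolubility and the complemented-chief-factor condition both descend to quotients. For saturation: if $G/\Phi(G)\in\mathfrak{X}$, then $\Phi(G/\Phi(G))=1$ forces every chief factor of $G/\Phi(G)$ to be complemented, so the condition for $\mathfrak{X}$ on $G/\Phi(G)$ coincides with that for $\mathfrak{Y}$; hence $G/\Phi(G)\in\mathfrak{Y}$, and (3) gives $G\in\mathfrak{X}$. Thus $\mathfrak{X}$ is a Schunck class, and its members are supersoluble by Theorem~\ref{t3.1}.

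For (2), hereditariness is immediate from the $k$-$\mathrm{LM}$ characterization in Theorem~\ref{t3.2}(4): the defining condition on pairs $(A,B)$ of subgroups is intrinsic, so it restricts to any subgroup. Closure of $\mathfrak{Y}$ under quotients goes as in (1). For closure under subdirect products---the delicate axiom---I would reduce to $N_1\cap N_2=1$ with $G/N_i\in\mathfrak{Y}$ and verify $(\ast)$ for any chief factor $H/K$ of $G$ by case analysis according to whether $H\cap N_i\le K$ or $H\le KN_i$: in the ``outside'' case for some $i$, the section $HN_i/KN_i$ is a chief factor of $G/N_i$ which is $G$-isomorphic to $H/K$ and satisfies $N_i\le C_G(H/K)$, so $(\ast)$ transfers from $G/N_i$ to $G$; in the ``inside'' case for both $i$, one has the $G$-isomorphism $H/K\cong (H\cap N_1)/(K\cap N_1)$, and applying the ``outside'' argument to this isomorphic chief factor of $G$ (contained in $N_1$) with respect to $N_2$ again yields $(\ast)$. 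The main obstacle is precisely this transfer: one cannot simply embed $G$ into $G/N_1\times G/N_2$ and read off $(\ast)$ in the ambient product, because the prime-power condition is not preserved by direct products that mix distinct primes $q$; the argument has to be carried out inside $G$ via the $G$-module isomorphisms identifying $H/K$ with a chief factor of a single quotient.
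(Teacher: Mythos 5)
Your reduction of the corollary to statement (3) of Theorem~\ref{t3.3} is reasonable, but the step carrying all the weight is not proved. The equivalence you assert in (3) --- that $(\ast)$ on the complemented chief factors of $G$ is the same as $(\ast)$ on \emph{all} chief factors of $G/\Phi(G)$ --- does not follow from the four ingredients you list, and the explicit form of it you use in (1), namely that $\Phi(G/\Phi(G))=1$ forces every chief factor of $G/\Phi(G)$ to be complemented, is false: ``lying above $\Phi(G)$'' is not the same as ``non-Frattini''. Take the group of Example~\ref{e3.1}, $G=\langle a,b \mid a^{5}=b^{4}=1,\ ab=ba^{3}\rangle\cong Z_5\rtimes Z_4$ with $b$ acting faithfully. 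Here $\Phi(G)=1$ (since $\mathrm{Core}_G(\langle b\rangle)=1$), yet the chief factor $\langle a,b^{2}\rangle/\langle a\rangle$ lies inside $\Phi(G/\langle a\rangle)$ and has no complement: the only subgroup of order $10$ in $G$ is $\langle a,b^{2}\rangle$ itself. So a chief factor of $G/\Phi(G)$ need not be (nor be $G$-isomorphic to) a complemented chief factor of $G$, and your hypothesis ``$(\ast)$ on complemented chief factors'' simply says nothing about such factors. Your correspondence does give the converse direction (a complemented chief factor avoids $\Phi(G)$ up to $G$-isomorphism, by the usual $K\leq K(H\cap\Phi(G))\leq H$ dichotomy), but the forward direction --- exactly the ``only if'' half of the corollary --- is where the real content sits, and ``combining these shows'' skips it. The paper obtains that direction not by a formal chief-factor transfer but by shuttling through the subgroup-level equivalences of Theorems~\ref{t3.1} and \ref{t3.2}, and its converse direction through quotient closure of $\mathfrak{Y}$, $\mathfrak{Y}\subseteq\mathfrak{X}$ and Lemma~\ref{l2.6}(2) applied to $G/\mathrm{Core}_G(M)$ for each maximal $M$; any repair of your argument must likewise bridge from complemented to Frattini chief factors by a genuine argument rather than by the module identifications alone.

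Two further points. First, in (1) you conclude ``Schunck class'' from ``quotient-closed homomorph $+$ saturated'', but a Schunck class requires primitive closure, which is strictly stronger: the homomorph of all cyclic $\{2,3\}$-groups is saturated, yet $Z_2\times Z_2$ (all of whose primitive quotients are $Z_2$) shows it is not a Schunck class. So (1) would still need the $G/\mathrm{Core}_G(M)$ argument via Lemma~\ref{l2.6}(2) --- though (1) is not actually needed for the corollary. Second, even granting Theorem~\ref{t3.3}(3), the corollary speaks of modular and submodular subgroups; you never specialize to $k=1$ and invoke Remark~\ref{r1} (i.e. Schmidt's description of maximal modular subgroups) to identify ``maximal and modular'' with ``maximal and $1$-submodular'' and ``submodular'' with ``$1$-submodular''. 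That bridge is short, but without it the statement you prove is not the statement of the corollary.
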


Further $\mathfrak{A}(p-1)_{k}$ means the class of all abelian groups of exponent dividing $p-1$ and not dividing by the $(k + 1)$th powers of primes.

\begin{theorem} \label{t3.5}
Let $\mathfrak{K}$ be the class of all supersoluble groups in which every Sylow subgroup is $k$-submodular. Then $\mathfrak{K}$ is a hereditary saturated formation and $\mathfrak{K}$  is defined by a formation function $h$ such that
$h(p)=(G\ | \ G\in\mathfrak{A}(p-1)_{k}$ and all Sylow subgroups of $G$ are cyclic$)$.
\end{theorem}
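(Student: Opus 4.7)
My plan is to identify $\mathfrak{K}$ with the saturated formation $\mathrm{LF}(h)$ locally defined by the function $h$ in the statement, and then to invoke the Gaschütz--Lubeseder--Schmid theorem together with the subgroup-closure of each $h(p)$ to conclude. A supersoluble group has chief factors of prime order $p$, so for any such factor $H/K$ the quotient $G/C_G(H/K)$ embeds in $\mathrm{Aut}(\mathbb{Z}_p)\cong\mathbb{Z}_{p-1}$ and is automatically cyclic of order dividing $p-1$; hence membership in $h(p)$ reduces to the arithmetic constraint that no prime power $q^{k+1}$ divides $|G/C_G(H/K)|$, and the whole theorem boils down to the chief-factor equivalence
\begin{equation*}
G\in\mathfrak{K}\iff G\ \text{is supersoluble and}\ G/C_G(H/K)\in h(|H/K|)\ \text{for every chief factor}\ H/K\ \text{of}\ G.
\end{equation*}

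For the forward implication I would fix a prime $q$ and a Sylow $q$-subgroup $Q$ of $G$, which by hypothesis sits in a chain $Q=H_0\le H_1\le\cdots\le H_m=G$ with each step $n_i$-modularly embedded and $n_i\le k$. For a chief factor $H/K$ of order $p$, the action of $G$ on $H/K$ restricts along this chain, and the $q$-part of the cyclic group $G/C_G(H/K)$ is controlled by the corresponding $q$-action of $Q$. At a normal step $H_{i-1}\unlhd H_i$ no new constraint appears, but at a non-normal step Definition~\ref{d1} forces $H_i/\mathrm{Core}_{H_i}(H_{i-1})$ to be non-nilpotent of order $p_iq_i^{n_i}$ with $n_i\le k$, which caps the power of $q$ contributed by that link. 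Iterating along the chain, applying Theorem~\ref{t3.1} to read off the chief-factor data of each non-normal link, and combining the bounds yields $q^{k+1}\nmid|G/C_G(H/K)|$. The converse is an induction on $|G|$: choose a minimal normal subgroup $N$, apply the inductive hypothesis to $G/N$ to produce a $k$-submodular chain for $QN/N$, and then lift it to $G$ by attaching a link through $N$ whose non-nilpotent section has $q$-exponent bounded by the hypothesis on $G/C_G(N)$.

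Once $\mathfrak{K}$ is matched with $\mathrm{LF}(h)$, the formation and saturation properties are automatic from the local-definition machinery. Heredity follows after observing that each $h(p)$ is subgroup-closed: any subgroup of a cyclic group in $\mathfrak{A}(p-1)_k$ is again cyclic, with exponent dividing $p-1$ and with prime-power parts bounded by $q^k$, so it lies in $h(p)$; consequently a chief factor of any $H\le G$ yields a quotient of a section of some $G/C_G(\cdot)\in h(p)$, which remains in $h(p)$. I expect the main obstacle to be the forward direction of the equivalence above: the careful bookkeeping that extracts the exponent $n_i\le k$ at each non-normal modular link from the non-nilpotent quotient of order $p_iq_i^{n_i}$, and then aggregates these local bounds into a global bound on $|G/C_G(H/K)|$, is the technical heart of the argument.
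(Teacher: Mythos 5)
Your converse direction (every group in $\mathrm{LF}(h)$ lies in $\mathfrak{K}$) and your packaging of heredity and saturation via the subgroup-closure of $h(p)$ and the local-formation machinery are sound, and they would indeed let you avoid the paper's separate saturation argument. The genuine gap is in the forward inclusion $\mathfrak{K}\subseteq \mathrm{LF}(h)$, exactly at the step you call the technical heart. The data at a non-normal link of the chain $Q=H_0\le\cdots\le H_m=G$, namely that $H_i/\mathrm{Core}_{H_i}(H_{i-1})$ is non-nilpotent of order $p_iq_i^{n_i}$ with $n_i\le k$, constrains how $H_{i-1}$ sits inside $H_i$; it says nothing directly about the action of $Q$ on a chief factor $H/K$ of $G$ located elsewhere in the group. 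Moreover, since $QC_G(H/K)/C_G(H/K)$ is the full Sylow $q$-subgroup of the cyclic group $G/C_G(H/K)$, the entire $q$-action is already present at the bottom of the chain, so there is no meaningful ``contribution per link'' to read off; and even if each non-normal link could be assigned a contribution at most $q^{n_i}\le q^k$, combining such bounds would only give $q^{\sum n_i}$, not $q^k$. You offer no mechanism preventing several non-normal links with the same prime $q$ from compounding, which is precisely what has to be ruled out: e.g.\ one must exclude a Sylow subgroup $Z_{q^2}$ acting faithfully on a $p$-chief factor while being $1$-submodular through two links each of type $pq$, and that exclusion requires an argument, not bookkeeping of the orders $p_iq_i^{n_i}$.

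For comparison, the paper does not aggregate along the chain at all. It first proves that $\mathfrak{K}$ is saturated (a substantial step using Lemma~\ref{l2.7}, the fact that $G\in\mathrm{w}\mathfrak{U}_{k}$ forces $G/F(G)\in\mathcal{A}_{k}$ from Lemma~\ref{l1.6}(3), and Shemetkov's Hall-subgroup criterion), and then handles $\mathfrak{K}\subseteq LF(h)$ by a minimal counterexample: saturation and the formation property reduce to a primitive group $G=NM$ with $N=C_G(N)$ of order $p$ and $M$ cyclic, where heredity of $k$-submodularity under intersections (Lemma~\ref{l2.5}) passes the hypothesis to $RN$ for $R\in\mathrm{Syl}_q(M)$, and Definition~\ref{d1} via Lemma~\ref{l2.2} is applied exactly once to a maximal Sylow subgroup with trivial core. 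Alternatively, your forward direction could be repaired globally rather than link-by-link: by Lemma~\ref{l2.7} all Sylow subgroups of $G\in\mathfrak{K}$ are $\mathfrak{U}_{k}$-subnormal, so $G\in\mathrm{w}\mathfrak{U}_{k}$ and $G/F(G)\in\mathcal{A}_{k}$; since $F(G)\le C_G(H/K)$ and $G/C_G(H/K)$ is cyclic of order dividing $p-1$, its order is its exponent and is not divisible by $q^{k+1}$, which gives $G/C_G(H/K)\in h(p)$. Either route supplies what your aggregation step was meant to do; as written, that step does not go through.
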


\begin{theorem} \label{t3.6}
The class $\mathfrak{F}$ of all groups in which every Sylow subgroup is $k$-submodular is  a hereditary saturated formation and $\mathfrak{F}$ is defined by a formation function $f$ such that
$f(p)=(G\ | \ \mathrm{Syl}(G)\subseteq\mathfrak{A}(p-1)_{k}$ and all Sylow subgroups of $G$ are cyclic$)$.
\end{theorem}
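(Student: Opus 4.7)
The plan is to establish the closure properties of $\mathfrak{F}$ one at a time and then identify it as a locally defined formation via Gaschütz's theorem. I first verify that $\mathfrak{F}$ is hereditary. Given $H \leq G \in \mathfrak{F}$ and a Sylow $p$-subgroup $H_p$ of $H$, I choose a Sylow $p$-subgroup $P$ of $G$ containing $H_p$, so that $H_p = P \cap H$ by an order argument. Using the (preceding) lemma that $k$-submodularity is preserved under intersection with an arbitrary subgroup, $P \cap H$ is $k$-submodular in $H$, hence $H \in \mathfrak{F}$.

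Next I verify that $\mathfrak{F}$ is a formation. For closure under quotients, if $P$ is a Sylow subgroup of $G$ with a witnessing chain $P = H_0 \leq \cdots \leq H_m = G$ of $n_i$-modular embeddings, then $PN/N$ is a Sylow subgroup of $G/N$ and the image chain $H_iN/N$ witnesses its $k$-submodularity; this requires the corresponding lemma that $n$-modular embedding is preserved under quotients, which follows from the compatibility of $\mathrm{Core}_G$ with homomorphisms and the index formula in Definition~\ref{d1}. Closure under finite subdirect products then follows in the standard way from closure under quotients together with hereditariness.

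For saturation, by Gaschütz's theorem it suffices to show that $\mathfrak{F}$ coincides with the formation $\mathrm{LF}(f)$ locally defined by $f$. In one direction: given $G \in \mathfrak{F}$ and a $p$-chief factor $H/K$, a Sylow $p$-subgroup $P$ of $G$ covers $H/K$, and the image of its $k$-submodular chain in $G/C_G(H/K)$, combined with the fact that $H/K$ is a faithful irreducible $\mathbb{F}_p[G/C_G(H/K)]$-module, forces the Sylow subgroups of $G/C_G(H/K)$ to be cyclic with exponents dividing both $p-1$ and a $k$-th power of the relevant prime, i.e. in $\mathfrak{A}(p-1)_k$. In the other direction, assuming $G/C_G(H/K) \in f(p)$ for every $p$-chief factor $H/K$, I reconstruct a $k$-submodular chain for each Sylow subgroup of $G$ by induction along a chief series, using that each $f(p)$-group is metacyclic and that the factors provide maximal subgroups satisfying the index/order bound required by Definition~\ref{d1}. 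Finally, each $f(p)$ is a hereditary formation (``cyclic Sylows'' and ``Sylows in $\mathfrak{A}(p-1)_k$'' are preserved under subgroups, quotients and subdirect products), so $\mathrm{LF}(f)$ is saturated by Gaschütz.

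The main obstacle will be the inclusion $\mathfrak{F} \subseteq \mathrm{LF}(f)$: translating the mere existence of a $k$-submodular chain for a Sylow subgroup into the precise numerical conclusions on $G/C_G(H/K)$. This requires tracking the primes $p$, $q$ and the bound $n \leq k$ of Definition~\ref{d1} through every step of the chain and matching them to the exponent bound encoded in $\mathfrak{A}(p-1)_k$, combined with a Clifford-type analysis of the $p$-chief factor $H/K$ viewed as a module for $G/C_G(H/K)$. The remaining parts — hereditariness, closure under quotients and subdirect products — should go through routinely once the corresponding lemmas for $k$-submodular subgroups from the earlier sections are in hand.
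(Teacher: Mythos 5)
There is a genuine gap at the heart of your proposal: the identification $\mathfrak{F}=LF(f)$ is never actually proved. You yourself flag the inclusion $\mathfrak{F}\subseteq LF(f)$ as ``the main obstacle'' and offer only the intention of ``tracking the primes through the chain'' plus ``a Clifford-type analysis''; but faithfulness and irreducibility of $H/K$ as a module for $G/C_{G}(H/K)$ do not by themselves force cyclic Sylow subgroups of exponent dividing $p-1$ (the quaternion group of order $8$ acts faithfully and irreducibly on an elementary abelian group of order $9$, yet has exponent $4$ and a non-cyclic Sylow $2$-subgroup), so all the numerical conclusions of $f(p)$ must be extracted from $k$-submodularity, and no mechanism for doing so is given. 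The difficulty is compounded by your strategy of deducing saturation afterwards from the Gasch\"{u}tz--Lubeseder--Schmid theorem: that only works if $\mathfrak{F}=LF(f)$ is established without appealing to the saturation of $\mathfrak{F}$, whereas the paper proves saturation first, by an independent minimal-counterexample argument (using $\mathfrak{F}\subseteq\mathrm{w}\mathfrak{U}$ and Ore dispersiveness, the saturation and local description of $\mathfrak{K}$ from Theorem~\ref{t3.5}, and Shemetkov's theorem on Hall subgroups), and then uses that saturation in \emph{both} inclusions $LF(f)\subseteq\mathfrak{F}$ and $\mathfrak{F}\subseteq LF(f)$, to reduce to a counterexample with $\Phi(G)=1$ and a unique minimal normal subgroup $N=C_{G}(N)\in\mathrm{Syl}_{p}(G)$, after which Lemma~\ref{l2.2}, Lemma~\ref{l1.4} and the formation function $h$ of Theorem~\ref{t3.5} finish the job. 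So you cannot simply import the paper's arguments into your scheme; your route would require new proofs of both inclusions, and none is supplied. In particular your sketch of $LF(f)\subseteq\mathfrak{F}$ (``induction along a chief series, $f(p)$-groups are metacyclic'') does not address the critical configuration $G=QN$ with $N$ minimal normal, where the paper needs $\mathrm{Core}_{G}(Q)=1$, $N=C_{G}(N)$ and the $\mathbb{P}$-subnormality of $Q$ coming from $\mathrm{w}\mathfrak{U}$ to get $|G:Q|=p$ before Lemma~\ref{l2.2} applies.

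A smaller but real error: closure under subdirect products does \emph{not} ``follow in the standard way from closure under quotients together with hereditariness'' --- the class of cyclic groups is subgroup- and quotient-closed but is not a formation. The paper's Lemma~\ref{l3.1}(3) proves $R_{0}$-closure of $\mathfrak{F}$ by a specific argument: for $Q\in\mathrm{Syl}_{q}(G)$ and $N_{1}\cap N_{2}=1$, the subgroups $QN_{1}$ and $QN_{2}$ are $k$-submodular in $G$ by Lemma~\ref{l2.6}(2), their intersection is $k$-submodular by Lemma~\ref{l2.5}(2), and $QN_{1}\cap QN_{2}=Q(N_{1}\cap N_{2})=Q$ by \cite[Theorem~A.6.4(b)]{DH}. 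Your hereditariness and quotient-closure steps are fine and coincide with Lemma~\ref{l3.1}(5) and (2), but the subdirect-product step needs the intersection lemma, not a formal consequence of $S$- and $Q$-closure.
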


For $k=1$, the classes of groups $\mathfrak{K}$ and $\mathfrak{F}$ coincide with the classes $s\mathfrak{U}$ and $sm\mathfrak{U}$, respectively, studied in \cite{VVA2015}. Theorems~A and C from \cite{VVA2015} follow from Theorems~\ref{t3.5} and \ref{t3.6}.

\begin{remark}\label{r3}
$\mathfrak{Y}\subset\mathfrak{X}\subset\mathfrak{K}\subset\mathfrak{F}$, where

$\mathfrak{Y}=(G$ $|$ all subgroups of $G$ are $k$-submodular in $G)$,

$\mathfrak{X}=(G$ $|$ all maximal subgroups of $G$ are $k$-submodular in $G)$,

$\mathfrak{K}=(G$ $|$ $G$ is supersoluble and all Sylow subgroups of $G$ are $k$-submodular in $G)$,

$\mathfrak{F}=(G$ $|$ all Sylow subgroups of $G$ are $k$-submodular in $G)$.
\end{remark}

Groups with $\mathfrak{U}_{k}$-subnormal subgroups were studied in \cite{MonSok_Sib_el23}. Here $\mathfrak{U}_{k}$ is the class of all supersoluble groups in which exponents are not divided by the (k+1)th powers of primes.
If $G$ is a soluble group, then an $k$-submodular in $G$ subgroup is  $\mathfrak{U}_{k}$-subnormal in $G$ (Lemma \ref{l2.7}). The converse statement does not hold in the general case.

\begin{example} \label{e3.3}
Let $k=n=1$ and let $G=\langle x, y$ $|$ $x^{7}=y^{6}=1, xy=yx^{5}\rangle$ be the holomorph of the cyclic group $X= \langle x\rangle$. Since $\mathrm{Core}_{G}(Y)=1$ and $G\in\mathfrak{U}_{1}$ it follows that $Y=\langle y\rangle$  is $\mathfrak{U}_{1}$-subnormal in $G$. But $Y$ is not 1-submodular in $G$ because $|G|=|G/\mathrm{Core}_{G}(Y)|=2\cdot3\cdot7$.
\end{example}

At the same time, the following statement takes place.

\begin{proposition} \label{p3.1}
$(1)$ $\mathfrak{K}= \mathfrak{U}\cap\mathrm{w}\mathfrak{U}_{k}$, where
$\mathfrak{U}\cap\mathrm{w}\mathfrak{U}_{k}$ is the class of all supersoluble groups $G$ in which any Sylow subgroup of $G$ is $\mathfrak{U}_{k}$-subnormal.

$(2)$ $\mathfrak{F}= \mathrm{w}\mathfrak{K}$, where
$\mathrm{w}\mathfrak{K}$ is the class of all groups $G$ in which any
Sylow subgroup of $G$ is $\mathfrak{K}$-subnormal.
\end{proposition}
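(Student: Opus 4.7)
Both equalities are proved by double inclusion. In each part one inclusion is a near-immediate consequence of Lemma~\ref{l2.7} applied to the defining chain of $k$-submodularity, while the reverse inclusion is the crux: it requires turning a chain witnessing $\mathfrak{U}_{k}$- or $\mathfrak{K}$-subnormality into a chain witnessing $k$-submodularity.

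For (1), the inclusion $\mathfrak{K}\subseteq\mathfrak{U}\cap\mathrm{w}\mathfrak{U}_{k}$ is immediate: every $G\in\mathfrak{K}$ is supersoluble (hence in $\mathfrak{U}$ and soluble), and Lemma~\ref{l2.7} converts the $k$-submodularity of each Sylow subgroup into $\mathfrak{U}_{k}$-subnormality. For the reverse inclusion, I would take $G\in\mathfrak{U}\cap\mathrm{w}\mathfrak{U}_{k}$ together with a Sylow subgroup $P$ and a chain $P=H_{0}\leq H_{1}\leq\cdots\leq H_{m}=G$ witnessing $\mathfrak{U}_{k}$-subnormality, then refine it through maximal subgroups. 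At a maximal step $H_{i-1}\lessdot H_{i}$ supersolubility forces $|H_{i}:H_{i-1}|$ to be a prime $r$, and the quotient $H_{i}/\mathrm{Core}_{H_{i}}(H_{i-1})$ is a primitive supersoluble group of shape $V\rtimes K$ with $|V|=r$ and $K$ cyclic isomorphic to $H_{i-1}/\mathrm{Core}_{H_{i}}(H_{i-1})$. The $\mathfrak{U}_{k}$-hypothesis restricts every prime-power factor of $|K|$ to be at most $q^{k}$, which lets me break the step into a sequence of $k$-submodular pieces by inserting the intermediate subgroups given by the primary decomposition of $K$.

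For (2), the inclusion $\mathfrak{F}\subseteq\mathrm{w}\mathfrak{K}$: for $G\in\mathfrak{F}$ and a Sylow subgroup $P$, each non-normal step of its $k$-submodular chain has quotient of order $pq^{n}$ with $n\leq k$ and non-nilpotent, and I would verify that such a quotient lies in $\mathfrak{K}$ by exhibiting a $k$-submodular chain for its Sylow $p$-subgroup along a chief series $1<Q_{1}<\cdots<Q_{n}=Q$ of its normal Sylow $q$-subgroup. For the reverse inclusion I would start from a Sylow $P$ with $\mathfrak{K}$-subnormal chain $P=H_{0}\leq\cdots\leq H_{m}=G$; at each step with $C_{i}=\mathrm{Core}_{H_{i}}(H_{i-1})$ the group $PC_{i}/C_{i}$ is a Sylow of $H_{i}/C_{i}\in\mathfrak{K}$, hence $k$-submodular in $H_{i}/C_{i}$ by the definition of $\mathfrak{K}$, so lifting its witnessing chain returns $PC_{i}$ as $k$-submodular in $H_{i}$. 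An induction on $m$, using the observation $PC_{1}=P$ (since $C_{1}\leq P$) to start, then stitches the local chains into a single global $k$-submodular chain from $P$ to $G$.

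The hard part will be the reverse direction of (1): I must decompose a maximal step whose quotient $V\rtimes K$ has $K$ cyclic but of non-prime-power order into a $k$-submodular sub-chain whose individual steps each fit the shape $pq^{n}$ required by Definition~\ref{d1}. I plan to exploit that $K$ splits as a direct product of its Sylow subgroups and to lift the corresponding chain of $K$-invariant subgroups into the full group $H_{i}$. This is exactly the place where the formation structure of $\mathfrak{K}$ supplied by Theorem~\ref{t3.5} and the subnormality machinery of \cite{MonSok_Sib_el23} enter critically into the proof.
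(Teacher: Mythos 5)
Your treatment of part (2) is essentially sound and in fact more direct than the paper's: the paper proves both inclusions of $\mathfrak{F}=\mathrm{w}\mathfrak{K}$ by minimal counterexample (using saturation of $\mathrm{w}\mathfrak{K}$ via Lemma~\ref{l1.10}), whereas your stitching argument — $H_i^{\mathfrak{K}}\leq C_i=\mathrm{Core}_{H_i}(H_{i-1})$, so $H_i/C_i\in\mathfrak{K}$, so $PC_i$ is $k$-submodular in $H_i$ by Lemma~\ref{l2.6}(2), then combine Lemma~\ref{l2.5}(1) with Lemma~\ref{l2.4}(2) inductively — goes through, and the forward inclusion is a routine $\mathfrak{K}$-analogue of Lemma~\ref{l2.7}. (Minor slip there: in the order-$pq^{n}$ quotient of a non-normal step it is the Sylow $p$-subgroup of order $p$ that is normal and the cyclic $q$-complement that has trivial core, by Lemma~\ref{l2.2}; the verification that this quotient lies in $\mathfrak{K}$ is immediate once the roles are stated correctly.)

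The genuine gap is the reverse inclusion of part (1), which you yourself identify as the crux. Your plan — refine a chain witnessing $\mathfrak{U}_{k}$-subnormality of a Sylow subgroup through maximal subgroups and then repair each maximal step $H_{i-1}\lessdot H_{i}$ whose primitive quotient is $V\rtimes K$ with $K$ cyclic of non-prime-power order "by inserting the intermediate subgroups given by the primary decomposition of $K$" — cannot work: since $H_{i-1}$ is maximal in $H_{i}$, there are no subgroups strictly between them to insert, and by Lemma~\ref{l2.2} a maximal subgroup is $k$-submodular only if the quotient over its core is biprimary of order $pq^{n}$, $n\leq k$; a step with $|\pi(K)|\geq 2$ is therefore simply not $k$-submodular, and no local repair of that link is possible. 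Such steps genuinely occur for Sylow subgroups: for $k=1$ and $G$ the holomorph of $Z_{7}$ (Example~\ref{e3.3}), the Sylow $2$-subgroup $P=\langle y^{3}\rangle$ has the $\mathfrak{U}_{1}$-subnormal chain $P\leq\langle y\rangle\leq G$ consisting of maximal steps, and the step $\langle y\rangle\lessdot G$ has $K\cong Z_{6}$. Tellingly, your argument never uses that $P$ is a Sylow subgroup, so if it worked it would show that every $\mathfrak{U}_{k}$-subnormal subgroup of a group in $\mathfrak{U}\cap\mathrm{w}\mathfrak{U}_{k}$ is $k$-submodular — which Example~\ref{e3.3} refutes ($\langle y\rangle$ itself is $\mathfrak{U}_{1}$-subnormal but not $1$-submodular, while $P$ is $1$-submodular via the different chain $P\leq Z_{7}P\leq G$). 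The inclusion must be proved globally, not chain-by-chain: the paper takes a minimal counterexample, reduces to the primitive case $G=QN$ with $Q$ a Sylow subgroup that is maximal with trivial core (so the quotient is automatically biprimary), and obtains the crucial exponent bound $n\leq k$ from Lemma~\ref{l1.6}(3) ($G\in\mathrm{w}\mathfrak{U}_{k}$ implies $G/F(G)\in\mathcal{A}_{k}$), then concludes with Lemma~\ref{l2.2}. Your sketch invokes Theorem~\ref{t3.5} and the machinery of \cite{MonSok_Sib_el23} only as a vague hope at exactly this point, so the essential idea is missing.
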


An application of the results obtained above to the products of groups was found.

\begin{theorem} \label{t3.6_1}
Suppose that a group $G = AB$ is the product of nilpotent groups $A$ and $B$. If $A$ and $B$ are $k$-submodular in $G$ then $G$ is supersoluble and every Sylow subgroup is $k$-submodular in $G$.
\end{theorem}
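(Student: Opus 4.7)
The starting point is the Kegel--Wielandt theorem: a finite group which is the product of two nilpotent subgroups is soluble. Once solvability of $G$ is in hand, Lemma~\ref{l2.7} upgrades the hypothesis ``$A$ and $B$ are $k$-submodular in $G$'' to ``$A$ and $B$ are $\mathfrak{U}_{k}$-subnormal in $G$''. I will then argue along two parallel tracks: (i) every Sylow subgroup of $G$ is $k$-submodular in $G$ (i.e.\ $G\in\mathfrak{F}$), and (ii) $G$ is supersoluble; together these place $G$ in $\mathfrak{K}$, which is exactly what has to be proved.

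For the Sylow structure, fix a prime $p$. Since $A$ is nilpotent, the Sylow $p$-subgroup $A_{p}$ of $A$ is characteristic, hence normal, hence trivially $1$-submodular (so $k$-submodular) in $A$; combined with $A$ being $k$-submodular in $G$, the transitivity of $k$-submodularity yields that $A_{p}$ is $k$-submodular in $G$, and likewise for the unique Sylow $p$-subgroup $B_{p}$ of $B$. By Wielandt's theorem on factorizations, some Sylow $p$-subgroup $P$ of $G$ satisfies $P=(P\cap A)(P\cap B)$, and since $A_{p}$, $B_{p}$ are the unique choices in the nilpotent groups $A$, $B$, this forces $P=A_{p}B_{p}=\langle A_{p},B_{p}\rangle$. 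I then need to show that this $p$-group is itself $k$-submodular in $G$. \emph{This is the main technical obstacle}: a join-type lemma is required to the effect that the join of two $k$-submodular subgroups whose join is a $p$-subgroup of a soluble group is again $k$-submodular. Such a statement should follow from the lattice behaviour of $n$-modularly embedded subgroups, in the spirit of the classical fact that the join of modular subgroups is modular under suitable conditions, and I expect to extract it from the preliminary lemmas developed earlier in the paper.

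For supersolubility I proceed by induction on $|G|$. For a minimal normal subgroup $N$ of $G$, the quotient $G/N=(AN/N)(BN/N)$ is again a product of nilpotent subgroups that are $k$-submodular in $G/N$ (images of $k$-submodular subgroups under epimorphisms remain $k$-submodular, by standard properties). The inductive hypothesis then gives that $G/N$ is supersoluble with every Sylow subgroup $k$-submodular. It remains to show that $|N|$ is prime. Since $G$ is soluble, $N$ is an elementary abelian $q$-group for some prime $q$, and the action of $G$ on $N$ factors through the actions induced by Sylow subgroups of $A$ and of $B$, each of which is $k$-submodular in $G$ and hence $\mathfrak{U}_{k}$-subnormal in $G$ by Lemma~\ref{l2.7}; this will restrict the $G$-module $N$ to dimension one over $\mathbb{F}_{q}$, i.e.\ $|N|=q$.

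Combining the two tracks gives $G\in\mathfrak{K}$ and the theorem follows. The decisive step, and the one I expect to require the most care, is the join lemma used to pass from the two commuting $k$-submodular Sylow pieces $A_{p}$, $B_{p}$ to the Sylow $p$-subgroup $P=A_{p}B_{p}$ of $G$; the induction for supersolubility is largely routine once this lemma and the standard epimorphism and transitivity properties of $k$-submodularity are available.
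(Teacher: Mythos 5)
Your own flagged ``join lemma'' is a genuine gap, not something that can be extracted from the paper's preliminaries. The toolkit developed for $k$-submodular subgroups (Lemmas~\ref{l2.4}--\ref{l2.6}) covers conjugates, transitivity, intersections and behaviour modulo normal subgroups, but there is no statement about joins or products: nothing in the paper lets you pass from ``$A_{p}$ and $B_{p}$ are $k$-submodular in $G$'' to ``$P=A_{p}B_{p}\in\mathrm{Syl}_{p}(G)$ is $k$-submodular in $G$'', and such a join statement is not a routine consequence of the definition (joins of modular or submodular subgroups are delicate even classically). Your second track is also not closed: the assertion that $\mathfrak{U}_{k}$-subnormality of the Sylow subgroups of $A$ and $B$ ``restricts the $G$-module $N$ to dimension one'' is stated, not proved, and Example~\ref{e3.3} shows that $\mathfrak{U}_{k}$-subnormality is strictly weaker than $k$-submodularity, so an argument is really needed here. (For supersolubility alone there is a cleaner route you did not take: in the soluble group $G$ every link of a $k$-submodular chain is normal or of prime index, so $A$ and $B$ are $\mathbb{P}$-subnormal and Lemma~\ref{l1.4} applies directly.)

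The paper avoids both difficulties by a minimal-counterexample argument that never forms joins of $k$-submodular subgroups. For a minimal normal subgroup $N$, the quotient $G/N$ lies in $\mathfrak{K}$ by induction, and since $\mathfrak{K}$ is a saturated formation (Theorem~\ref{t3.5}) one gets $\Phi(G)=1$, $N$ the unique minimal normal subgroup and $N=C_{G}(N)$ a $p$-group (Lemma~\ref{l1.1}). Heineken's lemma (Lemma~\ref{l1.7}) then gives $A\cap B=1$ and, say, $N\leq A$ with $A$ a $p$-group and $B$ a $p'$-group. Lemma~\ref{l2.8} (a $k$-submodular Sylow subgroup for the largest prime divisor is normal) excludes the case where the largest prime $q$ differs from $p$, and for $q=p$ it makes $A$ normal, whence $A=N\in\mathrm{Syl}_{p}(G)$ and $B$ is a core-free maximal subgroup; Definition~\ref{d1} then forces $|G|=pq^{n}$ with $n\leq k$, so $G\in\mathfrak{K}$, contradicting the choice of $G$. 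To repair your proposal you would either have to actually prove the join lemma you postulate (which the paper does not provide) or switch to a reduction of this kind, where the factorization collapses to $A=N$ and $B$ maximal before any Sylow subgroup other than $A$ itself has to be handled.
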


\section{Preliminaries}

We use the notation and terminology from \cite{DH, BalEzq}.
We recall some concepts significant in the paper.

Let $G$ be a group. If $H$ is a subgroup of $G$, we write $H\leq G$ and if $H\not= G$,
we write $H < G$.
We denote by $|G|$ the order of $G$;
by $\pi(G)$ the set of all distinct prime divisors of the order of $G$;
by ${\rm {Syl}}_p(G)$ the set of all Sylow $p$-subgroups of $G$;
by ${\rm {Syl}}(G)$ the set of all Sylow subgroups of $G$;
by $F(G)$ the Fitting subgroup of $G$;
by $|G:H|$ the index of $H$ in $G$;
by $\pi(G:H)$ the set of all different prime divisors of $|G:H|$;
by $Z_n$ the cyclic group of order $n$;
by $\mathbb{P}$ the set of all primes;
by $\mathbb{N}$ the set of all natural numbers;
by $\mathfrak{S}$ the class of all soluble groups;
by $\mathfrak{U}$ the class of all supersoluble groups;
by $\mathfrak{N}$ the class of all nilpotent groups;
by $\mathfrak{A}(p-1)$ the class of all abelian groups of exponent dividing $p-1$;
by $\mathfrak{U}_{k}$ the class of all supersoluble groups in which exponents are not divided by the $(k + 1)$th powers of primes where $k\in\mathbb{N}$;
by $\mathcal{A}_{k}$ the class of all groups with abelian  Sylow subgroups in which exponents are not divided by the $(k + 1)$th powers of primes where $k\in\mathbb{N}$.

A group $G$ of order $p_1^{\alpha_1}p_2^{\alpha_2}\cdots p_n^{\alpha_n}$
(where $p_1>p_2>\cdots >p_n$ and $p_i$ is a prime, $i=1, 2,\ldots, n$) is called {\sl Ore dispersive} 
or {\sl Sylow tower group} whenever $G$ has a normal subgroup of order $p_1^{\alpha_1}p_2^{\alpha_2}\cdots p_i^{\alpha_i}$
for every $i=1, 2,\ldots, n$.

A class of groups $\mathfrak{H}$ is called:
{\sl a homomorph} if from $G\in\mathfrak{H}$ and $N\unlhd G$ it follows that $G/N\in\mathfrak{H}$;
{\sl hereditary} if together with $G\in\mathfrak{H}$ all subgroups of $G$ belong to $\mathfrak{H}$;
{\sl saturated} if from $G/\Phi(G)\in\mathfrak{H}$ it follows that $G\in\mathfrak{H}$;
{\sl primitive closed} if from $G/\mathrm{Core}_{G}(M)\in\mathfrak{H}$ for any maximal subgroup $M$ of $G$ it follows that $G\in\mathfrak{H}$;
{\sl a formation} if $\mathfrak{H}$ is a homomorph and, for $N_{1}\unlhd G$ and $N_{2}\unlhd G$,
a group $G/(N_{1}\cap N_{2})\in\mathfrak{H}$ whenever $G/N_{i}\in \mathfrak{F}$,
$i=1, 2$; 
{\it a Schunck class} if $\mathfrak{H}$ is a primitive closed homomorph.

A subgroup $H$ of a group $G$ is said to be:

{\sl $\mathbb{P}$-subnormal} in $G$ \cite{VasVasTyu2010} if either $H= G$ or there exists a chain of subgroups (\ref{1_1})
such that $|H_{i} : H_{i-1}|$ is a prime for every $i = 1,\ldots , m$;
{\sl $\mathrm{K}$-$\mathbb{P}$-subnormal} in $G$ \cite{VasVasTyu2014} if there exists a chain of subgroups (\ref{1_1})
such that either $H_{i-1}\unlhd H_{i}$ or $|H_{i} : H_{i-1}|$ is a prime for every $i = 1,\ldots , m$.

Let $\mathfrak{F}$ be a non-empty formation. A subgroup $H$ of a group $G$ is said to be:
{\sl $\mathfrak{F}$-subnormal} in $G$ \cite{BalEzq} if either $H=G$ or there exists a chain of subgroups (\ref{1_1})
such that $H_{i}^{\mathfrak{F}} \leq H_{i-1}$ for every $i = 1,\ldots , m$;
{\sl $\mathrm{K}$-$\mathfrak{F}$-subnormal} in $G$ \cite{BalEzq} if there exists a chain of subgroups (\ref{1_1})
such that either $H_{i-1}\unlhd H_{i}$ or $H_{i}^{\mathfrak{F}} \leq H_{i-1}$ for every $i = 1,\ldots , m$.
Here $G^{\mathfrak{F}}$ is the least normal subgroup of $G$ for which $G/G^{\mathfrak{F}}\in \mathfrak{F}$.
If a subgroup $H$ of a group $G$ is $\mathfrak{F}$-subnormal in $G$, we use the designation $H$ $\mathfrak{F}$-$sn$ $G$.
A class $\mathrm{w}\mathfrak{F}=(G$ $|$ $\pi(G)\subseteq\pi(\mathfrak{F})$ and $H$ $\mathfrak{F}$-$sn$ $G$ for all $H\in\mathrm{Syl}(G))$.

A function $f:\Bbb{P}\rightarrow \{$formations$\}$ is called a {\sl formation function}.
A formation $\mathfrak{F}$ is called {\sl local},
if there exists a formation function $f$ such that $\mathfrak{F}=LF(f)=(G | G/C_{G}(H/K)\in f(p)$ for every chief factor $H/K$ of a group $G$ and for all primes $p\in\pi(H/K))$.

\medskip

\begin{lemma}\cite[Theorem A.15.6]{DH} \label{l1.1}
Let  $G$ be a soluble group $G$ with a maximal subgroup $M$ such that $\mathrm{Core}_{G}(M)=1$.

$(1)$ $G$ has a unique minimal normal subgroup $N$, $G=NM$, $N\cap M=1$ and $N=C_{G}(N)=F(G)$.

$(2)$ If $p\in\pi(N)$ then $O_{p}(M)=1$.

$(3)$ All complements to $N$ in $G$ are conjugate to $M$.
\end{lemma}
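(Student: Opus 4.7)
The plan is to exploit the fact that the hypothesis makes $G$ a primitive soluble group, so the structure theorem for primitive soluble groups applies. The universal tool will be Dedekind's modular law together with the triviality of $\mathrm{Core}_{G}(M)$: any $M$-invariant subgroup that happens also to be normalised by some normal complement becomes $G$-normal and so is forced to be trivial.

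For part (1), I would pick any minimal normal subgroup $N$ of $G$; solubility makes it elementary abelian. Since $N\not\leq M$ (else $N\leq\mathrm{Core}_{G}(M)=1$), maximality of $M$ gives $G=NM$, and the Dedekind/core trick applied to $N\cap M$ (normalised by $M$ and, since $N$ is abelian, by $N$) yields $N\cap M=1$. The same trick applied to $C_{G}(N)\cap M$, using $C_{G}(N)=N(C_{G}(N)\cap M)$, gives $C_{G}(N)=N$. Uniqueness then follows because any second minimal normal subgroup $N'$ would have trivial intersection with $N$, forcing $N\leq C_{G}(N')=N'$, a contradiction. To identify $F(G)$ with $N$, I would use that $N$ is a non-trivial normal subgroup of the nilpotent group $F(G)$, so $N\cap Z(F(G))\neq 1$; this intersection is $G$-normal in $N$, hence equals $N$ by minimality, giving $N\leq Z(F(G))$ and therefore $F(G)\leq C_{G}(N)=N$.

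Part (2) is then immediate: if $p\in\pi(N)$ and $O_{p}(M)\neq 1$, the product $P:=NO_{p}(M)$ is a $p$-subgroup of $G$ (an extension of a $p$-group by a $p$-group), normalised by $M$ and by $N$, hence normal in $G$; so $P\leq O_{p}(G)\leq F(G)=N$, which forces $O_{p}(M)\leq N\cap M=1$.

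For part (3), any complement $K$ to $N$ is itself a maximal subgroup of $G$ with trivial core: for maximality, any $K\leq L\leq G$ satisfies $L=K(L\cap N)$ with $L\cap N$ a $G$-normal subgroup of $N$, so by minimality $L=K$ or $L=G$; and $\mathrm{Core}_{G}(K)\neq 1$ would force the unique minimal normal $N$ into $K$, contradicting $N\cap K=1$. The main obstacle is then the $G$-conjugacy of $K$ with $M$, because $|N|$ and $|M|$ need not be coprime, so Schur--Zassenhaus is unavailable. My plan is to invoke Gasch\"utz's theorem on conjugacy of complements to an abelian normal subgroup, reducing $G$-conjugacy of complements of $N$ in $G$ to the same question inside the normaliser $N_{G}(P)$ of a Sylow $p$-subgroup $P\supseteq N$; the primitivity, via $N=C_{G}(N)$ established in (1), then controls the remaining $p$-local analysis and delivers the desired conjugacy.
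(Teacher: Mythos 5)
Your arguments for parts (1) and (2) are correct and complete: the Dedekind/core trick for $G=NM$, $N\cap M=1$ and $C_G(N)\cap M=1$, the uniqueness of $N$, the identification $F(G)=N$ via $N\cap Z(F(G))$, and the observation that $NO_p(M)$ is a normal $p$-subgroup hence lies in $O_p(G)=N$ are all sound. (For the record, the paper offers no proof of this lemma at all -- it is quoted verbatim from Doerk--Hawkes, Theorem A.15.6 -- so the only question is whether your argument stands on its own.) The first half of part (3), showing that an arbitrary complement $K$ of $N$ is a core-free maximal subgroup, is also fine.

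The genuine gap is the conjugacy claim in (3), which is the real content of the cited theorem. The Gasch\"utz-type reduction you invoke runs in the wrong direction for your purposes: what Gasch\"utz gives (via injectivity of the restriction $H^1(G/N,N)\to H^1(R/N,N)$ for $N\le R$ with $|G:R|$ prime to $p$) is that \emph{if} all complements of $N$ in $R$ are conjugate, \emph{then} all complements of $N$ in $G$ are conjugate. So after reducing to $R=N_G(P)$ or $R=P$ you still owe exactly the statement you set out to prove, now inside $R$, and there the hypothesis can genuinely fail: for $G=\mathbb{F}_3^2\rtimes SL_2(3)$ (soluble, primitive, $N=C_G(N)=\mathbb{F}_3^2$) one has $H^1(P/N,N)\cong\mathbb{Z}/3\neq 0$ for the Sylow $3$-subgroup $P$, i.e.\ the complements of $N$ in $P$ fall into three conjugacy classes, so "conjugacy at the $p$-local level" is not something primitivity hands you, and the closing sentence of your plan is an appeal to an unproved (and at the Sylow level false) statement rather than an argument. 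A standard way to close the gap without cohomology: since $C_G(N)=N$ forces $O_p(G/N)=1$, a minimal normal subgroup $A/N$ of $G/N$ is a $q$-group with $q\neq p$; given two complements $H,K$, Schur--Zassenhaus applied in $A$ lets you conjugate $K$ so that $B:=A\cap H=A\cap K$; then $B\neq 1$ is not normal in $G$ (else $B\le F(G)=N$), both $H$ and $K$ normalize $B$, and $L:=N_G(B)$ satisfies $G=NL$ with $L\cap N=1$ by the same core argument as in (1), whence $H=L=K$ by order comparison. Some such argument (or an explicit appeal to the vanishing theorem $H^1(G/N,N)=0$ for faithful irreducible modules of soluble groups, with proof or precise citation) is needed; as written, part (3) is not proved.
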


\begin{lemma} \cite[Lemma 2]{Hei} \label{l1.7}
Suppose that $G = AB$ is the product of nilpotent subgroups $A$ and $B$,
while $G$ has a minimal normal subgroup $N$ such that $N = C_{G}(N) \not= G$. Then

$(1)$ $A\cap B = 1$;

$(2)$ $N\leq A \cup B$;

$(3)$ if $N\leq A$ then $A$ is a $p$-group for some prime $p$ and $B$ is a $p'$-group.
\end{lemma}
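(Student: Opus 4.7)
The plan is to exploit the primitivity hypothesis $N = C_G(N) \neq G$ together with the product structure $G = AB$. First I would record the structural reductions: because $G = AB$ is a product of two nilpotent groups, the Kegel--Wielandt theorem gives that $G$ is soluble; because $N$ is minimal normal with $N \leq C_G(N) = N$, $N$ is abelian, hence an elementary abelian $p$-group for some prime $p$; and because $C_G(F(G)) \leq F(G)$ in soluble groups, $F(G) \leq C_G(N) = N$ forces $F(G) = N$, so $N$ is the unique minimal normal subgroup of $G$, and $O_{p'}(G) = 1$, $O_p(G) = N$.

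Next I would prove the easy half of (3): if $N \leq A$, write $A = A_p \times A_{p'}$ by nilpotence; since $A_{p'}$ commutes elementwise with $A_p \supseteq N$, we have $A_{p'} \leq C_G(N) = N$, and being a $p'$-subgroup of the $p$-group $N$ it must be trivial, so $A = A_p$ is a $p$-group. I would then use this immediately to establish that $\mathrm{Core}_G(A \cap B) = 1$: any nontrivial normal subgroup of $G$ contains the unique minimal normal subgroup $N$, so a nontrivial core would place $N$ inside both $A$ and $B$; the $p$-group half of (3) applied to each factor would then make $G$ itself a $p$-group, in which the minimal normal $N$ would lie in $Z(G)$, contradicting $C_G(N) = N \neq G$.

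To upgrade trivial core to (1), i.e.\ $A \cap B = 1$, I would argue prime by prime: because $A$ and $B$ are nilpotent with characteristic Sylow subgroups $A_r, B_r$, the Sylow $r$-subgroup of $A \cap B$ equals $A_r \cap B_r$, and a Kegel--Wielandt type factorization in $G = AB$ (a Sylow $r$-subgroup of $G$ can be taken of the form $A_r \cdot B_r^x$) combined with the triviality of the core forces $A_r \cap B_r = 1$ for every prime $r$, hence $A \cap B = 1$. With (1) in hand, the remaining half of (3) (that $B$ is a $p'$-group) follows by assuming $B_p \neq 1$, noting that $N B_p$ is a $p$-subgroup of $G$ strictly larger than $N$ (since $N \cap B_p \leq A \cap B = 1$), and combining $N B_p$ with $A$ via the same Sylow factorization to produce a Sylow $p$-subgroup $P$ of $G$ with $A \lneq P$; normality of $N$ in the $p$-group $P$ then forces nontrivial centralizers of $N$ lying outside $N$, contradicting $C_G(N) = N$.

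Part (2) should then follow by writing $x \in N$ uniquely as $x = ab$ (by (1)), splitting $a$ and $b$ into $p$- and $p'$-components using nilpotence, and using $O_{p'}(G) = 1$ together with the faithful action of $G/N$ on $N$ to force the $p'$-component on one side to vanish, placing $x$ in $A$ or in $B$. The main obstacle will be the Sylow factorization input invoked in the proofs of (1) and in the second half of (3): converting the Kegel--Wielandt structural data on Sylow subgroups of the factorized soluble group $G = AB$ into the precise intersection and containment statements of the lemma, using the primitivity $C_G(N) = N \neq G$, is the technical heart of the argument.
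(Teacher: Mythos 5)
The paper does not prove this lemma at all: it imports it verbatim from Heineken \cite[Lemma 2]{Hei}, so the benchmark is Heineken's published argument, not anything in this text. Measured on its own terms, your proposal gets the preparatory layer right — solubility of $G$ via Kegel--Wielandt, $N$ elementary abelian, $F(G)=N=O_p(G)$ the unique minimal normal subgroup, the first half of (3) via $A=A_p\times A_{p'}$ and $A_{p'}\leq C_G(N)=N$, and $\mathrm{Core}_G(A\cap B)=1$ — but none of the three substantive conclusions is actually established. For (1), the inference ``Sylow factorization $G_r=A_r B_r^{x}$ plus triviality of the core forces $A_r\cap B_r=1$'' is a non sequitur: a subgroup with trivial core need not be trivial, and nothing in your setup bridges that gap. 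The standard bridge in this literature is Pennington's theorem that $A\cap B\leq F(G)$ in a product of nilpotent groups, which here yields $A\cap B\leq N$; you never invoke it, and even granted it, more work remains to reach $A\cap B=1$. Note also the circularity pressure in your architecture: your second half of (3) and your (2) both consume (1), while (1) is precisely the step left unproven — you yourself flag this ``Sylow factorization input'' as the main obstacle, which is an admission that the technical heart of the lemma is absent.

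One step is outright false as stated. In the second half of (3) you claim that normality of $N$ in a $p$-group $P$ with $A\lneq P$ ``forces nontrivial centralizers of $N$ lying outside $N$.'' Abelian normal subgroups of $p$-groups can be self-centralizing: in the dihedral group $D$ of order $8$, the cyclic subgroup $C\cong Z_4$ satisfies $C\unlhd D$ and $C_{D}(C)=C\lneq D$. Worse for your argument, the configuration you aim to contradict actually occurs under the lemma's hypotheses: $G=S_4$ factorizes as $A\cdot B$ with $A\in\mathrm{Syl}_2(G)$ dihedral of order $8$ and $B\cong Z_3$, $N=C_G(N)$ the Klein four-group, and here the Sylow $2$-subgroup $P=A$ strictly contains $N$ with $C_P(N)=N$ and no contradiction; so whatever rules out $B_p\neq 1$ must come from the interplay of $A\lneq P$ with the factorization itself, which you have not exploited. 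Finally, the elementwise argument for (2) — write $x=ab$ uniquely by (1), split into $p$- and $p'$-components, and ``force the $p'$-component on one side to vanish'' — is not an argument: $a$ and $b$ need not commute with each other or normalize $N$, the components of $a$ and $b$ live in different groups, and no mechanism is offered for why any component vanishes. (The reduction from the elementwise statement $N\subseteq A\cup B$ to $N\leq A$ or $N\leq B$ is fine, since a group is never the union of two proper subgroups, but the elementwise statement itself is unsupported.) In short: correct scaffolding, but all of (1), (2), and the second half of (3) are missing or rest on a false claim, so this is a plan rather than a proof.
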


\begin{lemma}\cite[Theorem VI.9.5]{Hup} \label{l1.2}
A group $G$ is supersoluble if and only if all maximal subgroups of $G$ have a prime index.
\end{lemma}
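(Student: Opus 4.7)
The plan is to prove each implication separately. For the forward direction, suppose $G$ is supersoluble and $M$ is maximal in $G$. I would set $N = \mathrm{Core}_G(M)$ and pass to $\overline{G} = G/N$, which remains supersoluble and in which $\overline{M} = M/N$ is maximal with trivial core. Since $\overline{G}$ is in particular soluble, Lemma \ref{l1.1} applies: $\overline{G}$ has a unique minimal normal subgroup $\overline{L}$, with $\overline{G} = \overline{L}\,\overline{M}$ and $\overline{L} \cap \overline{M} = 1$. As $\overline{L}$ is a chief factor of the supersoluble group $\overline{G}$, it is cyclic of some prime order $p$, whence $|G:M| = |\overline{L}| = p$.

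For the reverse direction, assume that every maximal subgroup of $G$ has prime index. I would first invoke Huppert's solvability criterion (a finite group in which every maximal subgroup has prime-power index is soluble) to deduce that $G$ is soluble, since prime index is the special case with exponent one. Then I would induct on $|G|$. Let $N$ be a minimal normal subgroup of $G$; by solubility, $N$ is an elementary abelian $p$-group for some prime $p$. The hypothesis passes to $G/N$, because its maximal subgroups correspond bijectively to maximal subgroups of $G$ containing $N$ with the same index, so by induction $G/N$ is supersoluble.

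It then suffices to show $|N| = p$, for then splicing a supersoluble chief series of $G/N$ together with $N$ produces one for $G$. If $N \leq \Phi(G)$, then $G/\Phi(G)$, being a quotient of the supersoluble group $G/N$, is itself supersoluble, and saturation of the formation $\mathfrak{U}$ forces $G \in \mathfrak{U}$. Otherwise some maximal subgroup $M$ of $G$ does not contain $N$, so $G = NM$, and $N \cap M$ is normalized by $N$ (abelian) and by $M$, hence by all of $G$; minimality of $N$ now forces $N \cap M = 1$, so $|N| = |G:M|$ is prime by hypothesis. The main obstacle is the solubility step, which genuinely requires Huppert's nontrivial criterion; the remainder is a fairly standard induction that combines the structure of soluble primitive groups (Lemma \ref{l1.1}) with the saturation of $\mathfrak{U}$.
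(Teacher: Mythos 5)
Your forward direction (pass to $G/\mathrm{Core}_G(M)$, apply Lemma \ref{l1.1}, and use that the unique minimal normal subgroup of a supersoluble primitive group has prime order) is sound, as is the inductive part of the converse once solubility is in hand. The genuine gap is the solubility step itself: the criterion you invoke, ``a finite group in which every maximal subgroup has prime-power index is soluble,'' is false. The simple group $\mathrm{PSL}(2,7)$ of order $168$ has exactly two classes of maximal subgroups, of index $7$ and of index $8=2^{3}$, so all of its maximal subgroups have prime-power index, yet it is not soluble. The correct results in this neighbourhood are Hall's theorem (all maximal subgroups of index $p$ or $p^{2}$ force solubility) and Huppert's theorem itself; general prime-power index does not suffice, so your ``special case with exponent one'' reduction rests on a non-theorem. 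Since you yourself identify solubility as the main obstacle, this is exactly the point where the proof as written fails. Note also that the paper offers no proof to compare with: Lemma \ref{l1.2} is quoted verbatim from Huppert [Theorem VI.9.5], so any argument you supply must stand on its own or on correct citations.

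The standard repair is either to cite Hall's $p$-or-$p^{2}$ criterion, or to argue directly as in Huppert's original proof: let $p$ be the largest prime dividing $|G|$ and $P\in\mathrm{Syl}_{p}(G)$. If $N_{G}(P)\neq G$, choose a maximal subgroup $M$ with $N_{G}(P)\leq M$. Then $N_{M}(P)=N_{G}(P)$, and Sylow's theorem gives $|G:N_{G}(P)|\equiv 1 \pmod p$ and $|M:N_{G}(P)|\equiv 1\pmod p$; since $|G:N_{G}(P)|=|G:M|\,|M:N_{G}(P)|$, it follows that $|G:M|\equiv 1\pmod p$. By hypothesis $|G:M|=q$ is prime, so $q>p$, contradicting the choice of $p$. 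Hence $P\unlhd G$; the maximal subgroups of $G/P$ correspond to maximal subgroups of $G$ containing $P$ and so have prime index, whence $G/P$ is soluble by induction, and $G$ is soluble because $P$ is. With solubility established this way, the rest of your induction (the case $N\leq\Phi(G)$ via saturation of $\mathfrak{U}$, and the complemented case giving $|N|=|G:M|$ prime) goes through as you wrote it.
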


\begin{lemma}\cite[Example IV.3.4(f), Theorem VII.2.1]{DH} \label{l1.3}
$(1)$ If $G\in \mathfrak{U}$, then $G$ is Ore dispersive and has nilpotent commutator subgroup.

$(2)$ The class $\mathfrak{U}$ is a hereditary saturated formation and has a formation function $f$ such that $f(p) = \mathfrak{A}(p - 1)$ for every prime $p$.
\end{lemma}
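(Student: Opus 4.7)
My plan is to prove part (1) by chief-series bookkeeping, and part (2) by identifying $\mathfrak{U}$ with the local formation $\mathrm{LF}(f)$ for $f(p)=\mathfrak{A}(p-1)$ and then invoking the Gasch\"utz--Lubeseder--Schmid theorem to obtain saturation, with hereditariness established by a short separate induction.

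For part (1), Ore dispersivity follows by induction on $|G|$: take a minimal normal subgroup $N$, necessarily cyclic of prime order by supersolubility; the quotient $G/N$ is supersoluble and hence Ore dispersive by the inductive hypothesis, and pulling back the normal Sylow subgroup for the largest prime divisor $p_1$ of $|G|$ yields a normal subgroup of $G$ of the required order, after which iterating through the decreasing chain $p_1 > p_2 > \cdots$ completes the tower. For the nilpotent commutator subgroup, fix a chief series $1 = G_0 < G_1 < \cdots < G_m = G$ with each $|G_i / G_{i-1}| = p_i$ prime, and set $C_i = C_G(G_i / G_{i-1})$; each $G/C_i$ embeds into $\mathrm{Aut}(Z_{p_i}) \cong Z_{p_i - 1}$ and hence is abelian, so $G / \bigcap_i C_i$ is abelian and $G' \leq \bigcap_i C_i = F(G)$, the last equality being the standard identification of the Fitting subgroup of a soluble group with the intersection of chief-factor centralizers along a chief series.

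For part (2), the inclusion $\mathfrak{U} \subseteq \mathrm{LF}(f)$ is immediate from the same observation: every chief factor $H/K$ of a supersoluble $G$ has $G/C_G(H/K)$ embedded in $Z_{p-1}$, hence in $\mathfrak{A}(p-1)$. The reverse inclusion is the main content. Given $G \in \mathrm{LF}(f)$, non-abelian chief factors are immediately ruled out, since the inner-automorphism image of a non-abelian $H/K$ inside $G/C_G(H/K)$ would contradict $G/C_G(H/K) \in \mathfrak{A}(p-1)$; consequently $G$ is soluble and every chief factor is elementary abelian of order $p^a$ for some prime $p$.

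The crux is to force $a = 1$. I view $G/C_G(H/K)$ as an abelian, exponent-dividing-$(p-1)$ subgroup of $\mathrm{GL}_a(\mathbb{F}_p)$ acting irreducibly on $(\mathbb{F}_p)^a$; each element has minimal polynomial dividing $x^{p-1} - 1$, which splits with distinct roots over $\mathbb{F}_p$, so every element is diagonalizable over $\mathbb{F}_p$, and commuting diagonalizable elements are simultaneously diagonalizable, decomposing $(\mathbb{F}_p)^a$ into one-dimensional invariant subspaces; irreducibility then forces $a = 1$, so every chief factor of $G$ is cyclic of prime order and $G \in \mathfrak{U}$. Saturation of $\mathfrak{U}$ follows from Gasch\"utz--Lubeseder--Schmid. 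Finally, hereditariness is a short induction on $|G|$: for a minimal normal $N$ of prime order and $H \leq G$, the isomorphism $H/(H \cap N) \cong HN/N \leq G/N$ produces a supersoluble quotient of $H$ by induction, and $H \cap N \in \{1, N\}$ either lifts trivially or adjoins a prime-order factor to give a supersoluble chief series of $H$. The main obstacle is the diagonalization step extracting $a = 1$ from the abelian exponent-$(p-1)$ hypothesis, as everything else reduces to routine induction or to the standard local-formation machinery.
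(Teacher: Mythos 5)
The paper offers no proof of this lemma at all: it is quoted verbatim from Doerk--Hawkes \cite[Example IV.3.4(f), Theorem VII.2.1]{DH}, so your blind proof supplies exactly what the paper outsources. Your argument is correct and is essentially the standard textbook route that the cited references themselves take: the identification $\mathfrak{U}=\mathrm{LF}(f)$ with $f(p)=\mathfrak{A}(p-1)$, with the inclusion $\mathfrak{U}\subseteq \mathrm{LF}(f)$ coming from $G/C_G(H/K)\hookrightarrow \mathrm{Aut}(Z_p)\cong Z_{p-1}$, the reverse inclusion from the simultaneous-diagonalization argument (every element of an abelian group of exponent dividing $p-1$ in $\mathrm{GL}_a(\mathbb{F}_p)$ has minimal polynomial dividing $X^{p-1}-1$, which splits with distinct roots over $\mathbb{F}_p$, so irreducibility forces $a=1$), saturation from Gasch\"utz--Lubeseder--Schmid, and $G'\leq \bigcap_i C_G(G_i/G_{i-1})=F(G)$ for the nilpotent derived subgroup. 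One step is glossed too quickly: in the Ore-dispersivity induction, if the minimal normal subgroup $N$ has order $q$ with $q$ strictly smaller than the largest prime $p_1$, then the preimage of the normal Sylow $p_1$-subgroup $PN/N$ of $G/N$ is $PN$, of order $p_1^{\alpha_1}q$ --- \emph{not} ``of the required order'' as you assert. You need the routine extra line: in $PN$ the number of Sylow $p_1$-subgroups divides $q$ and is congruent to $1$ modulo $p_1$, so $q<p_1$ forces $P\unlhd PN$, whence $P$ is characteristic in $PN\unlhd G$ and thus normal in $G$. With that one-line repair, and noting that $\mathfrak{A}(p-1)$ is itself a formation (so that $\mathrm{LF}(f)$ is well defined), your proof is complete; your direct induction for hereditariness is also fine, though it could alternatively be read off from the hereditariness of the $f(p)$ via \cite[Proposition~IV.3.14]{DH}, which is how the paper argues in analogous situations (Theorems \ref{t3.5} and \ref{t3.6}).
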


\begin{lemma}\cite[Theorem 2]{Vas2004} \label{l1.4}
A group $G$ is supersoluble if and only if $G$ can be represented as the product of two nilpotent $\mathbb{P}$-subnormal subgroups.
\end{lemma}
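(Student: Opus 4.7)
The plan is a two-direction argument. The forward direction follows from a Carter factorisation together with the observation that every subgroup of a supersoluble group is $\mathbb{P}$-subnormal. The converse goes by induction on $|G|$: one reduces to a primitive-like configuration using the saturated formation properties of $\mathfrak{U}$, applies Lemma~\ref{l1.7}, and then exploits the $\mathbb{P}$-subnormality of the two factors together with the chain structure to force the unique minimal normal subgroup to have prime order.

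\emph{Forward direction.} Let $G\in\mathfrak{U}$. By Lemma~\ref{l1.3}(1), $G'$ is nilpotent, so the nilpotent residual $G^{\mathfrak{N}}\subseteq G'$ is also nilpotent; pairing it with a Carter subgroup $C$ of $G$ (which exists since $G$ is soluble) yields the factorisation $G=G^{\mathfrak{N}}C$ of $G$ as a product of two nilpotent subgroups. Both are $\mathbb{P}$-subnormal in $G$ because every subgroup $H\leq G$ of a supersoluble group is: iteratively choose a maximal subgroup containing the current one, which has prime index by Lemma~\ref{l1.2} inside the (still supersoluble, by Lemma~\ref{l1.3}(2)) larger term.

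\emph{Converse direction.} Assume $G=AB$ with $A$ and $B$ nilpotent $\mathbb{P}$-subnormal subgroups, and induct on $|G|$. Solubility of $G$ follows from the Kegel--Wielandt theorem. For any $1\neq N\unlhd G$ the quotient $G/N=(AN/N)(BN/N)$ is again a product of nilpotent $\mathbb{P}$-subnormal subgroups (the image of a $\mathbb{P}$-subnormal chain, after collapsing repetitions, is again such a chain), so $G/N\in\mathfrak{U}$ by induction. Saturation of $\mathfrak{U}$ (Lemma~\ref{l1.3}(2)) forces a minimal counterexample $G$ to have $\Phi(G)=1$, and formation closure forces $G$ to have a unique minimal normal subgroup $N$; combined with solubility this gives $F(G)=N=C_{G}(N)$, with $N$ elementary abelian of order $p^{s}$ for some prime $p$. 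Lemma~\ref{l1.7} now applies: after swapping if necessary, $A\cap B=1$, $N\subseteq A$, $A$ is a $p$-group and $B$ is a $p'$-group, so $A\in\mathrm{Syl}_{p}(G)$ and $B$ is a Hall $p'$-subgroup of $G$.

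\emph{Main step and obstacle.} The $\mathbb{P}$-subnormality of $B$, combined with $|G:B|=|A|$ being a $p$-power, forces every prime index appearing in a chain from $B$ up to $G$ to equal $p$; hence there is a maximal subgroup $M\subseteq G$ of index $p$ containing $B$. If $N\not\subseteq M$ then $G=NM$; because $N$ is abelian, $N\cap M$ is normalised by both $N$ and $M$, hence by $G$, and minimality of $N$ gives $N\cap M=1$, so $|N|=|G:M|=p$ and $G$ is supersoluble — contradicting the counterexample hypothesis. The surviving case $N\subseteq M$ is the main obstacle: here one writes $M=(A\cap M)B$, verifies that $A\cap M$ is $\mathbb{P}$-subnormal in $M$ by intersecting the chain for $A$ in $G$ with $M$ and collapsing repetitions, invokes the induction hypothesis to conclude $M$ is supersoluble, and finally rules out $|N|>p$ via a Clifford-type analysis of $N$ as an irreducible $\mathbb{F}_{p}[G]$-module whose restriction to $M$ is a sum of $1$-dimensional components. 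The delicate points are the lattice-theoretic transfer of $\mathbb{P}$-subnormality from $G$ to $M$ and the subsequent module-theoretic step forcing $|N|=p$, both of which close the contradiction and complete the induction.
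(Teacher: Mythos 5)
The paper contains no proof of this lemma at all --- it is imported verbatim from \cite{Vas2004} --- so your attempt has to stand on its own, and it does not. Your forward direction (Carter subgroup times nilpotent residual, every subgroup of a supersoluble group being $\mathbb{P}$-subnormal) is correct, as is the reduction of the converse to a primitive minimal counterexample with $N=C_{G}(N)=F(G)$, the application of Lemma~\ref{l1.7}, the extraction of a maximal subgroup $M\geq B$ with $|G:M|=p$, and the subcase $N\nleq M$. The first genuine defect is your transfer of $\mathbb{P}$-subnormality to $A\cap M$: intersecting a prime-index chain with a subgroup and ``collapsing repetitions'' does \emph{not} produce a prime-index chain. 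If $|H_{i}:H_{i-1}|=r$ is prime, the product formula only gives $|H_{i}\cap K:H_{i-1}\cap K|\leq r$, and this index can be composite even in soluble groups: in the Frobenius group $G$ of order $20$, a Sylow $2$-subgroup $H\cong Z_{4}$ is maximal of index $5$, so $H<G$ is a $\mathbb{P}$-subnormal chain, yet for a second Sylow $2$-subgroup $K$ one has $H\cap K=1$ and $|K:H\cap K|=4$. The inheritance you need ($H$ $\mathbb{P}$-subnormal in a soluble $G$ implies $H\cap K$ $\mathbb{P}$-subnormal in $K$) is true, but only via the equivalence of $\mathbb{P}$-subnormality with $\mathfrak{U}$-subnormality in soluble groups (cf.\ \cite{VasVasTyu2010}) combined with Lemma~\ref{l1.9}(5); as a chain-intersection argument it is false. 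This step is repairable by citation, after which the induction giving $M=(A\cap M)B\in\mathfrak{U}$ does go through.

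The second defect is not repairable as written and sits at the crux. Your ``Clifford-type analysis'' needs $M\unlhd G$, but $M$ is merely maximal of index $p$, and normality cannot be extracted from the index: once $M\in\mathfrak{U}$ is in hand, a Sylow-counting argument shows $p$ is the \emph{largest} prime divisor of $|G|$ (if the largest prime $q$ differed from $p$, the normal Sylow $q$-subgroup of the supersoluble $M$ would be Sylow in $G$ with $n_{q}\in\{1,p\}$ and $n_{q}\equiv 1\ (\mathrm{mod}\ q)$, forcing it normal in $G$ and hence to contain the unique minimal normal $p$-subgroup $N$, a contradiction). Nor is $N$ restricted to $M$ semisimple: $p$ divides $|M|$ (unless $|A|=p$, in which case $N=A$ and you are done anyway), so Maschke is unavailable and supersolubility of $M$ yields only a filtration of $N$ by one-dimensional $M$-modules, not ``a sum of $1$-dimensional components.'' What it does yield is $N_{1}\leq N$ with $|N_{1}|=p$ and $N_{1}\unlhd M$; then $N_{G}(N_{1})\in\{M,G\}$ by maximality, and $N_{G}(N_{1})=G$ forces $N=N_{1}$ and kills the counterexample --- but the case $N_{G}(N_{1})=M$, exactly where your sketch stops, is the real obstacle, and excluding $|N|>p$ there still requires substantive work (for instance, showing $M\cap A$ is normal of index $p$ in $A$ and, by Ore dispersiveness of $M\in\mathfrak{U}$, normal in $M$, hence in $G=\langle A,M\rangle$, whence $M\cap A\leq O_{p}(G)=N$ and $M=N\rtimes B$ with $B$ acting faithfully and, now legitimately by Maschke for the $p'$-group $B$, diagonally on $N$ --- and even then a closing contradiction must still be manufactured). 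So the converse is not closed: you correctly isolated the obstacle but did not overcome it.
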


\begin{lemma} \cite[Lemma~6.1.6]{BalEzq} \label{l1.8}
Assume that $\mathfrak{F}$ is a non-empty formation and $G$ is a group.

$(1)$ If $H$ $\mathfrak{F}$-$sn$ $K$ and $K$ $\mathfrak{F}$-$sn$ $G$ then $H$ $\mathfrak{F}$-$sn$ $G$.

$(2)$ If $N \unlhd G$ and $H/N$ $\mathfrak{F}$-$sn$ $G/N$ then $H$ $\mathfrak{F}$-$sn$ $G$.

$(3)$ If $H$ $\mathfrak{F}$-$sn$ $G$ then $HN/N$ $\mathfrak{F}$-$sn$ $G/N$.
\end{lemma}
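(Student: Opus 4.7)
The plan is to verify the three statements directly from the chain-based definition of $\mathfrak{F}$-subnormality, using one standard auxiliary fact: since $\mathfrak{F}$ is a formation, for any group $X$ and any $N\unlhd X$, the $\mathfrak{F}$-residual commutes with the quotient, i.e.\ $(X/N)^{\mathfrak{F}} = X^{\mathfrak{F}}N/N$. This follows because $X/X^{\mathfrak{F}}N$ is a quotient of $X/X^{\mathfrak{F}}\in\mathfrak{F}$, and by minimality of the residual on $X/N$.

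For part (1), I would just concatenate witnessing chains. Let $H = H_0\leq H_1\leq\cdots\leq H_s = K$ witness $H$ $\mathfrak{F}$-$sn$ $K$, with $H_i^{\mathfrak{F}}\leq H_{i-1}$, and let $K = K_0\leq K_1\leq\cdots\leq K_t = G$ witness $K$ $\mathfrak{F}$-$sn$ $G$, with $K_j^{\mathfrak{F}}\leq K_{j-1}$. Then
\[
H = H_0\leq H_1\leq\cdots\leq H_s = K_0\leq K_1\leq\cdots\leq K_t = G
\]
is a chain of length $s+t$ in which every successive pair satisfies the residual condition, giving $H$ $\mathfrak{F}$-$sn$ $G$.

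For part (2), I would lift a given chain from $G/N$ back to $G$. Write a witnessing chain $H/N = L_0/N\leq L_1/N\leq\cdots\leq L_m/N = G/N$ with $(L_i/N)^{\mathfrak{F}}\leq L_{i-1}/N$, where each $L_i\geq N$. Using the standard fact above, $(L_i/N)^{\mathfrak{F}} = L_i^{\mathfrak{F}}N/N$, so $L_i^{\mathfrak{F}}N\leq L_{i-1}$ and in particular $L_i^{\mathfrak{F}}\leq L_{i-1}$. Hence $H = L_0\leq L_1\leq\cdots\leq L_m = G$ witnesses $H$ $\mathfrak{F}$-$sn$ $G$. For part (3), I would push a given chain forward: from $H = H_0\leq\cdots\leq H_m = G$ with $H_i^{\mathfrak{F}}\leq H_{i-1}$, form $HN/N = H_0N/N\leq H_1N/N\leq\cdots\leq H_mN/N = G/N$, and observe that via the isomorphism $H_iN/N\cong H_i/(H_i\cap N)$ together with the quotient property of the residual,
\[
(H_iN/N)^{\mathfrak{F}} = H_i^{\mathfrak{F}}N/N \leq H_{i-1}N/N,
\]
since $H_i^{\mathfrak{F}}\leq H_{i-1}$ forces $H_i^{\mathfrak{F}}N\leq H_{i-1}N$.

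The only real content beyond definition-chasing is the identity $(X/N)^{\mathfrak{F}} = X^{\mathfrak{F}}N/N$, which is where being a formation (closed under quotients and subdirect products) is used. Once that is granted, parts (1)--(3) are purely combinatorial manipulations of chains, and no ordering or minimality hypothesis on $\mathfrak{F}$ is needed. This is why the statement holds uniformly for every non-empty formation and explains why it is merely quoted from \cite{BalEzq} in the paper.
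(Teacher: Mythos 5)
Your proof is correct for $\mathfrak{F}$-subnormality as this paper defines it, and the paper itself contains no proof of this lemma --- it is quoted verbatim from \cite[Lemma~6.1.6]{BalEzq} --- so there is nothing internal to compare against beyond the citation. Your auxiliary identity $(X/N)^{\mathfrak{F}} = X^{\mathfrak{F}}N/N$ is the right pivot and is correctly justified: one inclusion from homomorph-closure of $\mathfrak{F}$ applied to $X/X^{\mathfrak{F}}$ (noting $X/X^{\mathfrak{F}}N$ is a quotient of it), the other from minimality of the residual of $X/N$; given it, your chain concatenation for (1), lifting for (2), and push-forward for (3) are all sound, and you are right that no hereditariness is needed.

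One caveat is worth recording. In the cited source, the defining chains require each $H_{i-1}$ to be a \emph{maximal} subgroup of $H_i$ (an $\mathfrak{F}$-normal maximal subgroup), whereas the present paper's wording of the definition omits maximality; the two notions agree for hereditary formations (via the paper's Lemma~\ref{l1.9}) but not for arbitrary ones, and the lemma as stated assumes only a non-empty formation. Under the stricter definition your arguments for (1) and (2) survive unchanged --- in (2), maximality of $L_{i-1}/N$ in $L_i/N$ is equivalent to maximality of $L_{i-1}$ in $L_i$ because $N \leq L_{i-1}$ --- but in (3) the pushed-forward chain can contain non-maximal or collapsed steps, and you need one extra line: if $H_{i-1}$ is maximal in $H_i$ and $H_{i-1}N \leq T \leq H_iN$, then since $N \leq T$ Dedekind's law gives $T = (T\cap H_i)N$, and $T\cap H_i \in \{H_{i-1}, H_i\}$, so $H_{i-1}N/N$ is either equal to $H_iN/N$ (drop the step) or maximal in it, with the residual condition preserved by your displayed computation. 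With that remark added, your proof is complete under either reading of the definition.
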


\begin{lemma} \cite[Lemma~6.1.7]{BalEzq} \label{l1.9}
Assume that $\mathfrak{F}$ is a hereditary formation.

$(4)$ If $G^{\mathfrak{F}} \leq H$ then $H$ $\mathfrak{F}$-$sn$ $G$.

$(5)$ If $H$ $\mathfrak{F}$-$sn$ $G$ then $H\cap K$ $\mathfrak{F}$-$sn$ $K$.

$(6)$ If $H$ $\mathfrak{F}$-$sn$ $G$ and $K$ $\mathfrak{F}$-$sn$ $G$ then $H\cap K$ $\mathfrak{F}$-$sn$ $G$.
\end{lemma}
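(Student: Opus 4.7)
My plan is to handle the three parts in order, with (5) being the only one that carries real content: (4) is immediate from the definition, and (6) follows from (5) combined with the transitivity already stated as Lemma~\ref{l1.8}(1). For part~(4), the definition of $\mathfrak{F}$-subnormality permits the trivial one-step chain $H_0 = H \leq H_1 = G$, and the required condition $H_1^{\mathfrak{F}} = G^{\mathfrak{F}} \leq H_0$ is exactly the hypothesis; nothing further is needed.

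For part~(5), I would fix an $\mathfrak{F}$-subnormal chain $H = H_0 \leq H_1 \leq \cdots \leq H_m = G$ with $H_i^{\mathfrak{F}} \leq H_{i-1}$ for each $i$, and exhibit the intersected chain $H \cap K = H_0 \cap K \leq H_1 \cap K \leq \cdots \leq H_m \cap K = K$ inside $K$. The only verification is $(H_i \cap K)^{\mathfrak{F}} \leq H_{i-1} \cap K$ for each $i$. Here the hereditary hypothesis on $\mathfrak{F}$ is essential: using $H_i \cap K \leq H_i$ together with the isomorphism $(H_i \cap K)/((H_i \cap K) \cap H_i^{\mathfrak{F}}) \cong (H_i \cap K)H_i^{\mathfrak{F}}/H_i^{\mathfrak{F}} \leq H_i/H_i^{\mathfrak{F}} \in \mathfrak{F}$, heredity of $\mathfrak{F}$ gives $(H_i \cap K)^{\mathfrak{F}} \leq (H_i \cap K) \cap H_i^{\mathfrak{F}} \leq H_i^{\mathfrak{F}} \leq H_{i-1}$; intersecting with $K$ yields the required inclusion. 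The small algebraic fact that powers this step — namely $L^{\mathfrak{F}} \leq M^{\mathfrak{F}}$ whenever $L \leq M$ and $\mathfrak{F}$ is hereditary — is really the entire content of the lemma.

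Part~(6) is then formal: (5) yields $H \cap K$ $\mathfrak{F}$-subnormal in $K$, and $K$ is $\mathfrak{F}$-subnormal in $G$ by hypothesis, so Lemma~\ref{l1.8}(1) gives $H \cap K$ $\mathfrak{F}$-subnormal in $G$. I expect the main obstacle, such as it is, to be writing the hereditary step of~(5) cleanly — one must remember that heredity of the formation is what lets a residual of a subgroup be bounded by the residual of the overgroup; without it, $(H_i \cap K)^{\mathfrak{F}}$ need not lie in $H_i^{\mathfrak{F}}$, and the whole argument collapses. Everything else is bookkeeping.
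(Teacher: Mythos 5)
Your proof is correct. The paper gives no proof of this lemma at all --- it is quoted verbatim from Ballester-Bolinches and Ezquerro (Lemma~6.1.7), and your argument (the one-step chain for (4); intersecting the chain with $K$ together with the residual monotonicity $L\leq M \Rightarrow L^{\mathfrak{F}}\leq M^{\mathfrak{F}}$ for hereditary $\mathfrak{F}$, proved via $L/(L\cap M^{\mathfrak{F}})\cong LM^{\mathfrak{F}}/M^{\mathfrak{F}}\leq M/M^{\mathfrak{F}}$, for (5); and (5) combined with the transitivity in Lemma~\ref{l1.8}(1) for (6)) is precisely the standard argument of that cited source.
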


We will need a result that follows from \cite[Theorem~3.4]{VasVasVeg2016}.

\begin{lemma} \cite{VasVasVeg2016}\label{l1.10}
If $\mathfrak{F}$ is a hereditary saturated formation then $\mathrm{w}\mathfrak{F}$ is a hereditary saturated formation.
\end{lemma}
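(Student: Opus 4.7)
The plan is to verify, in turn, that $\mathrm{w}\mathfrak{F}$ is a homomorph, is hereditary, satisfies the formation axiom for intersections of normal subgroups, and is saturated. The tools are Lemmas~\ref{l1.8} and~\ref{l1.9} together with the saturation of $\mathfrak{F}$.

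The first three properties are essentially formal. For the homomorph property: if $N\unlhd G\in\mathrm{w}\mathfrak{F}$, every Sylow subgroup of $G/N$ is of the form $PN/N$ with $P\in\mathrm{Syl}(G)$, and Lemma~\ref{l1.8}(3) transfers $\mathfrak{F}$-subnormality from $P$ to $PN/N$. For heredity: given $H\leq G$ and $Q\in\mathrm{Syl}_p(H)$, extend $Q$ to $P\in\mathrm{Syl}_p(G)$ containing it; then $P\cap H$ is a $p$-subgroup of $H$ containing $Q$, so $Q=P\cap H$, and Lemma~\ref{l1.9}(5) gives $Q$ $\mathfrak{F}$-subnormal in $H$. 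For the formation axiom: since $G/(N_1\cap N_2)$ embeds in $G/N_1\times G/N_2$, it suffices (once heredity is in hand) to prove closure of $\mathrm{w}\mathfrak{F}$ under finite direct products; this follows by concatenating $\mathfrak{F}$-subnormal chains for the Sylow projections in each factor, using normality of $A$ and $B$ in $A\times B$ together with Lemma~\ref{l1.9}(6).

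Saturation is the main obstacle. Assume $G/\Phi(G)\in\mathrm{w}\mathfrak{F}$ and induct on $|G|$. One first observes $\pi(G)=\pi(G/\Phi(G))\subseteq\pi(\mathfrak{F})$ via Schur--Zassenhaus: a Sylow $p$-subgroup $P\leq\Phi(G)$ would admit a proper complement $H$, whose maximal overgroup $M$ contains $\Phi(G)\supseteq P$, forcing $M\supseteq PH=G$, a contradiction. Next, take a minimal normal subgroup $M\leq\Phi(G)$; then $\Phi(G)/M\leq\Phi(G/M)$, so $(G/M)/\Phi(G/M)$ is a homomorphic image of $G/\Phi(G)\in\mathrm{w}\mathfrak{F}$ and hence lies in $\mathrm{w}\mathfrak{F}$. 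Induction yields $G/M\in\mathrm{w}\mathfrak{F}$, so Lemma~\ref{l1.8}(2) gives $PM$ $\mathfrak{F}$-subnormal in $G$ for every Sylow $P$; by transitivity (Lemma~\ref{l1.8}(1)) it remains to show $P$ is $\mathfrak{F}$-subnormal in $PM$. This is immediate if $M$ is a $p$-group, and otherwise $M$ is an elementary abelian $q$-group with $q\neq p$ and $PM/M\cong P$, at which point the saturation of $\mathfrak{F}$ must be invoked in a substantive way to lift the chain through the normal $p'$-subgroup $M$ sitting inside $\Phi(G)$. This last lifting step is the content of Theorem~3.4 of~\cite{VasVasVeg2016} and is the point I expect to require the most care, since the preceding reductions are purely formal manipulations with Lemmas~\ref{l1.8}--\ref{l1.9} while the genuine saturation hypothesis on $\mathfrak{F}$ only enters at this final stage.
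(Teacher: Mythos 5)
Your closure arguments are essentially correct: the quotient step via Lemma~\ref{l1.8}(3), the subgroup step via $Q=P\cap H$ and Lemma~\ref{l1.9}(5), and the reduction of the formation axiom to direct products handled with the normality of the factors, Lemma~\ref{l1.8}(2) and Lemma~\ref{l1.9}(6), all go through (you only need to add at each stage the routine check that $\pi$ of the group in question stays inside $\pi(\mathfrak{F})$, as the definition of $\mathrm{w}\mathfrak{F}$ requires). The gap is in saturation, which is the substantive half of the statement. Your reduction is valid as far as it goes: $\pi(G)=\pi(G/\Phi(G))$ by the Schur--Zassenhaus argument, a minimal normal $M\leq\Phi(G)$ gives $G/M\in\mathrm{w}\mathfrak{F}$ by induction, hence $PM$ is $\mathfrak{F}$-subnormal in $G$ by Lemma~\ref{l1.8}(2), and everything comes down to showing that $P$ is $\mathfrak{F}$-subnormal in $PM$ when $M$ is an elementary abelian $q$-group with $q\neq p$. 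But at exactly this point you give no argument and instead declare the step to be ``the content of Theorem~3.4 of \cite{VasVasVeg2016}'' --- that is, of the very result being proved. Note that $M\leq\Phi(G)$ does not imply $M\leq\Phi(PM)$, so neither your induction nor the saturation of $\mathfrak{F}$ can be applied directly inside $PM$; knowing that $P$ is $\mathfrak{F}$-subnormal in $PM$ in this configuration is essentially equivalent to knowing $G\in\mathrm{w}\mathfrak{F}$, and the known proofs require genuine input about $\mathfrak{F}$ (e.g.\ its local, Gasch\"utz--Lubeseder--Schmid, description), not just Lemmas~\ref{l1.8}--\ref{l1.9}.

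For the comparison you asked about: the paper does not prove this lemma at all; it imports it wholesale from \cite[Theorem~3.4]{VasVasVeg2016}. So your proposal adds correct but routine reductions around the citation, while the one step where the saturation hypothesis on $\mathfrak{F}$ actually does work remains unproved, exactly as in the paper's citation. As a self-contained proof the proposal therefore has a genuine gap; to close it you would have to reproduce (or re-derive) the argument of the cited theorem rather than appeal to it.
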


\begin{lemma} \cite{VasVasTyu2010}\label{l1.5}
$(1)$ $\mathrm{w}\mathfrak{U}=(G$ $|$ $H$ $\mathfrak{U}$-$sn$ $G$ for all $H\in\mathrm{Syl}(G))=(G$ $|$ $H$ is $\mathbb{P}$-subnormal in $G$ for all $H\in\mathrm{Syl}(G))$.

$(2)$ If $G\in \mathrm{w}\mathfrak{U}$ then $G$ is Ore dispersive.

$(3)$ The class $\rm{w}\frak {U}$ is a hereditary saturated formation and it has a formation function $f$ such that
$f(p)=(G\in\frak{S}$ $|$ ${\rm {Syl}}(G)\subseteq
\frak {A}(p-1))$ for every prime $p$.
\end{lemma}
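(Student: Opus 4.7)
\medskip
\noindent\textbf{Proof plan for Lemma \ref{l1.5}.}

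The first equality in (1) is essentially notational: since $\pi(\mathfrak{U})=\mathbb{P}$, the defining condition $\pi(G)\subseteq\pi(\mathfrak{U})$ is automatic, and what remains is precisely the statement that every Sylow subgroup of $G$ is $\mathfrak{U}$-subnormal. The real work is the second equality of (1), which I would approach by first establishing an auxiliary solubility principle that feeds into (2). If a Sylow $p$-subgroup $P$ is $\mathfrak{U}$-subnormal in $G$ via a chain $P=H_0\le H_1\le\cdots\le H_m=G$ with $H_i^{\mathfrak{U}}\le H_{i-1}$, then since $H_i^{\mathfrak{U}}\trianglelefteq H_i$ lies in $H_{i-1}$, one has $H_i^{\mathfrak{U}}\le\mathrm{Core}_{H_i}(H_{i-1})$, so $H_i/\mathrm{Core}_{H_i}(H_{i-1})$ is a quotient of $H_i/H_i^{\mathfrak{U}}\in\mathfrak{U}$ and is hence supersoluble. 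Inducting up the chain, starting from the soluble $p$-group $P$, gives that every $H_i$ is soluble; in particular $G$ is soluble. This already proves $\mathrm{w}\mathfrak{U}\subseteq\mathfrak{S}$.

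With solubility in hand, the $\mathfrak{U}$-$sn$ $\Rightarrow$ $\mathbb{P}$-$sn$ direction is a refinement argument: in each step of an $\mathfrak{U}$-subnormal chain, $H_i/H_i^{\mathfrak{U}}$ is supersoluble, so by Lemma \ref{l1.2} any intermediate subgroup can be reached from $H_i$ by a chain of maximal subgroups each of prime index; lifting this refinement back to $G$ turns the $\mathfrak{U}$-subnormal chain into a $\mathbb{P}$-subnormal one. For the converse direction, take a chain with prime indices $|H_i:H_{i-1}|=q_i$; the solubility of $H_i$ makes $H_i/\mathrm{Core}_{H_i}(H_{i-1})$ a primitive soluble group with self-centralizing minimal normal subgroup of order $q_i$, hence of order dividing $q_i(q_i-1)$ and supersoluble. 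Therefore $H_i^{\mathfrak{U}}\le\mathrm{Core}_{H_i}(H_{i-1})\le H_{i-1}$, which is exactly the $\mathfrak{U}$-subnormality condition. (To avoid circularity on solubility when starting from a $\mathbb{P}$-subnormal hypothesis, I would use the minimal-counterexample observation that a non-abelian simple $G$ with $\mathbb{P}$-subnormal Sylows embeds in $S_{q_m}$ for some prime $q_m<\max\pi(G)$, contradicting the occurrence of the largest prime in $|G|$.)

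For (2), I would apply (1) to work with $\mathbb{P}$-subnormal Sylow subgroups. Let $p=\max\pi(G)$ and $P\in\mathrm{Syl}_p(G)$. In a $\mathbb{P}$-subnormal chain $P=H_0<\cdots<H_m=G$, all indices $|H_i:H_{i-1}|$ divide $|G:P|$ and so are primes strictly less than $p$; passing to the last step and using the primitive soluble structure of $G/\mathrm{Core}_G(H_{m-1})$ (which has order dividing $q_m(q_m-1)$ with $q_m<p$), one obtains $p\nmid |G:\mathrm{Core}_G(H_{m-1})|$, hence $P\le\mathrm{Core}_G(H_{m-1})\trianglelefteq G$. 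Assuming hereditariness of $\mathrm{w}\mathfrak{U}$ (proved in (3)), the normal subgroup $\mathrm{Core}_G(H_{m-1})$ lies in $\mathrm{w}\mathfrak{U}$; induction on order gives $P\trianglelefteq\mathrm{Core}_G(H_{m-1})$, and then $P$ is characteristic there and so normal in $G$. Quotienting and inducting on $|G|$ yields the full Sylow tower.

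For (3), the hereditary saturated formation claim is immediate from Lemma \ref{l1.10} applied to $\mathfrak{U}$, which is itself hereditary and saturated by Lemma \ref{l1.3}(2). To verify the formation function, I would check the two inclusions separately. For $\mathrm{w}\mathfrak{U}\subseteq LF(f)$: given a chief factor $H/K$ of $G\in\mathrm{w}\mathfrak{U}$ of order $p^n$, the quotient $G/C_G(H/K)$ embeds in $\mathrm{GL}(n,p)$; its solubility follows from that of $G$ (part 2), and applying the hereditariness of $\mathrm{w}\mathfrak{U}$ together with the $\mathbb{P}$-subnormality of each Sylow $q$-subgroup of $G/C_G(H/K)$ (lifted from $G$), the Sylow structure is forced into $\mathfrak{A}(p-1)$ because an element acting with order not dividing $p-1$ would require an irreducible faithful action that obstructs the existence of a prime-index chain reaching the corresponding Sylow. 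For $LF(f)\subseteq\mathrm{w}\mathfrak{U}$: proceed by induction on $|G|$ and on chief factors, constructing the required $\mathbb{P}$-subnormal chains for the Sylow subgroups by exploiting, at each chief factor $H/K$ of order $p^n$, the abelian-of-exponent-dividing-$(p-1)$ Sylow structure of $G/C_G(H/K)$, which guarantees enough invariant subspaces for the chain refinement to proceed through prime indices. The main obstacle in the whole proof is the detailed verification of the formation function in the inclusion $LF(f)\subseteq\mathrm{w}\mathfrak{U}$, where one must exhibit the $\mathbb{P}$-subnormal chains explicitly using the invariant-subgroup structure forced by $f(p)$.
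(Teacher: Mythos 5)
First, a point of comparison that matters: the paper does not prove Lemma \ref{l1.5} at all --- it is imported verbatim with the citation \cite{VasVasTyu2010}, so there is no internal argument to measure your proposal against, and it must be judged as a free-standing reconstruction of the cited result. On that score, your parts (1) and (2) are sound and essentially complete in outline: the solubility bootstrap along an $\mathfrak{U}$-subnormal chain, the refinement into a prime-index chain via Lemma \ref{l1.2}, the converse via the primitive quotient $H_i/\mathrm{Core}_{H_i}(H_{i-1})$ of order dividing $q_i(q_i-1)$, and the reduction of the solubility question to a simple group embedded in $S_{q_m}$ with $q_m<\max\pi(G)$ are all correct. To make the last reduction airtight you should record the fact it rests on: for $N\unlhd G$, each index $|H_i\cap N:H_{i-1}\cap N|=|H_{i-1}(H_i\cap N):H_{i-1}|$ divides $|H_i:H_{i-1}|$, so $\mathbb{P}$-subnormality of Sylow subgroups passes to normal subgroups (and, trivially, to quotients); this both legitimizes the minimal-counterexample step and lets you drop the forward appeal to the hereditariness from (3) inside the proof of (2), since the only subgroup you need to keep in $\mathrm{w}\mathfrak{U}$ there is the \emph{normal} subgroup $\mathrm{Core}_G(H_{m-1})$.

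The genuine gap is in (3), and you flag it yourself. The hereditary saturated formation claim via Lemma \ref{l1.10} applied to $\mathfrak{U}$ (Lemma \ref{l1.3}(2)) is fine, but both inclusions of $LF(f)=\mathrm{w}\mathfrak{U}$ currently rest on assertions, not arguments. For $\mathrm{w}\mathfrak{U}\subseteq LF(f)$, the phrase ``an irreducible faithful action that obstructs the existence of a prime-index chain'' hides the actual mechanism. The heart of the matter is the configuration $G=NQ$ with $N=C_G(N)$ a minimal normal $p$-subgroup and $Q$ a $\mathbb{P}$-subnormal Sylow $q$-subgroup: any prime-index chain $Q=H_0<\cdots<H_m=G$ satisfies $H_i=Q(H_i\cap N)$, each $H_i\cap N$ is normalized by $H_i\geq Q$ and hence normal in $G$ (as $N$ is abelian), so irreducibility of $N$ forces $|N|=p$ and $Q\cong G/C_G(N)\leq Z_{p-1}$ --- this, not a vague obstruction, is what drives Sylow subgroups into $\mathfrak{A}(p-1)$; reducing the general chief factor to this configuration (via saturation, $\Phi(G)=1$, and Hall-type arguments when $QN<G$) is real work you have not sketched. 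For $LF(f)\subseteq\mathrm{w}\mathfrak{U}$, ``enough invariant subspaces'' is exactly the representation-theoretic fact you must state and use: an abelian group of exponent dividing $p-1$ has all its irreducible modules over the field of $p$ elements one-dimensional (its eigenvalues already lie in that field), so a $Q$-composition series $1=L_0<\cdots<L_t=N$ has all factors of order $p$, giving the chain $Q<QL_1<\cdots<QN$ with indices $p$, to be concatenated with a lifted chain from $QN/N$ to $G/N$; and for the prime $p$ itself, $f(p)$ forces $p\nmid|G/C_G(N)|$, so $N\in\mathrm{Syl}_p(G)$ is normal and a composition series of the soluble quotient $G/N$ supplies its chain. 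With these two mechanisms made explicit your plan closes; as written, part (3) is a program rather than a proof.
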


\begin{lemma} \label{l1.6}
$(1)$ The class $\mathfrak{U}_{k}$ is a hereditary formation. \cite[Lemma 3(2)]{MonSok_Sib_el23}.

$(2)$ The class $\mathrm{w}\mathfrak{U}_{k}$ is a hereditary saturated formation. \cite[Proposition 1]{MonSok_Sib_el23}.

$(3)$ If $G\in \mathrm{w}\mathfrak{U}_{k}$, then $G/F(G)\in \mathcal{A}_{k}$ \cite[Corollary 1]{MonSok_Sib_el23}.
\end{lemma}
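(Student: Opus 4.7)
For part (1), my plan is to write $\mathfrak{U}_k = \mathfrak{U}\cap\mathfrak{E}_k$, where $\mathfrak{E}_k$ is the class of groups whose exponent is not divisible by any $(k+1)$th power of a prime. Since $\mathfrak{U}$ is a hereditary formation by Lemma \ref{l1.3}(2), it suffices to verify that $\mathfrak{E}_k$ is a hereditary formation. Hereditariness and quotient closure of $\mathfrak{E}_k$ are immediate, because exponents divide under taking subgroups and quotients. For residual closure, $G/(N_1\cap N_2)$ embeds into $G/N_1\times G/N_2$, and the exponent of a direct product is the $\mathrm{lcm}$ of the factor exponents, which preserves the bound. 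An intersection of hereditary formations is again a hereditary formation.

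For part (2), I would \emph{not} invoke Lemma \ref{l1.10} directly: the class $\mathfrak{U}_k$ itself is not saturated, as witnessed by $Z_{p^{k+1}}$, whose Frattini quotient is $Z_p\in\mathfrak{U}_k$ while $Z_{p^{k+1}}\notin\mathfrak{U}_k$. So the four defining properties of a hereditary saturated formation have to be checked for $\mathrm{w}\mathfrak{U}_k$ by hand. Hereditariness, quotient closure, and residual closure reduce to the transfer properties of $\mathfrak{U}_k$-subnormality (Lemmas \ref{l1.8} and \ref{l1.9}, legitimate by part (1)) applied to Sylow subgroups, using that a Sylow $p$-subgroup of a subgroup $H\leq G$ has the form $P^\ast\cap H$ for some $P^\ast\in\mathrm{Syl}_p(G)$ and that $PN/N$ is Sylow in $G/N$ whenever $P\in\mathrm{Syl}_p(G)$. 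The principal obstacle is saturation. My approach is to pick a minimal counterexample $G$ with $G/\Phi(G)\in\mathrm{w}\mathfrak{U}_k$ but $G\notin\mathrm{w}\mathfrak{U}_k$, choose a minimal normal subgroup $N\leq\Phi(G)$, obtain $G/N\in\mathrm{w}\mathfrak{U}_k$ by induction on $|G|$, lift $\mathfrak{U}_k$-subnormality of $PN/N$ to $PN$ in $G$ via Lemma \ref{l1.8}(2), and finally descend from $PN$ to $P$. The last step is the delicate one: I would exploit the nilpotence of $\Phi(G)$ to get a normal series of $p'$-Sylow factors of $N$ joining $P$ to $PN$, so that $P$ becomes subnormal and, hence, $\mathfrak{U}_k$-subnormal inside $PN$.

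For part (3), I would identify a local formation function for $\mathrm{w}\mathfrak{U}_k$ in parallel with Lemma \ref{l1.5}(3), namely $f(p)=\{G\in\mathfrak{S}:\mathrm{Syl}(G)\subseteq\mathfrak{A}(p-1)_k\}$. Granted this, any $G\in\mathrm{w}\mathfrak{U}_k$ satisfies $G/C_G(H/K)\in f(p)$ for every chief factor $H/K$ and corresponding prime $p$; so $G/C_G(H/K)$ is an abelian $p'$-group of exponent dividing $p-1$ whose Sylow $q$-subgroups have exponent at most $q^k$. Solubility of $G$, which follows from the chain-of-subgroups characterization in the spirit of Lemma \ref{l1.5}(2), yields $F(G)=\bigcap_{H/K}C_G(H/K)$ as the intersection runs over the chief factors of $G$. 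Consequently $G/F(G)$ embeds into the direct product $\prod G/C_G(H/K)$, all of whose factors lie in $\mathcal{A}_k$, so $G/F(G)\in\mathcal{A}_k$ as required.
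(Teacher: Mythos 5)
The paper offers no internal proof of this lemma: all three parts are quoted from Monakhov--Sokhor \cite{MonSok_Sib_el23}, so your attempt has to stand on its own. Part (1) is fine, and your observation that Lemma \ref{l1.10} is not directly applicable because $\mathfrak{U}_k$ is not saturated (witness $Z_{p^{k+1}}$) is correct. The closure properties of $\mathrm{w}\mathfrak{U}_k$ in part (2) also go through as you describe, via Lemmas \ref{l1.8}, \ref{l1.9} and the identity $PN_1\cap PN_2=P(N_1\cap N_2)$.

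The genuine gap is your saturation argument, at the descent from $PN$ to $P$. Since $N$ is a minimal normal subgroup contained in the nilpotent group $\Phi(G)$, it is an elementary abelian $q$-group for a single prime $q$, so there is no ``normal series of $p'$-Sylow factors of $N$'' to climb; and for $q\neq p$ the subgroup $P\in\mathrm{Syl}_p(PN)$ is subnormal in $PN=N\rtimes P$ only if it is normal there, i.e.\ only if $[N,P]=1$, which fails in general (take $G=Z_9\rtimes Z_2$, $N=\Phi(G)=Z_3$, $P=Z_2$: here $P$ is $\mathfrak{U}_k$-subnormal in $PN=S_3$, but not subnormal, so your stated reason is simply false even when the conclusion holds). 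The actual difficulty of saturation is exactly to bound the exponent of the image of $P$ acting on the Frattini chief factor $N$ --- e.g.\ if $P$ were cyclic of order $p^{k+1}$ acting faithfully on $N$, then $(PN)^{\mathfrak{U}_k}\cdot P=PN$ and $P$ would \emph{not} be $\mathfrak{U}_k$-subnormal in $PN$ --- and nothing in your sketch rules this out; one needs an argument of the type $N\le Z(F(G))$, $F(G/\Phi(G))=F(G)/\Phi(G)$, and $(G/\Phi(G))/F(G/\Phi(G))\in\mathcal{A}_k$ to control $P C_G(N)/C_G(N)$. Part (3) has a related problem: it rests entirely on the unproved assertion that $\mathrm{w}\mathfrak{U}_k=LF(f)$ with $f(p)=(G\in\mathfrak{S}\mid\mathrm{Syl}(G)\subseteq\mathfrak{A}(p-1)_k)$; this local characterization is essentially the substance of the cited Proposition~1 (and, being a local formation, would by itself already give the saturation you struggle with in (2)), so asserting it ``in parallel with Lemma \ref{l1.5}(3)'' front-loads the whole difficulty. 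Granting that characterization, your finish via $F(G)=\bigcap_{H/K}C_G(H/K)$ for soluble $G$ and closure of $\mathcal{A}_k$ under subgroups of direct products is correct.
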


\section{Properties of $k$-submodular subgroups of groups}

\begin{lemma} \label{l2.1}
Let $H$ be a subgroup of a group $G$ and $N\leq H$ for some $N\unlhd G$. The subgroup $H$ is $n$-modularly embedded in $G$ if and only if $H/N$ is $n$-modularly embedded in $G/N$.
\end{lemma}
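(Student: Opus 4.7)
The plan is to reduce the statement to the well-known correspondence-theorem fact that, when $N\unlhd G$ and $N\le H\le G$, one has
\[
\mathrm{Core}_{G/N}(H/N)=\mathrm{Core}_{G}(H)/N,
\]
from which the equivalence will fall out by examining the two clauses of Definition~\ref{d1} separately.

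First I would record the two elementary correspondence facts. The conjugates of $H/N$ in $G/N$ are precisely the subgroups $H^{g}/N$ with $g\in G$ (this uses $N\unlhd G$ together with $N\le H$, so that $N\le H^{g}$ for every $g$). Intersecting gives $\mathrm{Core}_{G/N}(H/N)=\bigl(\bigcap_{g\in G}H^{g}\bigr)/N=\mathrm{Core}_{G}(H)/N$. Combined with the third isomorphism theorem, this yields
\[
(G/N)/\mathrm{Core}_{G/N}(H/N)\;\cong\;G/\mathrm{Core}_{G}(H),
\]
and one also has $|G/N:H/N|=|G:H|$. Likewise, $H\unlhd G$ if and only if $H/N\unlhd G/N$.

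Next I would split into the two cases of Definition~\ref{d1}. If $H\unlhd G$ then $H/N\unlhd G/N$, so both are $n$-modularly embedded by the first clause (and conversely, if $H/N\unlhd G/N$ then $H\unlhd G$). If instead the second clause holds for $H$ in $G$ — namely $|G:H|=p$ and $G/\mathrm{Core}_{G}(H)$ is a non-nilpotent group of order $pq^{n}$ for some primes $p,q$ — then by the two displayed identities $(G/N)/\mathrm{Core}_{G/N}(H/N)$ is isomorphic to $G/\mathrm{Core}_{G}(H)$, hence likewise non-nilpotent of order $pq^{n}$, while $|G/N:H/N|=p$. So $H/N$ is $n$-modularly embedded in $G/N$. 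The converse implication runs symmetrically using the same two identities.

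I do not expect a real obstacle here: the whole statement is a quotient-compatibility check, and the only thing to be careful about is that the definition of $n$-modularly embedded involves $\mathrm{Core}_{G}(H)$ (rather than, say, a normal core intrinsic to $H/N$), so one must explicitly verify that taking the core commutes with passing to $G/N$; this is exactly where the hypothesis $N\le H$ is needed.
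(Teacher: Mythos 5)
Your proof is correct and follows essentially the same route as the paper: the paper's own proof consists of noting $\mathrm{Core}_{G/N}(H/N)=\mathrm{Core}_{G}(H)/N$ and then verifying Definition~\ref{d1} directly, which is exactly the reduction you carry out (you merely write out the verification in full). No gaps; the key point you flag — that $N\le H$ with $N\unlhd G$ is what makes the core commute with the quotient — is precisely the observation the paper relies on.
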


\begin{proof}

Note that $\mathrm{Core}_{G}(H)/N=\mathrm{Core}_{G/N}(H/N)$. Then the proof is obtained by direct verification of Definition \ref{d1}.
\end{proof}

\begin{lemma} \label{l2.2}
Let $M$ be a maximal subgroup of a group $G$. Then the following statements are equivalent.

$(1)$ $M$ is $k$-submodular in $G$.

$(2)$ Either $M\unlhd G$, or $G/\mathrm{Core}_{G}(M)$ is a non-nilpotent biprimary group with a cyclic Sylow $q$-subgroup $M/\mathrm{Core}_{G}(M)$ and a normal Sylow $p$-subgroup $P/\mathrm{Core}_{G}(M)$ such that $|M/\mathrm{Core}_{G}(M)|=q^{n}$ and $|P/\mathrm{Core}_{G}(M)|=p$ for some $p, q\in \mathbb{P}$ and a natural number $n\leq k$.
\end{lemma}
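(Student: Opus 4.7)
The plan is to reduce (1) to a single modular-embedding statement and then extract the structural details in (2) from Lemma \ref{l1.1}.

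First I would observe that, since $M$ is maximal in $G$, any chain $M=H_0\le H_1\le\cdots\le H_m=G$ witnessing $k$-submodularity must collapse to the trivial chain $M\le G$. Hence $M$ is $k$-submodular in $G$ if and only if $M$ is $n$-modularly embedded in $G$ for some natural $n\le k$, and by Definition \ref{d1} this means either $M\unlhd G$, or $|G:M|=p$ and $\bar G:=G/\mathrm{Core}_G(M)$ is non-nilpotent of order $pq^{n}$ for some primes $p,q$ and $n\le k$. This rewrites (1) in a form that is directly comparable with (2).

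The direction $(1)\Rightarrow(2)$ is then the substantive part. Assume $M\not\unlhd G$ and pass to $\bar G=G/\mathrm{Core}_G(M)$ with $\bar M=M/\mathrm{Core}_G(M)$. Since $|\bar G|=pq^{n}$, $\bar G$ is soluble (Burnside) and $\bar M$ is a maximal subgroup of $\bar G$ with $\mathrm{Core}_{\bar G}(\bar M)=1$. Applying Lemma \ref{l1.1} I get a unique minimal normal subgroup $\bar N$ of $\bar G$ satisfying $\bar G=\bar N\bar M$, $\bar N\cap\bar M=1$ and $\bar N=C_{\bar G}(\bar N)=F(\bar G)$. From $|\bar G:\bar M|=|\bar N|=p$ I conclude that $\bar N$ is the (normal) Sylow $p$-subgroup $\bar P$ of $\bar G$ and that $\bar M$ is a complement to $\bar N$ of order $q^{n}$, hence a Sylow $q$-subgroup of $\bar G$. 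The cyclicity of $\bar M$ follows because $\bar M$ acts faithfully on $\bar N\cong Z_p$ (as $C_{\bar G}(\bar N)=\bar N$ and $\bar M\cap\bar N=1$), so $\bar M$ embeds in $\mathrm{Aut}(Z_p)\cong Z_{p-1}$.

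The converse $(2)\Rightarrow(1)$ is essentially a matter of reading off orders: if $\bar G$ is non-nilpotent with $|\bar M|=q^{n}$ and $|\bar P|=p$, then $|\bar G|=pq^{n}$ and $|G:M|=p$, so $M$ is $n$-modularly embedded in $G$ with $n\le k$, and hence $k$-submodular via the one-step chain $M\le G$. The only place a genuine argument is needed is in the first implication, so the main (minor) obstacle is verifying that $\bar M$ must be cyclic; this is handled by the faithful-action remark above.
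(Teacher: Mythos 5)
Your argument is correct and follows essentially the same route as the paper's proof: the chain collapses by maximality of $M$ to a single $n$-modular embedding, then Lemma~\ref{l1.1} is applied to $G/\mathrm{Core}_{G}(M)$ to identify the unique minimal normal subgroup as the normal Sylow $p$-subgroup and the complement $M/\mathrm{Core}_{G}(M)$ as a Sylow $q$-subgroup, cyclic because it embeds in $\mathrm{Aut}(Z_{p})\cong Z_{p-1}$. The converse is handled in both cases by reading off orders and using the one-step chain, so no further comment is needed.
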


\begin{proof}
Assume that $(1)$ holds. Let $D=\mathrm{Core}_{G}(M)$.
If $M= D$, then $M\unlhd$ $G$. Let's assume that $M\not= D$.
Then $G/D$ is non-nilpotent.
 From the maximality of $M$ in $G$ we have that $M$ is $n$-modularly embedded in $G$ for some natural number $n\leq k$. Then $|G:M|=p$ and $|G/D|=pq^{n}$ for some $p, q\in \mathbb{P}$. By Burnside's theorem $G/D$ is solvable. By
Lemma~\ref{l1.1} $G/D=P/D\cdot M/D$, where $P/D$ is minimal normal subgroup in $G/D$, $(P/D)\cap (M/D)=1$, $P/D= C_ {G/D}(P/D)$. It follows that $|P/D|=|G:M|=p$ and $|M/D|=q^{n}$.  So $P/D\in\mathrm{Syl}_{p}(G/D)$ and $M/D\in\mathrm{Syl}_{q}(G/D)$. From $M/D\cong G/D/C_{G/D}(P/D)$
is isomorphic to a subgroup of $\mathrm{Aut}(Z_{p})\cong Z_{p-1}$ we have that $M/D$ is cyclic. Therefore  $(1) \Rightarrow (2)$.

If $(2)$ holds, then it is easy to verify that $M$ is $n$-modularly embedded in $G$. From the maximality of $M$ it follows that $M$ is k-submodular in $G$. Thus  $(2) \Rightarrow (1)$.
\end{proof}

\begin{lemma} \label{l2.3}
Let $G$ be a group.

$(1)$ If every maximal subgroup of a group $G$ is $k$-submodular in $G$, then $G$ is supersoluble.

$(2)$ If $M$ is a maximal subgroup of $G$ and $M$ is $k$-submodular in $G$, then $M$ is $\mathbb{P}$-subnormal and $\mathrm{K}$-$\mathbb{P}$-subnormal in $G$.
\end{lemma}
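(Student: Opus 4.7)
The plan is to prove (2) first, since (1) will follow immediately from Lemma~\ref{l1.2} once (2) is in hand. The key reduction, common to both parts, is to observe that if $M$ is maximal in $G$ and $k$-submodular, then the chain
$M = H_0 \leq H_1 \leq \cdots \leq H_m = G$
from Definition~\ref{d2} may be assumed strictly increasing (removing any repeated terms preserves the $n$-modular embedding conditions, since each deleted step is trivially satisfied), and then the maximality of $M$ in $G$ forces $m = 1$. Hence $M$ itself is $n$-modularly embedded in $G$ for some $n \leq k$, and Lemma~\ref{l2.2}(2) applies: either $M \unlhd G$, or $G/\mathrm{Core}_{G}(M)$ is non-nilpotent biprimary with $|G : M| = p$ prime.

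For (2), the second branch of this dichotomy directly gives $|G:M|$ prime, so the one-step chain $M \leq G$ witnesses both $\mathbb{P}$-subnormality and $\mathrm{K}$-$\mathbb{P}$-subnormality. In the first branch, $M \unlhd G$, I would use that $M$ is simultaneously maximal as a subgroup of $G$ and normal in $G$: by the correspondence theorem no subgroup of $G/M$ lies strictly between $\{1\}$ and $G/M$, and any group with that property must be cyclic of prime order (apply Cauchy's theorem to any prime divisor of a would-be composite order to produce a proper nontrivial subgroup). Thus $|G:M|$ is prime in this branch too, and again the one-step chain $M \leq G$ witnesses both subnormality properties.

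For (1), the hypothesis is that every maximal subgroup of $G$ is $k$-submodular, so by (2) every maximal subgroup of $G$ is $\mathbb{P}$-subnormal in $G$; in particular every maximal subgroup has prime index. Lemma~\ref{l1.2} then gives that $G$ is supersoluble, completing the argument.

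The one potential confusion, rather than a real obstacle, is in the normal branch of (2): Definition~\ref{d1} places no restriction on $|G:H|$ when $H$ is merely normal in $G$, and Lemma~\ref{l2.2}(2) inherits this silence. The bridge to prime index comes not from $k$-submodularity itself but from the separate hypothesis that $M$ is a \emph{maximal} subgroup of $G$; once this elementary observation is made, everything else reduces to a direct application of Lemmas~\ref{l2.2} and~\ref{l1.2}.
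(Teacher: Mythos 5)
Your proposal is correct and takes essentially the same route as the paper: maximality collapses the $k$-submodular chain so that $M$ itself is $n$-modularly embedded in $G$, the normal/non-normal dichotomy (via Definition~\ref{d1}, equivalently Lemma~\ref{l2.2}) gives prime index in both cases, and Lemma~\ref{l1.2} then yields supersolubility. The only cosmetic differences are that you prove (2) first and deduce (1), whereas the paper argues (1) directly and notes (2) as a byproduct, and you settle the normal branch by observing $G/M$ has no proper nontrivial subgroups while the paper uses a short Sylow argument for the same standard fact.
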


\begin{proof}
(1) Let $W$ be any maximal subgroup of $G$.
If $W=\mathrm{Core}_{G}(W)$, then $W\unlhd G$. Then $G=WP$ for some $P\in\mathrm{Syl}_{p}(G)$ and $p\in\pi(G:W)$. From the maximality of $W$ in $G$ it follows that $|G:W|=p$. Let's assume that $W\not=\mathrm{Core}_{G}(W)$. Then by Definition~1 $|G:W|$ is a prime number.
By Lemma \ref{l1.2} 
$G$ is supersoluble.

(2) The statement follows from proof (1) and Definition \ref{d1}.
\end{proof}

\begin{lemma} \label{l2.4}
Let $H$ be a subgroup of a group $G$. 

$(1)$ If $H$ is $k$-submodular in $G$, then $H^{x}$ is $k$-submodular in $G$ for all $x\in G$.

$(2)$ If $H\leq R\leq G$, $H$ is $k$-submodular in $R$ and $R$ is $k$-submodular in $G$, then $H$ is $k$-submodular in $G$.
\end{lemma}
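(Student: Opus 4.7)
The plan is to prove both parts directly from the definition of $k$-submodularity (Definition~\ref{d2}), using that conjugation and chain concatenation behave well with respect to the $n$-modularly embedded property (Definition~\ref{d1}).

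For part~(1), I would start with a witness chain $H = H_0 \leq H_1 \leq \cdots \leq H_m = G$ in which each $H_{i-1}$ is $n_i$-modularly embedded in $H_i$ for some $n_i \leq k$, and form the conjugated chain $H^x = H_0^x \leq H_1^x \leq \cdots \leq H_m^x = G$. The key point is to verify that $H_{i-1}^x$ is $n_i$-modularly embedded in $H_i^x$ for the same exponent $n_i$. If $H_{i-1} \trianglelefteq H_i$ then clearly $H_{i-1}^x \trianglelefteq H_i^x$, so the interesting case is when $H_i / \mathrm{Core}_{H_i}(H_{i-1})$ is non-nilpotent of order $pq^{n_i}$ with $|H_i : H_{i-1}| = p$. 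Then I would use the standard identity $\mathrm{Core}_{H_i^x}(H_{i-1}^x) = \mathrm{Core}_{H_i}(H_{i-1})^x$, which gives the natural isomorphism $H_i^x / \mathrm{Core}_{H_i^x}(H_{i-1}^x) \cong H_i / \mathrm{Core}_{H_i}(H_{i-1})$, and observe that conjugation preserves index, so $|H_i^x : H_{i-1}^x| = p$ as well. This transfers the defining conditions verbatim.

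For part~(2), I would simply concatenate the two given chains. Let $H = H_0 \leq H_1 \leq \cdots \leq H_l = R$ witness $k$-submodularity of $H$ in $R$ (with exponents $n_i \leq k$) and let $R = R_0 \leq R_1 \leq \cdots \leq R_m = G$ witness $k$-submodularity of $R$ in $G$ (with exponents $n'_j \leq k$). Splicing at $R$ yields the chain
\begin{equation*}
H = H_0 \leq \cdots \leq H_l = R_0 \leq \cdots \leq R_m = G,
\end{equation*}
whose every consecutive pair is $n$-modularly embedded for some $n \leq k$; this is exactly the condition required by Definition~\ref{d2}.

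There is no genuine obstacle here. The only step that requires any thought is the core-conjugation identity in part~(1); everything else is formal bookkeeping on the witnessing chains. Since the $n$-modularly embedded property is defined in invariant terms (a quotient by the core together with a condition on the index), it is automatically stable under conjugation, so the argument for (1) is immediate, and (2) is pure concatenation.
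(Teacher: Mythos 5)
Your proof is correct and takes essentially the same route as the paper, which simply remarks that both statements follow from Definition~\ref{d2} without giving further detail. Your conjugation argument via $\mathrm{Core}_{H_i^x}(H_{i-1}^x)=\mathrm{Core}_{H_i}(H_{i-1})^x$ and the concatenation of witness chains are exactly the verifications implicit in that remark.
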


\begin{proof}
Statements (1) and (2) follow from Definition \ref{d2}.
\end{proof}

\begin{lemma} \label{l2.5}
Let $H$ and $U$ be subgroups of a group $G$. 

$(1)$ If $H$ is $k$-submodular in $G$, then $H\cap U$ is $k$-submodular in $U$.

$(2)$ If $H$ is $k$-submodular in $G$ and $U$ is $k$-submodular in $G$, then $H\cap U$ is $k$-submodular in $G$.
\end{lemma}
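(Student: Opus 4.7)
The idea for (1) is to intersect a $k$-submodular chain for $H$ in $G$ with $U$ termwise. Take $H=H_0\leq H_1\leq\cdots\leq H_m=G$ with $H_{i-1}$ being $n_i$-modularly embedded in $H_i$, $n_i\leq k$. I will show that $H_{i-1}\cap U$ is $n'_i$-modularly embedded in $H_i\cap U$ with $n'_i\leq n_i$; then $H\cap U=H_0\cap U\leq\cdots\leq H_m\cap U=U$, after collapsing any repeated terms, is the required witnessing chain in $U$.

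Fix $i$ and set $L:=H_i\cap U$, $K:=H_{i-1}\cap U=L\cap H_{i-1}$, and $D:=\mathrm{Core}_{H_i}(H_{i-1})$. If $H_{i-1}\unlhd H_i$ then $K\unlhd L$, so the step is normal. Otherwise, by Lemma~\ref{l2.2} we have $|H_i:H_{i-1}|=p$ and $H_i/D$ is non-nilpotent of order $pq^{n_i}$ with normal Sylow $p$-subgroup $P/D$ of order $p$ and cyclic Sylow $q$-subgroup $H_{i-1}/D$ of order $q^{n_i}$; by Lemma~\ref{l1.1}, $H_{i-1}/D$ acts faithfully on $P/D$. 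If $L\leq H_{i-1}$ then $K=L$ and the step collapses. Otherwise the maximality of $H_{i-1}$ in $H_i$ forces $LH_{i-1}=H_i$, hence $|L:K|=p$. The Dedekind identity gives $LD\cap H_{i-1}=D(L\cap H_{i-1})=DK$, so $L/(L\cap D)\cong LD/D$ has order $pq^m$ for some $m\leq n_i$, with normal Sylow $p$-subgroup $P/D$ and Sylow $q$-subgroup $DK/D$ inheriting the faithful action on $P/D$. If $m\geq 1$, this action is non-trivial, so $LD/D$ is non-nilpotent; moreover any normal subgroup of $LD/D$ lying in $DK/D$ would centralise $P/D$ and therefore be trivial, giving $\mathrm{Core}_L(K)=L\cap D$. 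Lemma~\ref{l2.2} then shows that $K$ is $m$-modularly embedded in $L$ with $m\leq k$. If $m=0$, comparison of indices yields $K=L\cap D\unlhd L$, reducing again to the normal case.

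Part (2) is an immediate consequence: applying (1) to $H\leq G$ and $U$ gives that $H\cap U$ is $k$-submodular in $U$; combined with the hypothesis that $U$ is $k$-submodular in $G$, the transitivity of $k$-submodularity (Lemma~\ref{l2.4}(2)) applied to $H\cap U\leq U\leq G$ shows that $H\cap U$ is $k$-submodular in $G$. The main obstacle lies in part (1), namely the simultaneous verification that $\mathrm{Core}_L(K)=L\cap D$ and that $LD/D$ is non-nilpotent; both steps rest on the faithful action of $H_{i-1}/D$ on the unique minimal normal subgroup $P/D$ of $H_i/D$ provided by Lemma~\ref{l1.1}, which is then restricted to the sub-quotient $DK/D$.
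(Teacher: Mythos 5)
Your overall strategy — intersecting the witnessing chain termwise with $U$, and deducing part (2) from part (1) together with Lemma~\ref{l2.4}(2) — is the same as the paper's, but there is a genuine gap in the dichotomy you set up at a non-normal step. From $L\not\leq H_{i-1}$ and the maximality of $H_{i-1}$ in $H_i$ you conclude $LH_{i-1}=H_i$ and hence $|L:K|=p$. Maximality only gives $\langle L,H_{i-1}\rangle=H_i$: the set product $LH_{i-1}$ of two subgroups need not be a subgroup, and $|L:K|$ need not equal $p$. Concretely, take $H_i=S_3$, $H_{i-1}=\langle (12)\rangle$ (so $D=1$, $p=3$, $q=2$, $n_i=1$) and $U=L=\langle (13)\rangle$: then $L\not\leq H_{i-1}$ and $\langle L,H_{i-1}\rangle=H_i$, but $LH_{i-1}$ has four elements and $|L:K|=2\neq 3$. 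In such a situation $p$ does not divide $|LD/D|$, so your subsequent assertion that $LD/D$ has order $pq^m$ and contains $P/D$ also fails, and the rest of your argument for that step does not apply.

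The missing case is exactly the one where $LD/D$ is a $q$-group, i.e.\ lies in some conjugate of the cyclic Sylow $q$-subgroup $H_{i-1}/D$ of $H_i/D$. There the correct conclusion is not a $pq^m$ structure but that the step is in fact normal: $L/(L\cap D)\cong LD/D$ is cyclic, hence its subgroup $K/(L\cap D)$ is normal in it, so $K\unlhd L$ — this is precisely the first case in the paper's proof. Once this case is added, and the equality $|L:K|=p$ is re-derived only under the hypothesis that $p$ divides $|LD/D|$ (e.g.\ from $P/D\leq LD/D$ and $LD/D=(P/D)(KD/D)$, as in the paper's second case), the remainder of your argument — faithfulness of the action of $KD/D$ on $P/D$ via Lemma~\ref{l1.1}, the identification $\mathrm{Core}_L(K)=L\cap D$, non-nilpotency of $LD/D$ when $m\geq 1$, and the degenerate case $m=0$ — is sound and essentially reproduces the paper's computation. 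Part (2) is argued exactly as in the paper.
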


\begin{proof}
(1) Suppose that $H$ is $k$-submodular in $G$. We can assume that $H\not= G$. There is a chain of subgroups (\ref{1_1}) such that $H_{i-1}$ is $n_{i}$-modularly embedded in $H_{i}$ for some $n_{i}\in\mathbb{N}$ such that $n_{i}\leq k$ and every $i = 1, \ldots , m$.
Consider the chain of subgroups
$$H\cap U = H_{0}\cap U \leq H_{1}\cap U \leq \cdots \leq H_{m-1}\cap U \leq H_{m}\cap U = U.$$

If $H_{i-1}\cap U \unlhd H_{i}\cap U$ for $i = 1, \ldots , m$, then $H\cap U$ is $k$-submodular in $U$.

Let's assume that $H_{j-1}\cap U\ntrianglelefteq H_{j}\cap U$ for some $j\in\{1, \ldots , m\}$. In that case $H_{j-1}\cap U\not= H_{j}\cap U$ and $H_{j-1}\ntrianglelefteq H_{j}$. Let $L= \mathrm{Core}_{H_{j}}(H_{j-1})$. Since $H_{j-1}$ is $n_{j}$-modularly embedded in $H_{j}$ for some natural number $n_{j}\leq k$, we have $|H_{j}:H_{j-1}| =p$ and $|H_{j}/L|=pq^{n_{j}}$ for some primes $p$ and $q$. Let $P\in \mathrm{Syl}_{p}(H_{j})$. Then $PL/L\in \mathrm{Syl}_{p}(H_{j}/L)$. From Lemma \ref{l2.2} it follows that $H_{j}/L=PL/L\cdot H_{j-1}/L$, $PL/L\unlhd H_{j}/L$, $H_{j-1}/L\in \mathrm{Syl}_{q}(H_{j}/L)$ and $H_{j-1}/L$ is a cyclic subgroup. We have two cases.

1. Let's assume that $p$ does not divide $|(H_{j}\cap U)L/L|$. By Sylow's theorem $(H_{j}\cap U)L/L\leq (H_{j-1}/L)^{xL}$ for some $x\in H_{j}$. Then $(H_{j}\cap U)L/L$ is cyclic. From
$$(H_{j-1}\cap U)/(L\cap U)\leq (H_{j}\cap U)/(L\cap U)\cong (H_{j}\cap U)L/L,$$
we conclude that $H_{j-1}\cap U\unlhd H_{j}\cap U$. We have obtained a contradiction with $H_{j-1}\cap U\ntrianglelefteq H_{j}\cap U$.

2. Suppose that $p$ divides $|(H_{j}\cap U)L/L|$. Since $PL/L\unlhd H_{j}/L$, we have $PL/L\leq (H_{j}\cap U)L/L$ by Sylow's theorem. Therefore $(H_{j}\cap U)L/L=PL/L\cdot (H_{j-1}\cap U)L/L.$
It follows that
$$p=|PL/L|=|(H_{j}\cap U)L/L:(H_{j-1}\cap U)L/L|=|(H_{j}\cap U):(H_{j-1}\cap U)|.$$
Let $D= \mathrm{Core}_{H_{j}\cap U}({H_{j-1}\cap U)}$. From $H_{j-1}\cap U\ntrianglelefteq H_{j}\cap U$ we have $D\not= H_{j-1}\cap U$ and $(H_{j}\cap U)/D=PD/D\cdot (H_{j-1}\cap U)/D$. Note that $(H_{j-1}\cap U)/(L\cap U)\cong (H_{j-1}\cap U)L/L\leq H_{j-1}/L$ is a cyclic $q$-group. Since $L\cap U\unlhd H_{j}\cap U$ and $L\cap U\leq H_{j-1}\cap U$, we have $L\cap U\leq D$. It follows that $(H_{j-1}\cap U)/D\cong (H_{j-1}\cap U)/(L\cap U)/D/(L\cap U)$ is a cyclic $q$-group. Thus  $|(H_{j-1}\cap U)/D|=q^{n}$ for some $n\in\mathbb{N}$ such that $n\leq n_{j}\leq k$. We have proved that $H_{j-1}\cap U$ is $n$-modularly embedded in $H_{j}\cap U$ for $n\leq k$. Thus  $H\cap U$ is $k$-submodular in $U$.

(2) The statement follows from (1) and Lemma \ref{l2.4}(2).
\end{proof}

\begin{lemma} \label{l2.6}
Let  $H$ be a subgroup of a group $G$ and $N\unlhd\ G$. 

$(1)$  If $H$ is $k$-submodular in $G$, then
$HN/N$ is $k$-submodular in $G/N$.

$(2)$ If $N\leq H$ and $H/N$ is $k$-submodular in $G/N$, then
$H$ is $k$-submodular in $G$.

$(3)$ The subgroup $HN$ is $k$-submodular in $G$ if and only if $HN/N$ is $k$-submodular in $G/N$.
\end{lemma}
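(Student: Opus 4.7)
The plan is to establish (1) first, since (2) will follow directly from Lemma \ref{l2.1} applied link-by-link to a witnessing chain, and (3) is a formal consequence of (1) and (2): the forward direction of (3) applies (1) to $HN$, while the backward direction uses that $HN$ contains $N$ and invokes (2).

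The heart of the argument for (1) is a one-link claim: \emph{if $A \leq B$, $A$ is $n$-modularly embedded in $B$, and $M \unlhd B$, then $AM/M$ is $n'$-modularly embedded in $B/M$ for some natural number $n' \leq n$.} Granting this, given a chain
\[
H = H_0 \leq H_1 \leq \cdots \leq H_m = G
\]
witnessing $k$-submodularity of $H$ in $G$, I would apply the one-link claim with $B = H_i$, $A = H_{i-1}$, $M = H_i \cap N$, and then transfer via the isomorphism $H_i/(H_i \cap N) \cong H_iN/N$ to conclude that $H_{i-1}N/N$ is $n'$-modularly embedded in $H_iN/N$ for some $n' \leq n_i \leq k$. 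The resulting chain $HN/N = H_0N/N \leq \cdots \leq H_mN/N = G/N$ then witnesses that $HN/N$ is $k$-submodular in $G/N$.

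For the one-link claim I would split cases. If $A \unlhd B$, then $AM \unlhd B$, so $AM/M \unlhd B/M$. Otherwise Definition \ref{d1} gives $|B : A| = p$ for some prime $p$, so $|B : AM|$ divides $p$ and equals $1$ or $p$. If $AM = B$, then $AM/M = B/M$ is trivially normal in itself and hence $n$-modularly embedded. If $|B : AM| = p$, then $AM = A$, i.e.\ $M \leq A$; combined with $M \unlhd B$, the identity $A^b \supseteq M^b = M$ for every $b \in B$ forces $M \leq \mathrm{Core}_B(A)$, so Lemma \ref{l2.1} applied with the normal subgroup $M$ inside $A \leq B$ converts the $n$-modular embedding of $A$ in $B$ into an $n$-modular embedding of $A/M$ in $B/M$, as required.

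The main technical point deserving care is that the parameter $n'$ at each link of the derived chain may a priori drop below $n_i$, but the only thing we need is the bound $n' \leq n_i \leq k$, which is preserved in every case. A second routine check is that the chain one obtains from (2) really sits inside $G$ with the correct modular-embedding data, but since $N \unlhd G$ implies $N \unlhd K_i$ for every $K_i$ in the chain above $H$, Lemma \ref{l2.1} applies at each link and no further obstruction arises.
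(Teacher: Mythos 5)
Your proposal is correct, and parts (2) and (3) are handled exactly as in the paper: (2) by applying Lemma \ref{l2.1} link by link to a chain for $H/N$ (noting $N\unlhd K_i$ for every member of the chain), and (3) as a formal consequence of (1) and (2). The difference lies in part (1). The paper argues directly inside $H_iN$: assuming $H_{i-1}N/N\ntrianglelefteq H_iN/N$, it deduces $H_i\cap N=H_{i-1}\cap N$, compares the cores $L=\mathrm{Core}_{H_i}(H_{i-1})$ and $T=\mathrm{Core}_{H_iN}(H_{i-1}N)$ via $LN\leq T$, and recomputes the index $|H_iN/N:H_{i-1}N/N|=p$ and the order $|(H_iN/N)/(T/N)|=pq^{n}$ with $n\leq n_i$ (non-nilpotency being automatic since $T\neq H_{i-1}N$). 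You instead isolate a one-link claim and exploit the dichotomy forced by the prime index: since $|H_i:H_{i-1}|=p$ in the non-normal case, $H_{i-1}(H_i\cap N)$ is either $H_i$ (trivial link) or $H_{i-1}$ (so $H_i\cap N\leq\mathrm{Core}_{H_i}(H_{i-1})$), and in the latter case Lemma \ref{l2.1} with the normal subgroup $H_i\cap N$ immediately gives the modular embedding of $H_{i-1}/(H_i\cap N)$ in $H_i/(H_i\cap N)$, which you then transport along the canonical isomorphism $H_i/(H_i\cap N)\cong H_iN/N$ (which indeed carries $H_{i-1}(H_i\cap N)/(H_i\cap N)$ onto $H_{i-1}N/N$ and preserves the defining data: normality, cores, indices, orders). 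This buys a shorter argument that reuses Lemma \ref{l2.1} for both (1) and (2) and avoids the explicit core/order computation, at the modest cost of the extra transfer-along-isomorphism step; in fact your route even shows the parameter does not drop at a nontrivial link, while the paper only needs (and only records) the bound $n\leq n_i\leq k$, which is all either argument requires.
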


\begin{proof}
(1) By Definition \ref{d2} there is a chain of subgroups (\ref{1_1}).
If $H_{i-1}N/N \unlhd H_{i}N/N$ for $i = 1, \ldots , m$, then $HN/N$ is $k$-submodular in $HN/N$.

Let's assume that $H_{i-1}N/N \ntrianglelefteq H_{i}N/N$ for some $i \in\{ 1, \ldots , m\}$. Then $H_{i-1}\ntrianglelefteq H_{i}$ and $H_{i-1}N\not= H_{i}N$. Let's denote $L=\mathrm{Core}_{H_{i}}(H_{i-1})$ and $T=\mathrm{Core}_{H_{i}N}(H_{i-1}N)$. Then $|H_{i}:H_{i-1}|=p$ and $|H_{i}/L|=pq^{n_{i}}$ for some $p, q\in \mathbb{P}$ and some $n_{i}\in\mathbb{N}$ such that $n_{i}\leq k$.
From
$$1\not=|H_{i}N/N:H_{i-1}N/N|=\frac{|H_{i}:H_{i-1}|}{|H_{i}\cap N:H_{i-1}\cap N|} $$
we have $H_{i}\cap N=H_{i-1}\cap N$ and $|H_{i}N/N:H_{i-1}N/N|=p$.
Since $H_{i}\cap N\unlhd H_{i}$ and $H_{i}\cap N\leq H_{i-1}$ it follows that $H_{i}\cap N\leq L$ and $|H_{i}N/LN|=|H_{i}/L|$. Then $pq^{n_{i}}=|H_{i}N/LN|=|H_{i-1}N/LN|\cdot p$ and $|H_{i-1}N/LN|=q^{n_{i}}$.
From $LN\leq T$ it follows that $|H_{i-1}N/T|=q^{n}$ for  $n\in\mathbb{N}$ and $n\leq n_{i}$.

Note that $T/N=\mathrm{Core}_{H_{i}N/N}(H_{i-1}N/N)\not=H_{i-1}N/N$. Therefore $|H_{i}N/N/T/N|=pq^{n}$.
We have proved that $H_{i-1}N/N$ is $n$-modularly embedded in $H_{i}N/N$ for $n\leq k$.
Thus  $HN/N$ is $k$-submodular in $G/N$.

(2) Let there be a chain of subgroups
$$H/N = H_{0}/N \leq H_{1}/N \leq \cdots \leq H_{m-1}/N \leq H_{m}/N = G/N$$
such that $H_{i-1}/N$ is $n_{i}$-modularly embedded in $H_{i}/N$ for some $n_{i}\in\mathbb{N}$ such that $n_{i}\leq k$ and every $i = 1, \ldots , m$. Since $N\unlhd H_{i}$ and $N\leq H_{i-1}$, Statement (2) follows from Lemma \ref{l2.1}.

(3) The statement follows from Statements (1) and (2).
\end{proof}

\begin{lemma} \label{l2.7}
Let $G$ be a soluble group. If a subgroup $H$ is $k$-submodular in $G$, then $H$ is $\mathfrak{U}_{k}$-subnormal in $G$.
\end{lemma}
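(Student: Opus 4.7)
The plan is to take a $k$-submodular chain $H=H_0\le H_1\le\cdots\le H_m=G$ for $H$ and verify that at each individual step $H_{i-1}$ is $\mathfrak U_k$-subnormal in $H_i$; then Lemma \ref{l1.8}(1) (transitivity of $\mathfrak F$-subnormality in a hereditary formation, noting $\mathfrak U_k$ is hereditary by Lemma \ref{l1.6}(1)) gives the conclusion. Since each $H_i$ inherits solubility from $G$, one may freely appeal to the structure of chief factors of soluble groups.

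I would split each link into two cases according to Definition \ref{d1}. In the \emph{normal} case $H_{i-1}\unlhd H_i$, I would refine the one-step normal inclusion through a chief series of $H_i$ passing through $H_{i-1}$: say $H_{i-1}=K_0\unlhd K_1\unlhd\cdots\unlhd K_t=H_i$, where every chief factor $K_j/K_{j-1}$ is an elementary abelian $p$-group (since $H_i$ is soluble). Each such elementary abelian $p$-group is supersoluble of exponent $p$, hence lies in $\mathfrak U_k$, so $K_j^{\mathfrak U_k}\le K_{j-1}$. Therefore $K_{j-1}$ is $\mathfrak U_k$-subnormal in $K_j$, and transitivity turns the refined chain into a witness that $H_{i-1}$ is $\mathfrak U_k$-subnormal in $H_i$.

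In the \emph{non-normal} case, Lemma \ref{l2.2} (strictly, applied inside the maximal situation obtained because $|H_i:H_{i-1}|=p$ is prime by Definition \ref{d1}) tells me that, with $D=\mathrm{Core}_{H_i}(H_{i-1})$, the quotient $H_i/D$ is a non-nilpotent group of order $pq^{n_i}$ in which the Sylow $p$-subgroup is normal of order $p$ and the Sylow $q$-subgroup $H_{i-1}/D$ is cyclic of order $q^{n_i}$, with $n_i\le k$. Such a group is a semidirect product $Z_p\rtimes Z_{q^{n_i}}$; it is supersoluble, and its exponent divides $pq^{n_i}$ with every prime power dividing the exponent at most $q^{n_i}$ or $p^1$, so it lies in $\mathfrak U_k$. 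Consequently $H_i^{\mathfrak U_k}\le D\le H_{i-1}$, which means that the single-step chain $H_{i-1}\le H_i$ already witnesses $\mathfrak U_k$-subnormality.

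Combining the two cases yields, for every $i$, a chain from $H_{i-1}$ to $H_i$ whose consecutive inclusions satisfy $H^{\mathfrak U_k}\le$ predecessor; concatenating these chains and invoking Lemma \ref{l1.8}(1) finishes the argument. The main conceptual point — and the only step that might require a moment of care — is verifying that the exceptional biprimary quotient in the non-normal case really belongs to $\mathfrak U_k$; everything else is routine refinement plus transitivity.
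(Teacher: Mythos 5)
Your proposal is correct and takes essentially the same route as the paper: the paper likewise refines each normal link by a series with prime-order (hence $\mathfrak{U}_{k}$) factors and, in the non-normal case, observes that $H_{i}/\mathrm{Core}_{H_{i}}(H_{i-1})$, of order $pq^{n_{i}}$ with $n_{i}\leq k$ and supersoluble by the structure from Lemma~\ref{l2.2} (which you rightly make explicit), lies in $\mathfrak{U}_{k}$, so $H_{i}^{\mathfrak{U}_{k}}\leq H_{i-1}$. The only cosmetic differences are your use of a chief series where the paper uses a composition series, and your appeal to Lemma~\ref{l1.8}(1) where the paper simply concatenates the chains.
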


\begin{proof}
By Definition \ref{d2} there is a chain  of subgroups (\ref{1_1}) such that $H_{i-1}$ is $n_{i}$-modularly embedded in $H_{i}$ for some natural number $n_{i}\leq k$ and every $i = 1, \ldots , m$. If $H= G$ then $H$ is $\mathfrak{U}_{k}$-subnormal in $G$. Assume that $H\not= G$. Then $H_{i-1}\not= H_{i}$ for some $i\in\{ 1, \ldots , m\}$.

Assume that  $H_{i-1}\unlhd H_{i}$. Since $G$ is soluble, we draw a composition series through $H_{i-1}$ and $H_{i}$:
$$H_{i-1} = R_{0}< R_{1}< \cdots < R_{s-1} < R_{s} = H_{i},$$
there $R_{j-1}$ is normal in $R_{j}$ and $|R_{j}:R_{j-1}|\in\mathbb{P}$, $j=1,\ldots, s$. Since $R_{j-1}=\mathrm{Core}_{R_{j}}(R_{j-1})$, we have $R_{j}/R_{j-1}\in\mathfrak{U}_{k}$. Then $R_{j}^{\mathfrak{U}_{k}}\leq R_{j-1}$.

Assume that $H_{i-1}\ntrianglelefteq H_{i}$.  Then  $\mathrm{Core}_{H_{i}}(H_{i-1})\not=H_{i-1}$. We have that $|H_{i} : H_{i-1}|=p$ and $|{H_{i}}/\mathrm{Core}_{H_{i}}(H_{i-1})|=pq^{n_{i}}$ for some $p, q\in\mathbb{P}$ and a natural number $n_{i}\leq k$. Thus  ${H_{i}}/\mathrm{Core}_{H_{i}}(H_{i-1})\in \mathfrak{U}_{k}$ and $H_{i}^{\mathfrak{U}_{k}}\leq H_{i-1}$.
Therefore $H$ is $\mathfrak{U}_{k}$-subnormal in $G$.
\end{proof}

\begin{lemma} \label{l2.8}
Let $H$ is a Sylow $p$-subgroup of a group $G$ and let $p$ be the greatest prime divisor of $|G|$. If $H$ is $k$-submodular in $G$ then $H$ is normal in $G$.
\end{lemma}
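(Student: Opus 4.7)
The plan is to induct on $|G|$. Take a chain $H=H_{0}\leq H_{1}\leq \cdots \leq H_{m}=G$ witnessing that $H$ is $k$-submodular in $G$, with $H_{i-1}$ being $n_{i}$-modularly embedded in $H_{i}$. If $H=G$ the claim is trivial, so assume $m\geq 1$ and look at the last link $H_{m-1}\trianglelefteq? G$. The argument splits into two cases, and in each case I will reduce to a proper normal subgroup of $G$ in which $H$ remains a Sylow $p$-subgroup and remains $k$-submodular.

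In the first case, $H_{m-1}\unlhd G$. Then $H$ is a Sylow $p$-subgroup of $H_{m-1}$, the chain $H_{0}\leq\cdots\leq H_{m-1}$ shows $H$ is $k$-submodular in $H_{m-1}$, and $p$ is still the greatest prime divisor of $|H_{m-1}|$. The induction hypothesis gives $H\unlhd H_{m-1}$; then $H$ is the unique (hence characteristic) Sylow $p$-subgroup of the normal subgroup $H_{m-1}$ of $G$, so $H\unlhd G$.

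In the second case, $H_{m-1}$ is not normal in $G$. Set $D=\mathrm{Core}_{G}(H_{m-1})$. By Definition~\ref{d1} and Lemma~\ref{l2.2}, $G/D$ is a non-nilpotent group of order $p_{m}q^{n_{m}}$ with a normal Sylow $p_{m}$-subgroup of order $p_{m}$ and a cyclic Sylow $q$-subgroup $H_{m-1}/D$ of order $q^{n_{m}}$; in particular $q\mid p_{m}-1$, so $q<p_{m}$. The key observation is that $p_{m}\neq p$: since $H\leq H_{m-1}$, the index $|G:H|$ is divisible by $|G:H_{m-1}|=p_{m}$, but $|G:H|$ is coprime to $p$ because $H$ is a Sylow $p$-subgroup. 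By maximality of $p$ among prime divisors of $|G|$ we conclude $q<p_{m}<p$, hence $\gcd(|G/D|,p)=1$. The $p$-subgroup $H$ therefore maps trivially into $G/D$, i.e. $H\leq D$.

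Now $D\unlhd G$ with $D\leq H_{m-1}<G$, and $H$ is a Sylow $p$-subgroup of $D$ with $p$ still the largest prime divisor of $|D|$. Lemma~\ref{l2.5}(1) applied to $U=D$ yields that $H=H\cap D$ is $k$-submodular in $D$, so the induction hypothesis gives $H\unlhd D$. As above, $H$ is characteristic in $D$, whence $H\unlhd G$. The main technical point I expect to require care is the prime-comparison step in Case~2 — confirming that the two primes $p_{m},q$ appearing in $G/D$ must both be strictly smaller than $p$, which is exactly what lets the $p$-subgroup $H$ be swallowed by $D$ so that the induction can proceed.
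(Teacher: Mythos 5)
Your proof is correct, but in the crucial case it takes a different route from the paper. Both arguments induct on $|G|$ and split on whether the penultimate term of the chain is normal in $G$ (in Case~1 you should, like the paper implicitly does, assume the chain is strict so that $H_{m-1}<G$ and the induction hypothesis applies; this is a harmless adjustment). The paper first applies induction inside $M=H_{m-1}$ to get $H\unlhd M$, and in the non-normal case exploits $M\leq N_{G}(H)$ together with the maximality of $M$: if $N_{G}(H)\neq G$ then $|G:N_{G}(H)|=|G:M|=r$, and Sylow's theorem forces $r\equiv 1\ (\mathrm{mod}\ p)$, contradicting $r<p$. You instead show that $p$ does not divide $|G/\mathrm{Core}_{G}(H_{m-1})|=p_{m}q^{n_{m}}$ (using $p_{m}\mid |G:H|$ and $q<p_{m}$), so $H$ is swallowed by the core $D$, and then you restart the induction inside the proper normal subgroup $D$, invoking Lemma~\ref{l2.5}(1) to keep $H$ $k$-submodular there. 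Your version thus needs the (nontrivial) intersection lemma~\ref{l2.5}(1) and the fact that $q\mid p_{m}-1$ — the latter is not literally in the statement of Lemma~\ref{l2.2}, but it follows in one line, since non-nilpotency of $G/D$ forces the cyclic Sylow $q$-subgroup to act nontrivially on the normal subgroup of order $p_{m}$, embedding a nontrivial $q$-group in $\mathrm{Aut}(Z_{p_{m}})\cong Z_{p_{m}-1}$. The paper's treatment of the non-normal case is more elementary (only Sylow's congruence, no intersection lemma), while yours avoids any normalizer counting and reduces cleanly to a smaller normal subgroup; both are valid.
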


\begin{proof}
We proceed by induction on $|G|$. We may assume that $H\not = G$ and there exists a chain of subgroups (\ref{1_1}) such that $H_{i-1}$ is $n_{i}$-modularly embedded in $H_{i}$ for some natural number $n_{i}\leq k$ and $i = 1, \ldots , m$. By induction, $H$ is normal in $H_{m-1} = M$. By Definition~\ref{d2} either $M\unlhd G$ or $M\not= \mathrm{Core}_{G}(M)$, $|G:M| =r$ and $|G/\mathrm{Core}_{G}(M)|=rq^{n_{m}}$ for some $r, q\in\mathbb{P}$ and some natural number $n_{m}\leq k$. In the first case, $H$ is normal in $G$. Therefore, suppose that $M\not= \mathrm{Core}_{G}(M)$. In this case, $r\not= q$. If $N_{G}(H) \not= G$ then, by Sylow's theorem,
$|G : M| = |G : N_{G}(H)| = r \equiv 1 (\mathrm{mod}\ p)$. We get a contradiction to $r < p$. Therefore, $N_{G}(H) = G$.
\end{proof}

\begin{lemma} \label{l3.1}
Let $\mathfrak{F}$ be the class of all groups in which every Sylow subgroup is $k$-submodular. Let $G$ be a group. Then the following assertions hold.

$(1)$ $\mathfrak{N}\subseteq\mathfrak{F}\subseteq
\mathrm{w}\mathfrak{U}$.

$(2)$ If $G\in\mathfrak{F}$ and $N\unlhd G$, then $G/N\in\mathfrak{F}$.

$(3)$ If $G/N_{1}\in\mathfrak{F}$ and $G/N_{2}\in\mathfrak{F}$ for any $N_{i}\unlhd G$, $i = 1,2$, then $G/N_{1}\cap N_{2}\in\mathfrak{F}$.

$(4)$ A direct product of groups from $\mathfrak{F}$ lies in $\mathfrak{F}$.

$(5)$ If $G\in\mathfrak{F}$ and $U$ is a subgroup of $G$, then $U\in\mathfrak{F}$.
\end{lemma}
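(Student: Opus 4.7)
The plan is to prove the five items in the order (2), (5), (4), (3), (1), so that (3) can lean on the direct product and subgroup properties, and the structural statement (1) can use closure under quotients. Throughout I will translate the condition ``Sylow subgroup $Q$ of $K$ is $k$-submodular in $K$'' using Lemmas~\ref{l2.4}, \ref{l2.5}, \ref{l2.6}, and the Sylow correspondence for normal subgroups and intersections.

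For (2), given a Sylow subgroup $T$ of $G/N$ I write $T = QN/N$ with $Q \in \mathrm{Syl}_q(G)$; since $Q$ is $k$-submodular in $G$, Lemma~\ref{l2.6}(1) delivers $QN/N$ as $k$-submodular in $G/N$. For (5), given $U\leq G$ and $Q\in\mathrm{Syl}_q(U)$, I choose $Q^*\in\mathrm{Syl}_q(G)$ containing $Q$; by Lemma~\ref{l2.5}(1) the subgroup $Q^*\cap U$ is $k$-submodular in $U$, and since $Q\leq Q^*\cap U$ is already a Sylow $q$-subgroup of $U$, equality holds. For (4) it suffices, by induction on the number of factors, to handle $G=A\times B$ with $A,B\in\mathfrak{F}$; for a Sylow $p$-subgroup $P_A\times P_B$ of $G$, Lemma~\ref{l2.6}(2) applied to $B\trianglelefteq P_A\times B$ (using $(P_A\times B)/B \cong P_A$ is $k$-submodular in $G/B\cong A$) gives that $P_A\times B$ is $k$-submodular in $G$, and similarly $P_A\times P_B$ is $k$-submodular in $P_A\times B$; then Lemma~\ref{l2.4}(2) concludes. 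For (3), the diagonal monomorphism $G/(N_1\cap N_2)\hookrightarrow G/N_1\times G/N_2$ combined with (4) and (5) gives the formation property.

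Part (1) will be the main obstacle, and its proof proceeds in two stages. The inclusion $\mathfrak{N}\subseteq\mathfrak{F}$ is immediate because every Sylow subgroup of a nilpotent group is normal, hence $k$-submodular. For $\mathfrak{F}\subseteq\mathrm{w}\mathfrak{U}$, I use Lemma~\ref{l1.5}(1) and must verify that every Sylow subgroup of $G\in\mathfrak{F}$ is $\mathbb{P}$-subnormal. I induct on $|G|$: let $p$ be the largest prime divisor of $|G|$ and $P\in\mathrm{Syl}_p(G)$. Lemma~\ref{l2.8} gives $P\unlhd G$, and since $P$ is a $p$-group a composition refinement through normal subgroups yields prime indices, so $P$ is $\mathbb{P}$-subnormal in $G$. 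By (2), $G/P\in\mathfrak{F}$, so by induction $G/P\in\mathrm{w}\mathfrak{U}$. For any other prime $q\in\pi(G)$ and $Q\in\mathrm{Syl}_q(G)$, the image $QP/P$ is a Sylow $q$-subgroup of $G/P$ and is therefore $\mathbb{P}$-subnormal in $G/P$; pulling the chain back gives $QP$ $\mathbb{P}$-subnormal in $G$. Finally $Q$ is $\mathbb{P}$-subnormal in $QP$: using a chief series $1=P_0<P_1<\dots<P_s=P$ of the $p$-group $P$, the chain $Q=QP_0<QP_1<\dots<QP_s=QP$ has each index equal to $|P_i:P_{i-1}|=p$ because $Q\cap P=1$. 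Transitivity of $\mathbb{P}$-subnormality then finishes the argument.

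The trickiest point I expect to have to check carefully is the base step in (1) for the Sylow $p$-subgroup of the largest prime: the appeal to Lemma~\ref{l2.8} is what breaks the potential circularity between ``Sylow subgroups are $k$-submodular'' and solubility, and everything else in (1) then flows from quotient closure (2) together with the standard pull-back of $\mathbb{P}$-subnormal chains through the normal Sylow subgroup $P$.
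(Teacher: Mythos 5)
Your parts (2)--(5) are correct. For (2) and (5) you argue exactly as the paper does (Lemmas~\ref{l2.6}(1) and \ref{l2.5}(1) with the Sylow correspondence). For (3)/(4) you take a different but valid route: you prove (4) directly for $A\times B$ via Lemma~\ref{l2.6}(2) and Lemma~\ref{l2.4}(2) and then deduce (3) from the subdirect embedding $G/(N_1\cap N_2)\hookrightarrow G/N_1\times G/N_2$ together with (4) and (5), whereas the paper proves (3) first, using Lemmas~\ref{l2.5}(2), \ref{l2.6}(2) and the identity $QN_1\cap QN_2=Q(N_1\cap N_2)$ from \cite[Theorem~A.6.4(b)]{DH}, and obtains (4) as a consequence; both are sound and there is no circularity in your order of proof.

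The genuine gap is in your argument for $\mathfrak{F}\subseteq\mathrm{w}\mathfrak{U}$ in (1), at the step ``$Q$ is $\mathbb{P}$-subnormal in $QP$''. A chief series $1=P_0<\dots<P_s=P$ of the $p$-group $P$ has terms normal in $P$ but in general not normalized by $Q$, so the sets $QP_i$ need not be subgroups at all; and if you replace it by a series that $Q$ does normalize (say a chief series of $QP$ below $P$), the factors are elementary abelian of order $p^{a_i}$, so the indices $|QP_i:QP_{i-1}|$ need not be prime. As written, your argument would equally ``prove'' that a Sylow $3$-subgroup of $A_4$ is $\mathbb{P}$-subnormal in $A_4$, which is false (it is maximal of index $4$). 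What actually saves the step is the hypothesis you have not used at that point: $Q$ is $k$-submodular in $G$, hence $k$-submodular in $QP$ by Lemma~\ref{l2.5}(1); since $|QP|=p^aq^b$, Burnside gives that $QP$ is soluble, and then every normal step in a $k$-submodular chain refines through a composition series into steps of prime index (the refinement used in Lemma~\ref{l2.7}), so $Q$ is indeed $\mathbb{P}$-subnormal in $QP$. The same criticism applies, more mildly, to your earlier sentence that $P$ is $\mathbb{P}$-subnormal in $G$ ``since $P$ is a $p$-group'': being a $p$-group says nothing about the indices between $P$ and $G$; the correct reason is that $G/P\in\mathrm{w}\mathfrak{U}$ by induction, hence is Ore dispersive and soluble (Lemma~\ref{l1.5}(2)), so $P\unlhd G$ refines to a prime-index chain. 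With these repairs your induction works; an even shorter route is to prove by induction (via Lemma~\ref{l2.8} and item (2)) that every $G\in\mathfrak{F}$ is soluble, then apply Lemma~\ref{l2.7} to each Sylow subgroup and conclude with Lemma~\ref{l1.5}(1). The paper itself dismisses (1) with a one-line reference to the definitions, so your attempt to supply a real argument is welcome, but in its present form this step fails.
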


\begin{proof}
Statement (1) follows from Definitions \ref{d1} and \ref{d2}.

Statement (2) follows from Lemmas \ref{l2.4}, \ref{l2.6} and Sylow's theorem.

(3) Let $G$ be a group of minimal order such that $G/N_{i}\in\mathfrak{F}$, $N_{i}\unlhd G$, $i = 1,2$, but $G/N_{1}\cap N_{2}\not\in\mathfrak{F}$.
We can assume that $N_{1}\cap N_{2} = 1$. Let $Q\in\mathrm{Syl}_{q}(G)$. Then $QN_{i}/N_{i}\in\mathrm{Syl}_{q}(G/N_{i})$ and $QN_{i}/N_{i}$ is $k$-submodular in $G/N_{i}$ for $i=1, 2$. By Lemma~\ref{l2.6}(2) $QN_{i}$ is $k$-submodular in $G$ for $i=1, 2$. By Lemma~\ref{l2.5}(2) $QN_{1}\cap QN_{2}$ is $k$-submodular in $G$. By \cite[Theorem~A.6.4(b)]{DH} $QN_{1}\cap QN_{2}=Q(N_{1}\cap N_{2})=Q$ is $k$-submodular in $G$. Therefore $G=G/N_{1}\cap N_{2}\in\mathfrak{F}$. The contradiction thus obtained completes the proof of Statement (3).

Statement (4) follows from (3).

Statement (5) follows from Lemma \ref{l2.5}(1) and Sylow's theorem.
\end{proof}

\section{Proofs of main results}

\textbf{Proof of Theorem \ref{t3.1}.}

\begin{proof}
Assume that $(1)$ holds.
By Lemma \ref{l2.3} $G$ is supersoluble.

Let $H/K$ be any complemented chief factor of $G$. Then $G=HM$ for some subgroup $M$ of $G$ and $H\cap M=K$. Since $G/K$ is supersoluble, it follows that $|H/K|=p\in\mathbb{P}$. Then $M$ is maximal in $G$. Let $D=\mathrm{Core}_{G}(M)$.

If $M=D$, then $G= HM\leq C_{G}(H/K)$ and $G/C_{G}(H/K)=1$.

Let $M\not=D$. Then $G/D=HD/D\cdot M/D$, and $K=H\cap D$. From $HD/D\cong H/K$ it follows that $|HD/D|=p$ and $HD/D$ is a minimal normal subgroup of $G/D$.
By
Lemma~\ref{l1.1}(1) $HD/D= C_{G/D}(HD/D)$.
Since $G/D$ is non-nilpotent, by Lemma \ref{l2.2} $M/D\in \mathrm{Syl} _{q}(G/D)$ and $M/D$ is a cyclic group of order $q^{n}$ for some prime $q$ and some natural number $n\leq k$. Note that $HD\leq C_{G}(HD/D)$. Then $G/C_{G}(HD/D)\cong G/D/C_{G/D}(HD/D)\cong M/D$. From  \cite[Appendix B, Theorems 1 and 3]{Wei} it follows that $G/C_{G}(H/K)\cong \mathrm{Aut}_{G}(H/K)\cong \mathrm{Aut}_{G}(HD/D)\cong G/C_{G}(HD/D)$. Thus  $(1) \Rightarrow (2)$.


Assume that $(2)$ holds. Let $M_{1}$ and $M_{2}$ be maximal subgroups of $G$. If $G$ is primary cyclic group then $M_{1}=M_{2}$ is the only one maximal subgroup of $G$ and (3) holds. Let $G$ is not primary cyclic group and $M_{1}\not= M_{2}$.
Let $K=\mathrm{Core}_{G}(M_{1})$.

If $K\nleq M_{2}$ then $G=KM_{2}=M_{1}M_{2}$. From $G\in\mathfrak{U}$ it follows that $|G:M_{i}|=|M_{3-i}: M_{1}\cap M_{2}|\in\mathbb{P}$, $i=1, 2$. The subgroup $M_{1}\cap M_{2}$ is maximal in $M_{i}$, $i=1, 2$, so (3) it true.

Let $K\leq M_{2}$.
Then $K\not=M_{1}$. By
Lemma~\ref{l1.1}(1) $G/K=N/K\cdot M/K$, where $N/K$ is a minimal normal subgroup in $G/K$, $(N/K)\cap (M/K)=1$ and $N/K= C_{G/K }(N/K)$. Note that $|N/K|=p$ is a prime number because $G$ is supersoluble. By (2) we have $M/K\cong G/K/C_{G/K}(N/K)\cong G/C_{G}(N/K) $ is a cyclic group of order $q^{n}$ for some $q\in\mathbb{P}$ and a natural number $n\leq k$.

I. If $p$ divides $|M_{2}/K|$, then $N/K\leq M_{2}/K$ and $M_{2}/K=N/K\cdot(M_{1}\cap M_{2})/K$. From $|M_{2}|=p\cdot|M_{1}\cap M_{2}|$ we have that $M_{1}\cap M_{2}$ is maximal in $M_{2}$. From $G=M_{2}M_{1}$ and $|G: M_{2}|=|M_{1}:M_{1}\cap M_{2}|=q$ it follows that $M_{1}\cap M_{2}$ is maximal in $M_{1}$. So (3) it true.

II. If $p$ does not divide $|M_{2}/K|$, then $M_{2}/K$ is a $q$-group. By Sylow's theorem $M_{2}/K=M_{1}^{x}/K$ for some $x\in G$. Then $G=\langle M_{1}, M_{1}^{x}\rangle$ and $M_{1}\cap M_{1}^{x} < M_{1}$, $M_{1}\cap M_{1}^{x} < M_{1}^{x}$. Now $(M_{1}\cap M_{1}^{x})/K$ is normal subgroup in cyclic subgroup $M_{i}/K$, $i= 1, 2$. Then from $M_{1}\leq N_{G}(M_{1}\cap M_{1}^{x})$  and $M_{1}^{x}\leq N_{G}(M_{1}\cap M_{1}^{x})$ we have that $G=\langle M_{1}, M_{1}^{x}\rangle=N_{G}(M_{1}\cap M_{1}^{x})$. Therefore $K=M_{1}\cap M_{1}^{x}$. Since $M_{1}/K$ and $M_{1}^{x}/K$ are cyclic, $|M_{1}^{x}/K|=|M_{1}/K|=q^{n}$, we have that $K=M_{1}\cap M_{1}^{x}$ is an $n$-maximal subgroup in $M_{1}$ and $M_{1}^{x}$. We have proven $(2)\Rightarrow (3)$.

Assume that $(3)$ holds. Let $G$ be a group of the least order for which (3) holds and (1) does not. Then $G$ has a maximal subgroup $M$ such that is not $k$-submodular.
Let $N$ is a minimal normal subgroup of $G$. Since (3) holds for $G/N$ we have that (1) holds for $G/N$. By Lemma~\ref{l2.6}(2) and a choose of $M$ it follows that $\mathrm{Core}_{G}(M)=1$. By
Lemma~\ref{l1.1} $G=NM$, $N=C_{G}(N)$ is a unique minimal normal subgroup of $G$, $|N|=p^{s}$, $p\in \mathbb{P}$, $s\in \mathbb{N}$ and $N\cap M=1$. Let $R$ is a maximal subgroup of $M$. Then $|M:R|\in \mathbb{P}$ since $M\cong G/N\in\mathfrak{U}$ by Lemma~\ref{l2.3}(1). A subgroup $H=NR$ is maximal in $G$. By (3) $H\cap M$ is $m$-maximal subgroup of $H$ and of $M$, $|H:H\cap M |=q_{1}^{m}$ and $|M:H\cap M |=q_{2}^{m}$ for some $m\in\mathbb{N}$, $m\leq k$,  $q_{i}\in\mathbb{P}$, $i=1, 2$. Since $H\cap M=R$ is a maximal subgroup of $M$, we have that $m=1$. From $p^{s}=|H: R|=q_{1}^{m}$ it follows that $p=q_{1}$ and $s=1$. So $|N|=p$ and $G$ is supersoluble.

Since $M\cong G/C_{G}(N)$ is isomorphic to a subgroup of $\mathrm{Aut}(Z_{p})\cong Z_{p-1}$, the subgroup $M$ is cyclic. From $M=N_{G}(M)$ it follows that $M\not= M^{x}$ for some $x\in G$ and $G=\langle M, M^{x}\rangle$. Since $M\cap M^{x}\unlhd M$ and $M\cap M^{x}\unlhd M^{x}$ we have that $M\cap M^{x}\unlhd \langle M, M^{x}\rangle=G$ and so $M\cap M^{x}=1$. By (3) $1=M\cap M^{x}$ is an $n$-maximal subgroup of $M$ and $|M|=|M:M\cap M^{x}|=q^{n}$ for some $q\in\mathbb{P}$ and some $n\in\mathbb{N}$, $n\leq k$. By Lemma~\ref{l2.2} $M$ is $k$-submodular in $G$; a contradiction with choice $G$. We have proven $(3)\Rightarrow (1)$.
\end{proof}

\textbf{Proof of Theorem \ref{t3.2}.}

\begin{proof}
Assume that $(1)$ holds. If $M$ is a maximal subgroup of $A\leq G$ then $M$ is $k$-submodular in $G$  by (1). From Lemma~\ref{l2.5}(1) $M=M\cap A$ is $k$-submodular in $A$. Thus  $(1) \Rightarrow (2)$.

Assume that $(2)$ holds. By Lemma \ref{l2.3}, $G$ is supersoluble. Let $H/K$ be the chief factor of $G$. Then $|H/K|=p$ is prime. There is a Hall $p'$-subgroup $R/K$ in $G/K$ because $G/K$ is supersoluble. Then $G/K=P/K\cdot R/K$ for some $P/K\in \mathrm{Syl}_{p}(G/K)$. Let $A/K=H/K\cdot R/K$. Then $A=HR$, $H\cap R=K$ and $H/K$ are the chief factor in $A$. By Theorem \ref{t3.1}, $A/C_{A}(H/K)$ is either $1$ or a cyclic group of order $q^{n}$ for some $q\in\mathbb{P}$ and $n\in\mathbb{N}$ such than  $n\leq k$. Since $H\leq C_{A}(H/K)$, we have
$A=C_{A}(H/K)R$ and
$$A/C_{A}(H/K)\cong R/R\cap C_{A}(H/K)=R/C_{R}(H/K).$$
On the other hand, from $H/K\leq Z(P/K)$ it follows that $P\leq C_{G}(H/K)$. Then $G=PR=C_{G}(H/K)R$ and
$$G/C_{G}(H/K)\cong R/R\cap C_{G}(H/K)=R/C_{R}(H/K).$$
Thus  it has been proven that $(2) \Rightarrow (3)$.

Assume that $(3)$ holds. Let $K\leq G$ and $M$ is a maximal subgroup of $\langle M, K\rangle$. Let's denote $A=\langle M, K\rangle$. By (3) $A\in\mathfrak{U}$. Then $|A:M|=p$ for some $p\in\mathbb{P}$.

If $M\unlhd A$ we have that $A=MK$ and $|A:M|=|K:M\cap K|=p$. So (4) holds.

Let $M\not=\mathrm{Core}_{A}(M)=C$. By
Lemma~\ref{l1.1}(1) $A/C=N/C\cdot M/C$, where $N/C= C_{G/C }(N/C)$ is a unique minimal normal subgroup of $A/C$ and $(N/C)\cap (M/C)=1$. By (3)  $M/C\cong A/C/C_{A/C}(N/C)\cong A/C_{A}(N/C)$ is a cyclic group of order $q^{n}$ for some $q\in\mathbb{P}$ and a natural number $n\leq k$.

I. If $p$ divides $|KC/C|$ then $N/C\leq KC/C$. Since $A/C=N/C\cdot M/C$ then  $KC/C=N/C\cdot(M\cap K)C/C$. From which it follows that $|K:M\cap K|=p$ and (4) holds.

II. If $p$ does not divide $|KC/C|$ then $KC/C$ is a $q$-group. By Sylow's theorem $KC/C\leq M^{x}/C$ for some $x\in A$. Then $KC/C$ is cyclic. Since $M$ is maximal in $A=\langle M, K\rangle$ we have that $M\cap K\not= K$. From $(M\cap K)C/C\cong M\cap K/C\cap K\not= K/C\cap K\cong KC/C$ it follows that $|K: M\cap K|=|KC/C: (M\cap K)C/C|=q^{m}$ for some $m\in\mathbb{N}$ with $m\leq n\leq k$. We have proven $(3)\Rightarrow (4)$.

Assume that $(4)$ holds. Let $G$ be a group of the least order for which (4) holds and (1) does not. If $W$ is a maximal subgroup of $G$ then from the maximality $W$ in $\langle W, G\rangle$ and from (4) we have that $W=W\cap G$ is $n$-maximal subgroup of $G$ for some natural number $n\leq k$. Therefore $n=1$ and $|G:W|\in\mathbb{P}$.  By Lemma~\ref{l1.2} $G$ is supersoluble.

Since (4) holds for any $A< G$ then (1) holds for $A$. So $G$ has a maximal subgroup $M$ such that is not $k$-submodular in $G$. Then $M\not= \mathrm{Core}_{G}(M)=C$. By
Lemma~\ref{l1.1} $G/C$ has a unique minimal normal subgroup $N/C=C_{G/C}(N/C)$, $G/C=N/C\cdot M/C$ and $(N/C)\cap (M/C)=1$. From $G\in\mathfrak{U}$ it follows that $|N/C|=p\in\mathbb{P}$. Then $M/C$ is cyclic since $M/C\cong G/C/C_{G/C}(N/C)$.
Let's consider two cases

I. $|\pi(M/C|\geq 2$. Let $M_{1}/C$ be a maximal subgroup of $M/C$. Since $M/C\in\mathfrak{U}$ we have that $|M/C:M_{1}/C|=r\in\mathbb{P}$. Thence $M_{1}\not= C$. Let's denote $H/C=N/C\cdot M_{1}/C$. Then $H/\mathrm{Core}_{H}(M_{1})=N\mathrm{Core}_{H}(M_{1})/\mathrm{Core}_{H}(M_{1})\cdot M_{1}/\mathrm{Core}_{H}(M_{1})$ and $C\leq \mathrm{Core}_{H}(M_{1})$. From $(\mathrm{Core}_{H}(M_{1})/C)\cap (N/C)=C/C$ it follows that $\mathrm{Core}_{H}(M_{1})/C\leq C_{H/C}(N/C)\leq C_{G/C}(N/C)=N/C$. Therefore $\mathrm{Core}_{H}(M_{1})=C$. The statement (1) holds for $H$ and $M_{1}$ is maximal in $H$. Thence $M_{1}$ is $k$-submodular in $H$. Since $M_{1}/C\ntrianglelefteq H/C$, by Lemma~\ref{l2.2} $H/C$ is non-nilpotent, $|H/C|=pq^{n}$ for some prime $q$ and a natural number $n\leq k$, and $|M_{1}/C|=q^{n}$. Therefore $|M/C|=rq^{n}$. If $M_{2}/C$ is the maximal subgroup of $M/C$ and $|M/C: M_{2}/C|=q$, then reasoning as above, it is easy to show that $|M_{2}/C|=r$. Then $|M/C|=qr$ and $|G/C|=pqr$.

From $M/C=N_{G/C}(M/C)$ it follows that $M/C\not= M^{x}/C$ for some $x\in G$ and $G/C=\langle M/C, M^{x}/C\rangle$. Since $(M/C)\cap (M^{x}/C)\unlhd M/C$ and $(M/C)\cap (M^{x}/C)\unlhd M^{x}/C$ we have that $(M/C)\cap (M^{x}/C)\unlhd \langle M/C, M^{x}/C\rangle=G/C$ and so $(M/C)\cap (M^{x}/C)=C/C$. Then $M\cap M^{x}=C$ and $|M:M\cap M^{x}|=|M/C|=qr$. On the other hand (4) is satisfied for $G$. From the maximality $M^{x}$ in $\langle M^{x}, M\rangle$ we have that $M^{x}\cap M$ is $n$-maximal subgroup of $M$ and $|M: M^{x}\cap M|=s^{n}$ for some prime $s$ and a natural number $n\leq k$. We got a contradiction.

II. $|\pi(M/C)|< 2$. Since $G$ is not nilpotent, $|M/C|=q^{n}$ for some prime $q\not= p$ and some $n\in\mathbb{N}$. Let us show that $n\leq k$. Since $M\ntrianglelefteq G$ we have that $M\not = M^{y}$ and $G=\langle M, M^{y} \rangle$ for some $y\in G$.
By (3) $M\cap M^{y}$ is $m$-maximal subgroup of $M$ and $|M: M\cap M^{y}|=q^{m}$ for some natural number $m\leq k$. Since $M/C$ is cyclic it follows that $(M\cap M^{y})/C=(M/C)\cap (M^{y}/C)\unlhd M/C$ and $(M\cap M^{y})/C=(M/C)\cap (M^{y}/C)\unlhd M^{y}/C$. Thence $G/C=\langle M/C, M^{y}/C \rangle\leq N_{G/C}((M\cap M^{y})/C)$. So $M\cap M^{y}\unlhd G$. From which it follows that $M\cap M^{y}=C$. Then $q^{n}=q^{m}$ and $n= m\leq k$. Therefore $M$ is $k$-submodular in $G$; a contradiction with choice $G$. We have proven $(4)\Rightarrow (1)$.
\end{proof}

\textbf{Proof of Theorem \ref{t3.3}.}

\begin{proof}
(1) By Lemma \ref{l2.3}(1) $\mathfrak{X}$ consists of supersoluble groups.

Let $G\in \mathfrak{X}$ and $N\unlhd G$. For any maximal subgroup $M/N$ of $G/N$, the subgroup $M$ is maximal in $G$. Then $M$ is $k$-submodular in $G$ by the choice of $G$. By Lemma~\ref{l2.6}(1) $M/N$ is $k$-submodular in $G/N$. Thus $G/N\in \mathfrak{X}$ and $\mathfrak{X}$ is a homomorph.

Let $G/\mathrm{Core}_{G}(M)\in \mathfrak{X}$ for any maximal subgroup $M$ of $G$. Then $M/\mathrm{Core}_{G}(M)$ is $k$-submodular in  $G/\mathrm{Core}_{G}(M)$. By Lemma~\ref{l2.6}(2) $M$ is $k$-submodular in $G$. We conclude that $G\in \mathfrak{X}$. Thus $\mathfrak{X}$ is a Schunck class.


(2) It is obvious that $\mathfrak{Y}$ is hereditary.

Let us prove that $\mathfrak{Y}$ is a homomorph. Take $G\in \mathfrak{Y}$ and $N\unlhd G$. Let $A/N$ be any subgroup of $G/N$, and let $W/N$ be a maximal subgroup of $A/N$. From the maximality of $W$ in $A\leq G\in \mathfrak{Y}$ it follows that $W$ is $k$-modular in $A$. By Lemma~\ref{l2.6}(1) $W/N$ is $k$-submodular in $A/N$. Hence $G/N\in \mathfrak{Y}$ and $\mathfrak{Y}$ is a homomorph.

Suppose that there are groups $G$ such that $G/N_{i}\in\mathfrak{Y}$, $i=1, 2$, but $G/N_{1}\cap N_{2}\not\in\mathfrak{Y}$. Let us consider a group $G$ of minimal order with this property.

We can assume that $N_{1}\cap N_{2}=1$ and  $N_{i}$ is a minimal normal subgroups of $G$, $i=1, 2$.
Since $\mathfrak{Y}\subseteq \mathfrak{U}$ and $\mathfrak{U}$ are a formation, we have $G\in\mathfrak{U}$. Then $N_{i}$ is a cyclic group of prime order, $i=1, 2$.

Let $H/K$ be a chief factor of $G$. From $K\leq K(H\cap N_{1})\leq H$ it follows that either $K=K(H\cap N_{1})$ or $H=K(H\cap N_{1})$.

1. If $K=K(H\cap N_{1})$ we have that $H/K = H/K(H\cap N_{1}) = H/H\cap KN_{1} \cong HN_{1}/N_{1}/KN_{1}/N_{1}$ is a chief factor of $G/N_{1}$. Then  $G/C_{G}(H/K)\cong G/N_{1}/C_{G/N_{1}}(HN_{1}/N_{1}/KN_{1}/N_{1})$ by \cite[Appendix B, Theorems 1-3]{Wei}.

2. If $H=K(H\cap N_{1})$ then $H\cap N_{1}\not=K\cap N_{1}$. Since $1\leq K\cap N_{1} < H\cap N_{1}\leq N_{1}$ it follows that $1= K\cap N_{1}$ and $H\cap N_{1}= N_{1}$. So $H=KN_{1}$. We have that $H/K = KN_{1}/K \cong N_{1}\cong N_{1}N_{2}/N_{2}$ is a chief factor of $G/N_{2}$. Then $G/C_{G}(H/K)\cong G/N_{2}/C_{G/N_{2}}(N_{1}N_{2}/N_{2})$ by \cite[Appendix B, Theorems 1-3]{Wei}.

Since $G/N_{i}\in\mathfrak{Y}$, it follows that Statement (2) of Theorem~\ref{t3.2} holds for $G/N_{i}$, $i=1, 2$. Then Statement (2) of Theorem~\ref{t3.2} is valid for $G$ in view of 1 and 2 proved above. By (1) of Theorem~\ref{t3.2} $G\in\mathfrak{Y}$; a contradiction. Thus  $\mathfrak{Y}$ is a hereditary formation and Statement (2) of Theorem is proven.

(3) Suppose that $G\in\mathfrak{X}$. Since $\mathfrak{X}$ is a homomorph, we have that $G/\Phi(G)\in\mathfrak{X}$. Then Statement (2) of Theorem~\ref{t3.1} is satisfied for $G/\Phi(G)$. Since any chief factor of $G/\Phi(G)$ is complemented, Statement (2) of Theorem~\ref{t3.2} is true for $G/\Phi(G)$. By Statement (1) of Theorem~\ref{t3.2} $G/\Phi(G)\in\mathfrak{Y}$.

If $G/\Phi(G)\in\mathfrak{Y}$, then $G/\mathrm{Core}_{G}(M)\in\mathfrak{Y}\subseteq\mathfrak{X}$ for any maximal subgroup $M$ of $G$. By (1) $G\in\mathfrak{X}$.
\end{proof}

\textbf{Proof of Theorem \ref{t3.5}.}

\begin{proof}
By Lemma \ref{l3.1} $\mathfrak{F}=(G : $  every Sylow subgroup of $G$ is $k$-submodular in $G)$ is a hereditary formation.
Since $\mathfrak{K}=\mathfrak{U}\cap\mathfrak{F}$ we have that $\mathfrak{K}$ is a hereditary formation.

Let us prove that $\mathfrak{K}$ is saturated.
Let $G$ be a group of minimal order for which $G/\Phi(G)\in\mathfrak{K}$ and $G\not\in\mathfrak{K}$. Then there exists a Sylow $q$-subgroup $Q$ of $G$ that is not $k$-submodular in $G$. From $G/\Phi(G)\in\mathfrak{U}$ it follows that $G\in\mathfrak{U}$. Then a Sylow $p$-subgroup is normal in $G$ for the largest prime $p\in\pi(G)$.
Let $N$ be a minimal normal subgroup of $G$.  By \cite[Theorem~A.9.2(e)]{DH}  $\Phi(G)N/N\leq \Phi(G/N)$. Since $G/\Phi(G)N\in\mathfrak{K}$, we have that $(G/N)/\Phi(G/N)\in\mathfrak{K}$. From $|G/N| < |G|$, it follows
$G/N\in\mathfrak{K}$. Then $N$ is a unique minimal normal subgroup of $G$, $N\leq \Phi(G)\leq F(G)$ and $F(G)\in\mathrm{Syl}_{p}(G)$. We note that $|N|=p$. Since $QN/N\in\mathrm{Syl}_{q}(G/N)$ and $QN/N\nleqslant\Phi(G/N)$, we have that $QN/N\not=N/N$ and $QN/N$ is $k$-submodular  in $G/N$. By Lemma~\ref{l2.6}(2) $QN$ is $k$-submodular in $G$. Then $Q\ntrianglelefteq QN$ and $q\not= p$ by choice of $Q$.

Let $B=QN$. Then $T=\mathrm{Core}_{B}(Q)\not=Q$. We have that $B/T=NT/T\cdot Q/T$ and $C_{B/T}(NT/T)=NT/T$. Therefore $Q/T\cong B/T/C_{B/T}(NT/T)$ is isomorphic to a subgroup from $Z_{p-1}$. Then $|Q/T|=q^{n}$ for some $n\in\mathbb{N}$.

1. Assume that $G/N = QF(G)/N$. By Lemma~\ref{l2.7} $G/N\in\mathrm{w}\mathfrak{U}_{k}$.
By Lemma~\ref{l1.6}(3) 
$G/N/F(G/N)\in\mathcal{A}_{k}$. Since $F(G/N)=F(G)/N$ we have that $Q\cong G/F(G)\in\mathcal{A}_{k}$. So $n\leq k$. By Lemma~\ref{l2.2} $Q$ is $k$-submodular in $QN$. By Lemma~\ref{l2.4}(2) $Q$ is $k$-submodular in $G$; a contradiction.

2. Assume that $G/N \not= QF(G)/N=H/N$.  From the hereditary $\mathfrak{K}$
it follows $H/N\in\mathfrak{K}$. By Lemma~\ref{l2.7} $H/N\in\mathrm{w}\mathfrak{U}_{k}$.
By Lemma~\ref{l1.6}(2) 
$\mathrm{w}\mathfrak{U}_{k}$ is a hereditary saturated formation. Since $H$ is a Hall subgroup in $G$, by \cite[Corollary 16.2.3]{Shem} from $H/H\cap\Phi(G)\in\mathrm{w}\mathfrak{U}_{k}$ it follows that $H\in\mathrm{w}\mathfrak{U}_{k}$. Then $B=QN\in\mathrm{w}\mathfrak{U}_{k}$.
By Lemma~\ref{l1.6}(3) 
$B/F(B)\in\mathcal{A}_{k}$. Then $Q/(F(B)\cap Q)\cong QF(B)/F(B)\in \mathcal{A}_{k}$. We note that $F(B)=N(F(B)\cap Q)$ and $(F(B)\cap Q)\leq T=\mathrm{Core}_{B}(Q)$. So $Q/T\in\mathcal{A}_{k}$ and $n\leq k$. By Lemmas~\ref{l2.2} and \ref{l2.4}(2) $Q$ is $k$-submodular in $G$; a contradiction.
This $\mathfrak{K}$ is saturated.

Let us prove that $LF(h)= \mathfrak{K}$.
By \cite[Proposition~IV.3.14 and Theorem~IV.4.6]{DH} $ LF(h)$ is a hereditary saturated formation.

Show that $LF(h)\subseteq \mathfrak{K}$. Let $G$ be a group of minimal order in $ LF(h)\setminus\mathfrak{K}$. Let $N$ be a minimal normal subgroup of $G$. By choice of $G$ we have that $G/N\in\mathfrak{K}$, $\Phi(G)=1$ and $N$ is a unique minimal normal subgroup of $G$, since $ LF(h)$ and $\mathfrak{K}$ are saturated formation.
From $h(p)\subseteq\mathfrak{A}(p-1)$ it follows that $G\in\mathfrak{U}$, so $|N|=p$ for some prime $p$.
 From $G\not\in\mathfrak{K}$ it follows that $G$ has a Sylow $q$-subgroup $Q$ that is not $k$-submodular in $G$. Then $QN/N\in\mathrm{Syl}_{q}(G/N)$. From $G/N\in\mathfrak{K}$  by Lemma~\ref{l2.6}(2) we have that $QN$ is $k$-submodular in $G$. So $Q$ is not normal in $QN$ and $q\not=p$.

If $G\not=QN$, then $QN\in LF(h)$ since $LF(h)$ is hereditary. By choose of $G$ $QN\in \mathfrak{K}$. Thus $Q$ is $k$-submodular in $QN$ and $Q$ is $k$-submodular in $G$; a contradiction.

Let $G=QN$. Since $\mathrm{Core}{G}(Q)=1$ and $G\in LF(h)$ we have that $Q\cong G/N=G/C_{G}(N)\in h(p)$. Therefore $Q\in \mathfrak{A}(p-1)_{k}$ and $Q$ is cyclic. Then $Q$ is $k$-submodular in $G$; a contradiction. Thus $ LF(h)\subseteq \mathfrak{K}$.

Prove that $\mathfrak{K}\subseteq LF(h)$.
Let $G$ be a group of minimal order in $\mathfrak{K}\setminus LF(h)$. We denote by $N$ a minimal normal subgroup of $G$. Then $G/N\in\mathfrak{K}$. Since $\mathfrak{K}$ and $ LF(h)$ are saturated formations, we have that $\Phi(G) = 1$ and $N$ is a unique minimal normal subgroup of $G$. Then $G = NM$, where $M$ is a maximal subgroup of $G$, $\mathrm{Core}_{G}(M)=1$, $M\cap N=1$ and $N = C_{G}(N)$. From $G\in\mathfrak{K}\subseteq\mathfrak{U}$ it follows that $|N|=p$ for some prime $p$. Then $M\cong G/C_{G}(N)$ is isomorphic to a subgroup from $Z_{p-1}$. Let $R\in\mathrm{Syl}_{q}(M)$.

If $G\not= RN=D$ then $D\in \mathfrak{K}$, since $\mathfrak{K}$ is hereditary. By choose of $G$, $D\in LF(h)$. So $R\cong D/C_{D}(N)\in h(p)$, since $N=C_{D}(N)$. Thus $R\in\mathfrak{A}(p-1)_{k}$. So $G/C_{G}(N)\cong M\in h(p)$ and $G\in LF(h)$; a contradiction.

Let $G= RN$. Then $M^{x}=R$ for some $x\in G$.  Since $G\in\mathfrak{K}$, $R$ is $k$-submodular in $G$. From maximality of $R$ in $G$ it follows that $R$ is $n$-modularly embedded in $G$ for some natural $n\leq k$. So $G/C_{G}(N)\in h(p)$ and $G\in LF(h)$; a contradiction. Thus $\mathfrak{K}\subseteq LF(h)$. This means that the equality $\mathfrak{K}= LF(h)$ is proven.
\end{proof}

\textbf{Proof of Theorem \ref{t3.6}.}

\begin{proof}
By Lemma \ref{l3.1} $\mathfrak{F}$ is a hereditary formation.

Prove that $\mathfrak{F}$ is saturated. Let $G$ be a group of least order such that $G/\Phi(G)\in\mathfrak{F}$ and $G\not\in\mathfrak{F}$. Since $\mathfrak{F}\subseteq\mathrm{w}\mathfrak{U}$ and by Lemma \ref{l1.5}(2) $\mathrm{w}\mathfrak{U}$ is saturated formation, we have that $G\in\mathrm{w}\mathfrak{U}$. So $G$ is soluble. Let $N$ be a minimal normal subgroup of $G$. By choose of $G$, it follows that $G/N\in\mathfrak{F}$. Then $N$ is a unique minimal normal subgroup of $G$. Hence, $N\leq \Phi(G)< F(G)$ and $F(G)$ is $p$-group for some prime $p$. Since $G$ is Ore dispersive, we have that $p$  is the greatest prime divisor of $|G|$ and $F(G)\in\mathrm{Syl}_{p}(G)$. From $G\not\in\mathfrak{F}$ it follows that $G$ has a Sylow $q$-subgroup $Q$ which is not $k$-submodular in $G$. Since $G/N\in\mathfrak{F}$ we have that $QN/N$ is $k$-submodular in $G/N$. By Lemma 2.6(3) $QN$ is $k$-submodular in $G$. So $q\not=p$ and $\mathrm{Core}_{QN}(Q)\not= Q$.

1. $|\pi(G)| = 2$. Then $|\pi(G/N)| = 2$ and
$G/N = QN/N \cdot F(G)/N$. By Lemma~\ref{l3.1}(1) $G/N\in\mathrm{w}\mathfrak{U}$.  So $QN/N$ and $F(G)/N$ are $\mathbb{P}$-subnormal in $G/N$.
 By Lemma~\ref{l1.4} 
$G/N\in\mathfrak{U}$. So $G/N\in\mathfrak{K}$. By Theorem~\ref{t3.5} $\mathfrak{K}$ is saturated. Then $G\in\mathfrak{K}\subseteq\mathfrak{F}$; a contradiction.

2. $|\pi(G)| > 2$. Then $QN/N$ is contained in a Hall $\{p, q\}$-subgroup $H/N$ of $G/N$ and $H/N = QN/N \cdot F(G)/N$.
By the hereditary of $\mathfrak{F}$ $H/N\in\mathfrak{F}$. Therefore  $QN/N$ and $F(G)/N$ are $k$-submodular in $H/N$.
By Lemma~\ref{l1.4} 
$H/N\in\mathfrak{U}$, so  $H/N\in\mathfrak{K}$. Since by Theorem~\ref{t3.5} $\mathfrak{K}$ a local formation and $H = QF(G)$ is a Hall $\{p, q\}$-subgroup of $G$, we can apply Corollary 16.2.3 of \cite{Shem}. From $H\Phi(G)/\Phi(G)\cong H/H \cap \Phi(G)\cong H/N/H\cap\Phi(G)/N \in \mathfrak{K}$ it follows that $H\in \mathfrak{K}$. Then $QN\in \mathfrak{K}$ and $Q$ is $k$-submodular in $QN$. So $Q$ is
$k$-submodular in $G$. This contradiction completes the proof that $\mathfrak{F}$ is saturated.

Let us prove that $LF(f)= \mathfrak{F}$.

It is easy to verify that $f(p)$ is a hereditary formation. By \cite[Proposition~IV.3.14 and Theorem~IV.4.6]{DH} $LF(f)$ is hereditary saturated formation.
By Lemma~\ref{l1.5}
$LF(f)\subseteq\mathrm{w}\mathfrak{U}$ and $LF(f)$ consists of Ore dispersive groups.

Show that $LF(f)\subseteq \mathfrak{F}$. Let $G$ be a group of least order in $LF(f)\setminus \mathfrak{F}$. Since $LF(f)$ and $\mathfrak{F}$ are saturated formations, we conclude that $G$ has a unique minimal normal subgroup $N$ and $\Phi(G) = 1$. From $G\in\mathfrak{S}$ it follows that $N = C_{G}(N)$ is an elementary abelian $p$-group for some prime $p$. The Ore dispersiveness of $G$ implies that $N\in \mathrm{Syl}_{p}(G)$ and $p$ is the greatest prime divisor of $|G|$. Since $G\not\in\mathfrak{N}$, $|\pi(G)|\geq 2$.

If $QN\not=G$ for all $Q\in\mathrm{Syl}_{q}(G)$, $q\not= p$, then from $QN\in LF(f)$ it follows that  $QN\in\mathfrak{F}$. So $Q$ is $k$-submodular in $QN$. Since $QN/N\in\mathrm{Syl}_{q}(G/N)$ and $G/N\in\mathfrak{F}$, we conclude that $QN/N$ is $k$-submodular in $G/N$. By Lemma~\ref{l2.6}(2) $QN$ is $k$-submodular in $G$. So $Q$ is $k$-submodular in $G$. Hence $G\in\mathfrak{F}$; a contradiction.

Let $QN=G$ for some $Q\in\mathrm{Syl}_{q}(G)$. Then $Q$ is a maximal subgroup of $G$ and $\mathrm{Core}_{G}(Q)=1$. From $G\in LF(f)$ it follows that $Q\cong G/C_{G}(N)\in f(p)$. So $Q\in\mathfrak{A}(p-1)_{k}$ and $Q$ is cyclic.

Since $G\in LF(f)\subseteq\mathrm{w}\mathfrak{U}$ it follows that $Q$ is $\mathbb{P}$-subnormal in $G$, we have that $|G:Q|=p$. By Lemma~\ref{l2.2} $Q$ is $k$-submodular in $G$. Hence $G\in\mathfrak{F}$ is a contradiction to the choice of $G$. Thus  $LF(f)\subseteq \mathfrak{F}$.

Prove that $\mathfrak{F}\subseteq LF(f)$. Let $G$ be a group of least order in $\mathfrak{F}\setminus LF(f)$.
Since $\mathfrak{F}$ and $LF(f)$ are saturated formations, we have that $G/N\in LF(f)$, $G$ has a unique minimal normal subgroup $N$, $\Phi(G) = 1$. Then $G=NM$, where $M$ is a maximal subgroup of $G$, $N\cap M = 1$, $N = C_{G}(N)$ is an $p$-group for some prime $p$, because $G\in\mathfrak{S}$.
From $G\in\mathrm{w}\mathfrak{U}$ we conclude that $G$ is Ore dispersive. Then $p$ is the greatest prime divisor of $|G|$ and $N\in \mathrm{Syl}_{p}(G)$.
From $G\not\in LF(f)$ it follows that $G/C_{G}(H/K)\not\in f(p)$ for some chief factor $H/K$ of $G$. Then $H=N$ and $K=1$ because $G/N\in LF(f)$. So $G/C_{G}(N)\not\in f(p)$.

Let $R\in\mathrm{Syl}_{q}(M)$. Then $R\in\mathrm{Syl}_{q}(G)$.

Suppose that $D=RN\not= G$. Since $N\leq F(D)$, $F(D)=(R\cap F(D))N$. So $R\cap F(D)\in\mathrm{Syl}_{q}(F(D)$.
Then $R\cap F(D)\leq C_{D}(N)\leq C_{G}(N)=N$ and $F(D)=N$. From $D\in\mathfrak{F}\subseteq\mathrm{w}\mathfrak{U}$ and
Lemma~\ref{l1.4} 
we have that $D\in\mathfrak{K}$.
By Theorem~\ref{t3.5} $D/C_{D}(L/T)\in h(p)$ for every chief factor $L/T$ of $D$. By \cite[Corollary 4.1.1]{Shem} $F(D)=\cap C_{D}(L/T)$ for every chief factor $L/T$ of $D$.  So $R\cong D/N\in h(p)$. Then $G/C_{G}(N)=G/N\cong M\in f(p)$; a contradiction.

Suppose that $RN = G$.
By Lemma~\ref{l1.4} 
$G\in\mathfrak{U}$. Then $|N|=p$ and $R\cong G/N=G/C_{G}(N)\cong L\leq Z_{p-1}$. From $G\in\mathfrak{F}$ and $R=M$ it follows that $R$ is $n$-modularly embedded in $G$, $n\leq k$. Therefore $G/C_{G}(N)\cong R\in \mathfrak{A}(p-1)_{k}$ and $G/C_{G}(N)\in f(p)$; a contradiction.

 Hence $\mathfrak{F}\subseteq LF(f)$. It has been proven that $\mathfrak{F}= LF(f)$.
\end{proof}

\textbf{Proof of Proposition \ref{p3.1}.}

\begin{proof}
(1) Since $\mathfrak{K}\subseteq\mathfrak{U} $, by Lemma~\ref{l2.7} $\mathfrak{K}\subseteq\mathfrak{U}\cap\mathrm{w}\mathfrak{U}_{k}$.

Show that $\mathfrak{U}\cap\mathrm{w}\mathfrak{U}_{k}\subseteq \mathfrak{K}$.
Let $G$ be a group of minimal order in
$\mathfrak{U}\cap\mathrm{w}\mathfrak{U}_{k}\setminus \mathfrak{K}\not=\varnothing$.
Let $N$ be a minimal normal subgroup of $G$. Then $|N|=p$ for some prime $p$.
From $G\not\in\mathfrak{K}$ it follows that there exists a Sylow $q$-subgroup
$Q$ of $G$ that $Q$ is not $k$-submodular in $G$. Since $QN/N\in\mathrm{Syl}_{q}(G/N)$ and $G/N\in\mathfrak{K}$ we have that $QN$ is $k$-submodular in $G$ by Lemma~\ref{l2.6}(3). If $QN\not= G$ then from $QN\in \mathfrak{U}\cap\mathrm{w}\mathfrak{U}_{k}$ it follows that $QN\in\mathfrak{K}$. By Lemma~\ref{l2.4}(2), $Q$ is $k$-submodular in $G$; a contradiction.

Let $G=QN$. Then $q\not=p$. Since $\mathfrak{U}\cap\mathrm{w}\mathfrak{U}_{k}$ and $\mathfrak{K}$ are hereditary saturated formations we have that $N$ is a unique minimal normal subgroup of $G$ and $\Phi(G)=1$.
Then  $\mathrm{Core}_{G}(Q)=1$. By Lemma~\ref{l1.1} $N=C_{G}(N)=F(G)$. From $G/C_{G}(N)\cong W\leq Z_{p-1}$ it follows that $Q$ is cyclic and $|Q|=q^{n}$ for some $n\in\mathbb{N}$.
By Lemma~\ref{l1.6}(3)
$Q\cong G/F(G)\in\mathcal{A}_{k}$. Therefore $n\leq k$ and $Q$ is $k$-submodular in $G$; a contradiction. Thus $\mathfrak{U}\cap\mathrm{w}\mathfrak{U}_{k}\subseteq\mathfrak{K}$. This means that the equality $\mathfrak{K}=\mathfrak{U}\cap\mathrm{w}\mathfrak{U}_{k}$ is proven.

(2) Note that $\mathfrak{F}$ and $\mathrm{w}\mathfrak{K}$ consist of soluble groups. Suppose that $\mathfrak{F}\setminus\mathrm{w}\mathfrak{K}\not=\varnothing$, and choose a group $G$ of minimal order in $\mathfrak{F}\setminus\mathrm{w}\mathfrak{K}$. Since $\mathfrak{F}$ is a formation, we have that $G/N\in\mathrm{w}\mathfrak{K}$ for any minimal normal subgroup $N$ of $G$. By Lemma~\ref{l1.10}, $\mathrm{w}\mathfrak{K}$ is a saturated formation. Therefore $N$ is a unique minimal normal subgroup of $G$ and $\Phi(G)=1$. Let $H\in\mathrm{Syl}_{q}(G)$ such that $H$ is not $\mathfrak{K}$-subnormal in $G$. Since $HN/N\in\mathrm{Syl}_{q}(G/N)$ we have that $HN/N$ $\mathfrak{K}$-$sn$ $G/N$. By Lemma~\ref{l1.8}(2) $HN$ $\mathfrak{K}$-$sn$ $G$. Then $\mathrm{Core}_{G}(H)=1$ and $q\not= p$ for $p\in\pi(N)$. If $HN\not=G$ then $HN\in\mathrm{w}\mathfrak{K}$. By Lemma~\ref{l1.8}(1) $H$ $\mathfrak{K}$-$sn$ $G$; a contradiction. Let $HN=G$. Then in $G$, $H$ is maximal and $k$-submodular. By Definitions~\ref{d1}, \ref{d2}, $|N|=|G:H|=p$ and $|G|=|G/\mathrm{Core}_{G}(H)|=pq^{n}$ for some natural number $n\leq k$. Thus $G\in\mathfrak{K}$ and $G^{\mathfrak{K}}=1$. By Lemma~\ref{l1.9}(1) $H$ $\mathfrak{K}$-$sn$ $G$; a contradiction. Thus $\mathfrak{F}\subseteq\mathrm{w}\mathfrak{K}$.

Now suppose that $\mathrm{w}\mathfrak{K}\setminus\mathfrak{F}\not=\varnothing$, and choose a group $G$ of minimal order in $\mathrm{w}\mathfrak{K}\setminus\mathfrak{F}$. By Lemma~\ref{l1.10} $\mathrm{w}\mathfrak{K}$ is hereditary. By the choose $G$, $\mathfrak{F}$ contains every proper subgroup of $G$, $G/N\in\mathfrak{F}$ where $N$ is a unique minimal normal subgroup of $G$, and $\Phi(G)=1$. Let $H\in\mathrm{Syl}_{q}(G)$ such that $H$ is not $k$-submodular in $G$. From choose of $G$ and Lemmas~\ref{l2.6}(3), \ref{l2.4}(2) it follows that $HN=G$. From the maximality $H$ in $G\in\mathrm{w}\mathfrak{K}$ we have that $G^{\mathfrak{K}}\leq\mathrm{Core}_{G}(H)=1$. Then $G\in\mathfrak{K}\subseteq\mathfrak{F}$; a contradiction. Hence $\mathrm{w}\mathfrak{K}\subseteq\mathfrak{F}$. It follows that $\mathrm{w}\mathfrak{K}=\mathfrak{F}$.
\end{proof}

\textbf{Proof of Theorem  \ref{t3.6_1}.}

\begin{proof} Let $G$ be a counterexample of least order to the theorem. Then, by the Wielandt–Kegel theorem \cite[Theorem~VI.4.3]{Hup}, $G$ is soluble. Let $N$ be a minimal normal subgroup of $G$. Then $AN/N\cong A/A\cap N\in\mathfrak{N}$ and $BN/N\cong B/B\cap N\in\mathfrak{N}$. By Lemma~\ref{l2.6}(1), $AN/N$ and $BN/N$ are $k$-submodular in $G/N$. The choice of $G$ implies that $G/N$ is supersoluble and every Sylow subgroup is $k$-submodular in $G/N$. By Theorem~\ref{t3.5}, we conclude that $N$ is the only minimal normal
subgroup of $G$ and $\Phi(G) = 1$. Then $G = MN$, where $M$ is a maximal subgroup of $G$, $M\cap N = 1$, $N = C_{G}(N)$ is a $p$-group for some prime $p$. By Lemma~\ref{l1.7} $A\cap B = 1$. From $N\leq A \cup B$ it
follows that either $N\leq A$ or $N\leq B$. Without loss of generality, we may assume that $N\leq A$. Then, by Lemma~\ref{l1.3}(3), $A$ is a $p$-subgroup and $B$ is a $p'$-subgroup.

Let $q$ be the greatest prime divisor of $|G|$.

If $q\not= p$ then $Q\leq B$ where $Q$ is some Sylow $q$-subgroup of $G$. From
$Q\unlhd B$ and the $k$-submodularity of $B$ in $G$ it follows that $Q$ is $k$-submodular in $G$. By Lemma~\ref{l2.8}  $Q\unlhd G$.
Then $N\leq Q$. We get a contradiction to $q\not= p$.

Thus  $q = p$. By Lemma~\ref{l2.8} $A\unlhd G$. By
Lemma~\ref{l1.1}(2) $O_{p}(M) = 1$. Then $M\cap A \leq O_{p}(M) = 1$ and $A = N\in\mathrm{Syl}_{p}(G)$, while $B$ is a maximal subgroup of $G$ and $\mathrm{Core}_{G}(B) = 1$. From Definitions~\ref{d1} and \ref{d2} it follows that $|G:B| =p$ and $|G|=|G/\mathrm{Core}_{G}(B)|=pq^{n}$ for some primes $p$ and $q$, $n\in\mathbb{N}$ and $n\leq k$. Therefore  $G$ is supersoluble and every Sylow subgroup is $k$-submodular in $G$; a contradiction with the choice of $G$. The theorem is proved.
\end{proof}

\medskip

This work was supported by the Ministry of Education of the Republic of Belarus (Grant no. 20211750"Convergence-2025").



T.\ I. Vasilyeva,

Department of Higher Mathematics,
Belarusian State University of Transport,
Gomel, Belarus.

E-mail: tivasilyeva@mail.ru


\end{document}